\title{A density-constrained model for Chemotaxis }
\author[I. Kim]{Inwon Kim}
\address{Department of Mathematics, UCLA,  Los Angeles, CA} 
\email{ikim@math.ucla.edu}
\author[A.Mellet]{Antoine Mellet}
\address{Department of Mathematics, University of  Maryland, College Park, MD}
\email{mellet@umd.edu}
\author[Y. Wu]{Yijing Wu}
\address{Department of Mathematics, University of  Maryland, College Park, MD}
\email{yijingwu@umd.edu}
\thanks{I. Kim was partially supported by NSF Grant DMS-1900804.\\
A. Mellet was partially supported by NSF Grant DMS-2009236.
}
\def\R{\mathbb R}
\def\P{\mathcal P}
\def\brho{\bar \rho}
\def\vphi{\varphi}
\def\pa{\partial}
\def\na{\nabla}
\def\div{\mathrm{div}\,}
\def\supp{\mathrm{Supp}\,}
\def\H{\mathcal H}
\numberwithin{equation}{section}
\newtheorem{theorem}{Theorem}[section]
\newtheorem{theorem*}{Theorem}
\newtheorem{remark}[theorem]{Remark}
\newtheorem{lemma}[theorem]{Lemma}
\newtheorem{proposition}[theorem]{Proposition}
\newtheorem{corollary}[theorem]{Corollary}
\newtheorem{definition}[theorem]{Definition}
\begin{document}

\begin{abstract}
We consider a model of congestion dynamics with chemotaxis: The density of cells follows a chemical signal it generates, while subject to an incompressibility constraint. The incompressibility constraint results in the formation of {\it patches}, describing regions where the maximal density has been reached. The dynamics of these patches can be described by either Hele-Shaw or Richards equation type flow (depending on whether we consider the model with diffusion or the model with pure advection). 
Our focus in this paper is on the construction of weak solutions for this problem via a variational discrete time scheme of JKO type. We also establish the uniqueness of these solutions.
In addition, we make more rigorous the connection between this incompressible chemotaxis model and the free boundary problems describing the motion of the patches in terms of the density and associated pressure variable. In particular, we obtain new results characterizing the pressure variable as the solution of an obstacle problem and prove that in the pure advection case the dynamic preserves patches.


\end{abstract}

\maketitle

\section{Introduction}
\subsection{Motivations and Overview of the paper's objectives}
The classical parabolic-elliptic Patlak-Keller-Segel model for chemotaxis is:
$$
\begin{cases}
\pa_t \rho - \mu \Delta \rho +\chi \div(\rho \na \phi)=0 , & \mbox{ in } \Omega\times (0,\infty)\\
 \eta  \Delta \phi  +\theta \rho - \sigma\phi =0  & \mbox{ in } \Omega\times (0,\infty) \end{cases}
$$
where $\rho$ denotes the cell density and $\phi$ the concentration of some chemical. The parameters $\mu$ and $\eta$ are the cell and chemical diffusivity, $\chi$ is the cell sensitivity, and $\theta$ and $\sigma$ describe the production and degradation of the chemical (see e.g. \cite{Keller_Segel}, \cite{Patlak}, \cite{HP})

\medskip
In this model, the diffusion  competes with the aggregating potential $\phi$ and it is well known that such an aggregation-diffusion equation might lead to concentration and possibly finite time blow-up of the density, as the diffusion is not strong enough to balance the attractive potential (see e.g.  \cite{DP}, \cite{HV}).
In this paper we want to take into account the incompressibility of the cells in order to investigate the behavior of the density $\rho$ after saturation occurs.
One way to enforce a constraint $\rho\leq \rho_M $ is to consider the limit $m\to \infty$ of the nonlinear diffusion version of the PKS model:
\begin{equation}\label{eq:PME}
\begin{cases}
\pa_t \rho - \mu \Delta (\rho/\rho_M)^m + \chi \div(\rho \na \phi)=0 ,\\
 \eta  \Delta \phi  +\theta \rho - \sigma\phi =0 .
 \end{cases}
\end{equation}
This approach (stiff, or incompressible, limit of a porous media type equation) has been used in numerous papers, in particular in the context of tumor growth (\cite{PQV},\cite{MPQ}, \cite{KP}, \cite{DS}) and chematoxis (\cite{CKY}, \cite{HLP}). 
Note that in this limit, the very strong cell diffusion in the region $\{\rho>\rho_M\}$ enforces the constraint $\rho\leq \rho_M$, while the diffusion vanishes in the region $\{\rho<\rho_M\}$.

\medskip

In this paper, we directly address the incompressible problem, without the intermediary \eqref{eq:PME}, using an approach that has been successfully used, in particular, in the study of congested crowd motion (see \cite{MRS,S_survey,MeSan16}). Formally, the idea is to project the desired velocity $ v=- \mu \na  \log \rho +\chi \na \phi $ 
onto the set $C(\rho)$ of admissible velocities, which preserve the constraint $\rho\leq\rho_M$:
$$
\begin{cases}
\pa_t \rho + \div (\rho P_{C(\rho)}(- \mu \na  \log \rho +\chi \na \phi )  ) =0, \qquad\rho\leq \rho_M \\
 \sigma \phi -  \eta\Delta \phi = \rho.
\end{cases}
$$
Definitions of the admissible set $C(\rho)$ and the projection operator $P_{C(\rho)}$, which will not be used in our subsequent analysis, are recalled in Appendix \ref{sec:projection}.
Since the term $- \mu \na  \log \rho$ cannot increase the maximum value of $\rho$, we can also write 
$
P_{C(\rho)}(- \mu \na  \log \rho +\chi \na \phi )=  - \mu \na  \log \rho+ P_{C(\rho)}( \chi \na \phi )
$ (see Remark \ref{rmk:projk}).
For convenience, we replace $\rho$ with $\rho_M \rho$ and $\phi$ with $\chi \phi $ to arrive at the normalized version,
\begin{equation}\label{eq:transpcons}
\begin{cases}
\pa_t \rho -\mu \Delta \rho + \div(\rho P_{C(\rho)} (\na \phi))=0 , \qquad\rho\leq 1 \\
 \sigma \phi -  \eta\Delta \phi = \rho,
\end{cases}
\end{equation}
where we redefined the constants $\sigma$ and $\eta$ as necessary ($\eta / (\chi \rho_M \theta) \mapsto \eta$, $\sigma / (\chi \rho_M \theta) \mapsto \sigma$).
In the sequel, we will take $\sigma=1$ and $\eta=1$ since these constants do not play any role in our results.

\medskip

The goal of this paper is to construct weak solutions of \eqref{eq:transpcons} and study their properties (e.g. uniqueness and relation to free boundary problems) both when $\mu=0$ and when $\mu>0$. 
This analysis will lay the foundation for a subsequent paper in which we investigate the asymptotic behavior of the solutions when $\eta\ll1$ and $t\sim \eta^{-1}$.
Throughout our paper, $\Omega$ is a fixed bounded subset of $\R^d$ with smooth boundary and the equation will be supplemented by Neumann boundary conditions that guarantee conservation of mass for $\rho$, and general Robin boundary conditions for $\phi$ which account for possible degradation of the chemical along $\pa\Omega$ (these boundary conditions do not significantly affect the analysis developed in this paper, but play an important role in the singular limit $\eta\ll1$).

\medskip

A rigorous approach to \eqref{eq:transpcons} was developed in \cite{MRS} when $\mu = 0 $ and $\na \phi$ is a fixed velocity field by using variational methods (see also \cite{MeSan16} for the case $\mu>0$). The existence of a weak solution was proved via an appropriate JKO type scheme with constraint.
As we will see in this paper, we can  extend  this approach to our case. 
Following  \cite{MRS}, we will not  use the projection operator $P_{C(\rho)}$, but instead introduce the pressure $p$ such that (see Appendix \ref{sec:projection})
$$   P_{C(\rho)} (\na \phi) =  \na \phi  - \na p, \qquad p\geq 0, \quad p(1-\rho)=0 \mbox{ a.e.}.$$
While this pressure $p$ appears naturally as the limit of $p_m=\rho^{m-1}$ in the stiff limit of the porous media equation \eqref{eq:PME}, it appears, in this variational approach, as a Lagrange multiplier for the constraint $\rho\leq 1$ (like the pressure in incompressible fluid mechanics equations). It is thus not surprising that $p=0$ when $\rho\neq 1$, as guaranteed by the condition $ p(1-\rho)=0$ a.e.
The conditions on $p$ can also be expressed by writing $p\in P(\rho)$ with
\begin{equation}\label{eq:P} P(\rho):= \begin{cases}
0 & 0\leq \rho <1 \\
[0,\infty) &  \rho =1
\end{cases}
\end{equation}
which is sometimes referred to as the Hele-Shaw graph.
\medskip

Using the fact that $\rho \na p = \na p$, we obtain the following simple model for chemotaxis in incompressible settings:
\begin{equation}\label{eq:weakHS}
\begin{cases}
\pa_t \rho  -\mu \Delta \rho + \div( \rho \na \phi-\na p)=0 ,\qquad p\in P(\rho),\\
  \phi -  \Delta \phi = \rho .
 \end{cases}
\end{equation}
This equation is the main topic of this paper when $\mu\geq 0$. 
Of particular interest  is the evolution of the saturated region $\Omega_s(t) = \{ \rho(\cdot,t)=1\}$ and the free boundary $\Sigma(t) = \pa \Omega_s(t)$.
\medskip

When $\mu>0$, equation \eqref{eq:weakHS} is a weak formulation for the problem
$$
\begin{cases}
\rho\in(0,1), \quad p=0, \qquad \pa_t \rho -\mu \Delta \rho + \div(\rho\na \phi) =0  & \mbox{ in } \Omega\setminus \Omega_s(t) \\
\rho =1, \quad p>0, \qquad \Delta p =\Delta \phi & \mbox{ in } \Omega_s(t)
\end{cases}
$$
with the free boundary $\Sigma$ being determined by the conditions
$$ \rho=1, \quad p=0, \quad \mu \na \rho\cdot \nu|_{\Sigma_+} = \na p\cdot\nu|_{\Sigma_-}$$
where $\nu$ denotes the normal unit vector along $\Sigma$ (pointing outward of $\Omega_s$).
The last condition comes from the fact that $\Delta (\mu \rho+p)$ cannot have a singular part along $\Sigma$ in \eqref{eq:weakHS}. This problem can be seen as a two phase free boundary problem for the function $u=\mu(\rho-1)+p$, which solves:
\begin{equation}\label{eq:Rich}
\begin{cases}
 \pa_t u +\div(u\na \phi)   = \mu (\Delta u -\Delta\phi)   & \mbox{ in } \{u<0\}  \\
 \Delta u-\Delta \phi =0& \mbox{ in } \{u>0\}
\end{cases}
\end{equation}
with the zero jump conditions 
\begin{equation}\label{zero}
[u]= [\na u\cdot \nu]=0 \quad \hbox{ on } \pa\{u>0\}.
\end{equation}

 While the problem is similar to the {\it Richards equation} used in filtration models (see e.g. \cite{AL}), the discontinuity of the drift term across the interface makes it more singular than the standard problem.
 
   \medskip

When $\mu=0$, equation \eqref{eq:weakHS} is a weak formulation for the problem
\begin{equation}\label{eq:HS1}
\begin{cases}
\rho\in[0,1), \quad p=0, \qquad \pa_t \rho + \div(\rho\na \phi) =0  & \mbox{ in } \Omega\setminus \Omega_s(t) \\
\rho =1, \quad p>0, \qquad \Delta p =\Delta \phi & \mbox{ in } \Omega_s(t)
\end{cases}
\end{equation}
(the pressure $p$ is still continuous across $\Sigma$, but $\rho$ might have a jump) where
the free boundary now moves according to the velocity law
\begin{equation}\label{eq:HS2}
 (1-\rho|_{\Sigma_+}) V = (-\na p+\na\phi)\cdot \nu  |_{\Sigma_-}
 \end{equation}
($V$ denotes the normal velocity of $\Sigma(t)$).
When the density is a characteristic function of some set, that is $\rho (x,t)= \chi_{\Omega_s(t)}(x)$, we recognize the usual one phase Hele-Shaw problem without surface tension:
\begin{equation}\label{eq:introHS}
\begin{cases}
\Delta p = \Delta \phi \mbox{ in } \Omega_s(t), \qquad p=0 \mbox{ on } \pa \Omega_s(t)\\
V  = (-\na p+\na\phi)\cdot \nu   \quad \mbox{ on } \pa \Omega_s(t).
\end{cases}
\end{equation}

In other words, in this fully  saturated regime, the chemotaxis system \eqref{eq:weakHS} 
 can be seen as a free boundary problem describing the motion of the region occupied by the cell, driven by the chemical concentration $\phi$ and the pressure variable $p$. 
However, since we obtained \eqref{eq:weakHS} by imposing the constraint $\rho\leq1 $, but without requiring $\rho\in\{0,1\}$, it is not clear that we should actually have  $\rho (x,t)= \chi_{\Omega_s(t)}(x)$ in general.
Using the fact that the potential $\phi$ is attractive (more precisely, the fact that $-\Delta\phi =\eta^{-1}( \rho -   \phi) \geq 0$ in $\{\rho=1\}$), we will  show that 
 if $\rho$ is a characteristic function at $t=0$, then this remains true for all $t>0$.

\medskip

 The transition from \eqref{eq:Rich}-\eqref{zero} to \eqref{eq:introHS} as $\mu\to 0$ is interesting, particularly due to the emergence of the free boundary velocity law in the limit. This connection between the Richards-type equation and the Hele-Shaw flow has been observed earlier in \cite{KPS} when the drift term was replaced by a growth term. Note that the convergence of $u$ to $p$ is only of zeroth order, due to the discontinuity of $\rho$ in the limit $\mu=0$ across the interface. This partially explains the abrupt change of the interface conditions from \eqref{zero} to the velocity law. Our analysis yields that $\nabla p$ strongly converges in the limit $\mu \to 0$ (Proposition~\ref{prop:obs}), serving as the regular part in the limit of the flux $\nabla u$.

 \medskip

Below is a brief summary of the main results of this paper:
\begin{enumerate}
\item Adapting the variational framework of \cite{MRS,MeSan16}, we construct weak solutions of \eqref{eq:weakHS} as limit of a discrete-time variational JKO scheme (\cite{JKO})  in  a bounded domain.
\item We show that for all $t>0$ the pressure $p(t)$ can be defined as the unique solution of a simple obstacle problem. This result is similar to a result obtained in \cite{GKM} in the context of tumor growth (without the potential $\phi$). The derivation of this obstacle problem relies on the variational nature of the JKO scheme and is thus very different from the proof presented in \cite{GKM}, which involved the porous media type equation \eqref{eq:PME}. But it does not require any technical a priori estimates and it yields the complementarity condition:
$$ p(\Delta p -\Delta \phi)=0.$$
We  also show that the discrete time approximation of the pressure, given by the variational scheme, is a subsolution of the same obstacle problem for all $t>0$.
\item Using a duality method, as in \cite{PQV}, we prove that \eqref{eq:weakHS} has a weak unique solution and thus fully characterizes the limit of the JKO scheme.

\item When $\mu=0$ (no diffusion) we show that 
 if $\rho$ is  initially a characteristic function, then this remains true for all $t>0$ (such a property cannot hold when $\mu>0$ because of the regularizing nature of diffusion). 
A similar result, when $\phi$ is a fixed potential satisfying $-\Delta\phi \geq 0$, was proved in \cite{AKY} (see also \cite{CKY}) using viscosity solution type arguments, based on the comparison principle that holds in this case. For our system the comparison principle for densities no longer hold.
We develop a very different, measure theoretic, approach which is simple and fits well with our notion of solutions.

\end{enumerate}
\medskip
\medskip

In a companion paper \cite{KMW2}, we will investigate the singular limit 
for this problem when $\phi$ solves $\phi -  \eta\Delta \phi = \rho $ with $\eta\ll1$.
We  then  show that at an appropriate time scale, the evolution of the saturated patches is described by  the Hele-Shaw free boundary problem with surface tension.

\subsection{Setting and notations}
We recall that $\Omega$ is a bounded subset of $\R^d$ with smooth boundary. Throughout the paper we denote the space-time domain $\Omega_T:= \Omega \times  (0,T))$ for any given $T>0$.

\medskip

We  introduce
\begin{equation}\label{eq:K}
K:=\left\{ \rho\in \P(\Omega), \; \rho(x)\leq 1 \mbox{ a.e. in }\Omega\right\}
\end{equation} 
where $\P(\Omega)$ denotes the set of probability measures on $\Omega$. In particular all $\rho\in K$ are absolutely continuous with respect to the Lebesgue measure and we can identify the measure with its density.
The set $K$ is equipped with the usual Wasserstein distance, defined by
$$
W_2^2(\mu,\nu) = \inf_{\pi \in \Pi(\mu,\nu)}  \int_{\Omega\times\Omega} |x-y|^2 d\pi(x,y)
$$
where $\Pi(\mu,\nu)$ denotes the set of all probability measures $\pi \in \mathcal P(\Omega\times\Omega)$ with marginals $\mu$ and $\nu$.
Given $\rho\in K$, the solution $\phi$ of 
\begin{equation}\label{eq:phi0}
\begin{cases}
 \phi -\Delta  \phi = \rho  &  \mbox{ in } \Omega\\
 \alpha \phi + \beta \nabla \phi \cdot n = 0 & \mbox{ on } \pa\Omega
\end{cases}
\end{equation}
(with $\alpha\geq 0$,  $\beta \geq 0$ and $\alpha+\beta>0$) 
can be expressed as
\begin{equation}\label{eq:phiG}
 \phi (x)=\int_\Omega G (x,y) \rho(y)\, dy
 \end{equation}
for some Green kernel $G (x,y): \Omega\times\Omega\to\R^d$.
Importantly, the Green kernel is not of the form $G(|x-y|)$. But since the equation \eqref{eq:phi0}
is self-adjoint, we have
$$
G(x,y)=G (y,x), \qquad \na_x G (x,y)=-\na_y G (y,x).
$$
In addition, the maximum principle applied to \eqref{eq:phi0}  gives 
\begin{equation}\label{eq:phimax} 
0\leq  \phi(x)\leq 1 \qquad \mbox{ in } \Omega,
\end{equation}
and multiplying \eqref{eq:phi0} by $\phi$ and integrating leads to the estimate
\begin{equation}\label{eq:phiH1}
 \| \phi \|_{L^2(\Omega)}^2+   \|\na \phi\|_{L^2(\Omega)}^2 \leq 1.
\end{equation}

Most results presented in this paper (with the exception of Theorem \ref{thm:charac})  are not specific to equation \eqref{eq:phi0} and hold for more general kernel $G$. We focus on equation \eqref{eq:phi0} because of its interest in several applications, most notably chemotaxis models. In addition, it would be straighforward to generalize our results to include an external potential $\phi_e\in L^\infty(0,\infty;W^{2,\infty}(\Omega))$ and replace \eqref{eq:phiG} with
\begin{equation}\label{eq:phiG1} 
 \phi (x)=\int_\Omega G (x,y) \rho(y)\, dy + \phi_e (x)
\end{equation}
(Theorem \ref{thm:charac} would then require the additional assumption that $-\Delta \phi_e>0$).

\medskip

We are studying the  following initial boundary value problem with $\mu\geq 0$:
\begin{equation}\label{eq:weak}
\begin{cases}
\pa_t \rho  -\mu \Delta \rho + \div( \rho \na \phi-\na p)=0 ,\qquad \mbox{ in } \Omega\times(0,\infty), \qquad p\in P(\rho)\\
( -\mu \na \rho + \rho \na \phi-\na p)\cdot n = 0, \qquad  \mbox{ on }\pa \Omega\times(0,\infty)\\
\rho(x,0) = \rho_{in}(x)\qquad  \mbox{ in } \Omega
\end{cases}
\end{equation}
with $\phi$ solution of \eqref{eq:phi0}. 
The Neumann boundary condition is natural and guarantees that $\int_\Omega \rho(t)\, dx$ is preserved. 
We recall (see the introduction) that the pressure $p(x,t)$, as in the equations of incompressible fluid mechanics, is a Lagrange multiplier for the incompressibility constraint $\rho\leq 1$. In particular (see \eqref{eq:P})
 the condition $p\in P(\rho)$ implies $\rho\leq 1$, $p\geq 0$ and $p(1-\rho)=0$ a.e.  in  $\Omega\times(0,\infty)$.
\medskip




We will use the following natural definition of weak solutions of \eqref{eq:weak}. 
\begin{definition}\label{def:weak}
The pair of functions $(\rho,p)$ is a weak solution of \eqref{eq:weak}  if 
$\rho \in L^1(0,\infty;L^\infty(\Omega)) \cap C^{1/2}(0,\infty;\P(\Omega))$, $p\in L^2(0,\infty;H^1(\Omega))$ with
$$0\leq \rho\leq 1,\quad  p\geq 0, \quad (1-\rho) p=0 \quad\mbox{ a.e. in } \Omega_T$$
and the followings hold:
\begin{equation}\label{eq:weak11}
\int_\Omega \rho_{in} (x) \zeta(x,0)\, dx + \int_0^\infty \int_\Omega (\rho\, \pa_t\zeta + \rho v \cdot \na \zeta)\, dx dt = 0 
\end{equation}
for any function $\zeta\in C^\infty_c(\overline \Omega\times [0,\infty))$ and for some $v\in (L^2(d\rho))^d$ satisfying 
\begin{equation}\label{eq:weak12}
\int_0^\infty \int_\Omega  (\rho v\cdot \xi - \rho\na \phi \cdot \xi -\mu\rho\, \div\xi - p\, \div \xi) \, dx \, dt= 0
\end{equation}
for any vector field  $\xi \in C^\infty_c(\overline \Omega\times (0,\infty) ; \R^d)$ such that $\xi \cdot n=0$ on $\pa\Omega$ and with $\phi$ given by \eqref{eq:phiG}.
\end{definition}

Here we denote  $L^2(d\rho) :=\{f:  \int_0^\infty \int_\Omega  |f(x,t)|^2 \rho(x,t)\, dx\, dt<\infty\}$. 
\medskip

Equation \eqref{eq:weak11} is the usual weak formulation for the continuity equation $\pa_t \rho + \div (\rho v)=0$ with Neumann boundary conditions and initial condition $\rho_{in}$.  \eqref{eq:weak12} is equivalent to $\rho v =  -\mu\na \rho+\rho \na \phi - \na p$ in $L^{2}(\Omega_T)$.
Recall that $\P(\Omega)$ is equipped with the Wasserstein distance $W_2$, so the condition
$\rho \in C^{1/2}(0,\infty;\P(\Omega))$ means that $W_2(\rho(t),\rho(s)) \leq C|t-s|^{\frac1 2 }$. In view of Lemma \ref{lem:tech}, this implies also that $\rho \in C^{1/2}(0,\infty;H^{-1}(\Omega))$.

\medskip

\subsection{The JKO scheme}
The key tool to prove the existence of weak  solutions in the sense of Definition~\ref{def:weak} is the fact that  \eqref{eq:weak} is the gradient flow - or minimizing movement scheme - with respect to the Wasserstein distance $W_2$ for the functional
$$ J (\rho)= \mu \int \rho \log\rho \, dx - \frac 1 {2}  \int_\Omega \rho\, \phi  \, dx  = \mu \int \rho \log\rho \, dx - \frac 1 {2 }  \int_\Omega \int_\Omega G (x,y)\rho(x)\rho(y)\, dy\, dx $$
with the constraint $\rho\leq 1$ (which will be enforced by requiring that $\rho\in K$, with $K$ defined by \eqref{eq:K}).

\medskip

The idea of the JKO scheme is to construct a time-discrete approximation of the solution by successive applications of a minimization problem. More precisely, for a given initial data $\rho_{in}\in K$,  we fix a time step $\tau>0$ (destined to go to zero) and define the sequence $\rho^n $ by:
\begin{equation}\label{eq:JKO}
\rho^0=\rho_{in} , \qquad  \rho^n \in \mathrm{argmin}\left\{  \frac {1}{2\tau} W_2^2(\rho,\rho^{n-1}) + J(\rho)\, ;\, \rho\in K\right\} \qquad \forall n\geq 1.
\end{equation}

The fact that this problem has a minimizer will be proved in Proposition \ref{prop:minchar} (and the minimizer is unique if $\tau$ is small enough). Furthermore,  if $T^n$ is the unique optimal transport map from $\rho^n$ to $\rho^{n-1}$ (that is $T^n\#\rho^n=\rho^{n-1}$ and $W_2^2(\rho^n,\rho^{n-1}) = \int|x-T^n(x)|^2 \rho^n dx$), we can define the velocity
$$ v^n(x) =\frac{x-T^n(x)}{\tau}$$
and the pressure variable $p^n(x)$ such that
$$ 
\rho^n v^n =-\mu \na \rho^n + \rho^n \na \phi^n - \na p^n, \qquad p^n \in H^1_{\rho^n}
$$
(the existence of $p^n$ will be proved in Proposition \ref{prop:pressure}) with $\phi^n = \int_\Omega G(x,y)\rho^n (y)\, dy.$

\medskip

We can then define the piecewise constant function $\rho^{\tau},p^{\tau}:[0,T]\mapsto P(\Omega)$ by
\begin{equation}\label{eq:interrhop}
\begin{array}{lll}
\rho^{\tau}(t)&:=\rho^{n+1} &\text{ for all }  t\in[n\tau,(n+1)\tau)\\
p^{\tau}(t)& :=p^{n+1} &  \text{ for all }  t\in[n\tau,(n+1)\tau).
\end{array}
\end{equation}
Our goal is now to characterize the limit of these functions when $\tau\to0$ and  prove the existence of weak solutions to \eqref{eq:weak}-\eqref{eq:phi0})

\subsection{Main results}
Our first theorem concerns the limit $\tau\to0$ and proves in particular the existence of a weak solution of  \eqref{eq:weak}:

\begin{theorem}[Convergence when $\tau\to0$]\label{thm:conv}
Fix $T>0$ and $\mu\geq 0$. Let $\tau_k$ be a sequence such that $\tau_k\to 0$.
For any initial condition $\rho_{in} \in K$, there exists a subsequence, still denoted $\tau_k$, such that
the interpolations $\rho^{\tau_k }$ defined by \eqref{eq:interrhop}  converge uniformly in $[0,T]$ with respect to $W_2$ to $\rho $  and $p^{\tau_k }$ converges weakly in $L^2(0,T;H^1(\Omega))$ to $p$ where $(\rho ,p )$ is a weak solution of \eqref{eq:weak} in  the sense of Definition \ref{def:weak}.
Furthermore, $\rho$ satisfies the energy inequality
\begin{equation}\label{eq:energy}
J(\rho(t)) + \int_0^t \int_\Omega |v|^2 d\rho \leq J(\rho_{in}) \quad \forall t>0
\end{equation}
with $v$ defined in Definition \ref{def:weak}.
\end{theorem}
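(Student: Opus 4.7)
The plan is to follow the standard JKO limit-passage strategy, adapted to incorporate the incompressibility constraint and the chemotactic coupling. First I would derive the \emph{a priori estimates}: using $\rho^{n-1}$ as a competitor in the variational step \eqref{eq:JKO} yields
\begin{equation*}
\frac{1}{2\tau} W_2^2(\rho^n,\rho^{n-1}) + J(\rho^n) \leq J(\rho^{n-1}).
\end{equation*}
Since $J$ is bounded below on $K$ (using $\rho\leq 1$ and \eqref{eq:phimax}), telescoping produces a uniform bound on $\sum_n W_2^2(\rho^n,\rho^{n-1})/\tau$, hence on the discrete dissipation $\int_0^T \int |v^\tau|^2 d\rho^\tau\, dt$, and the $1/2$-Hölder estimate $W_2(\rho^\tau(t),\rho^\tau(s))\leq C(|t-s|+\tau)^{1/2}$. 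From the pressure construction announced before the theorem one also extracts a uniform $L^2(0,T;H^1(\Omega))$ bound on $p^\tau$.

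Next come \emph{compactness and weak limits}. Since $(K,W_2)$ is compact, Ascoli-Arzelà produces a subsequence with $\rho^{\tau_k}\to\rho$ uniformly in $C([0,T];(\P(\Omega),W_2))$, while weak compactness delivers $p^{\tau_k}\rightharpoonup p$ in $L^2(0,T;H^1(\Omega))$ and $\rho^{\tau_k} v^{\tau_k}\rightharpoonup m$ weakly in $L^2(\Omega_T;\R^d)$. To pass to the limit in the nonlinear terms I need strong $L^2$ convergence of $\rho^{\tau_k}$: this follows from an Aubin-Lions argument, using the discrete continuity equation to bound $\pa_t \rho^\tau$ in $L^2(0,T;H^{-1}(\Omega))$ and exploiting spatial regularity coming from the $H^1$ pressure bound (together with the entropy dissipation when $\mu>0$). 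Continuity of the solution operator for \eqref{eq:phi0} then upgrades to strong $\na\phi^{\tau_k}\to \na\phi$ in $L^2(\Omega_T)$.

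With compactness in hand, passing to the limit in the discrete scheme yields the continuity equation \eqref{eq:weak11} and the flux identity \eqref{eq:weak12}. The continuity equation is standard for JKO schemes via the optimal maps $T^n$, and a Benamou-Brenier lower-semicontinuity argument on the pairs $(\rho^\tau,\rho^\tau v^\tau)$ identifies the limit momentum $m$ as $\rho v$ with $v\in L^2(d\rho)$. The flux identity passes by combining weak convergence of $p^\tau$ with the strong convergence of $\rho^\tau$ and $\rho^\tau \na\phi^\tau$. The complementarity $p(1-\rho)=0$ is preserved by pairing the weak limit $p$ against the strong $L^2$ limit $1-\rho$, starting from $p^\tau(1-\rho^\tau)=0$ pointwise. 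Finally, the energy inequality \eqref{eq:energy} follows from telescoping the discrete inequality together with lower-semicontinuity of $J$ and of the Benamou-Brenier functional $(\rho,m)\mapsto \int |m|^2/\rho$.

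The \emph{main obstacle} is securing strong $L^2$ convergence of $\rho^{\tau_k}$: the $W_2$ uniform convergence alone only gives narrow convergence, which is insufficient to pass to the limit in either $\rho^\tau \na\phi^\tau$ or $p^\tau(1-\rho^\tau)$. When $\mu>0$ the entropy dissipation produces the missing compactness through Aubin-Lions, but when $\mu=0$ the argument is more delicate and must exploit either the obstacle-problem characterization of the pressure or the smoothing effect of the elliptic equation defining $\phi$ to control the nonlinear products directly. A secondary subtle point is the disintegration of the limit momentum $m$ as $\rho v$ with $v\in L^2(d\rho)$, which I would handle by the standard lower-semicontinuity of the kinetic-energy functional along weakly converging $(\rho^\tau, m^\tau)$.
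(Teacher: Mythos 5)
Your outline matches the paper's overall JKO strategy in the a priori estimate and compactness phases, and you correctly identify the crux of the argument as the passage to the limit in the two products $\rho^\tau\nabla\phi^\tau$ and $p^\tau(1-\rho^\tau)$. However, your proposed resolution of that crux is where the genuine gap lies.

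You propose to obtain \emph{strong} $L^2$ convergence of $\rho^{\tau_k}$ via Aubin--Lions, using $\pa_t\rho^\tau \in L^2(0,T;H^{-1})$ together with ``spatial regularity coming from the $H^1$ pressure bound (together with the entropy dissipation when $\mu>0$).'' When $\mu=0$ there is no entropy dissipation, and the $H^1$ bound on $p^\tau$ does not transfer any spatial regularity onto $\rho^\tau$ (the pressure lives only on the saturated set $\{\rho^\tau=1\}$ and says nothing about $\rho^\tau$ elsewhere). So the Aubin--Lions step has no grip in the case $\mu=0$, and indeed one should not expect strong $L^2$ compactness of $\rho^\tau$ there --- the limit can oscillate between $0$ and $1$. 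You flag this as ``the main obstacle'' and gesture at an alternative (elliptic smoothing of $\phi$, obstacle-problem structure of $p$), but that gesture is precisely where the actual proof is, and it is not supplied. The paper sidesteps strong convergence of $\rho^\tau$ entirely and instead handles the two products by two different, weaker mechanisms: (i) for $\rho^\tau\nabla\phi^\tau$, it shows strong $L^2(\Omega_T)$ convergence of $\nabla\phi^\tau$, obtained from elliptic regularity plus the inequality $|\int f\,d(\mu-\nu)|\leq\|\nabla f\|_{L^2}W_2(\mu,\nu)$ applied to the $1/2$-H\"older-in-$W_2$ estimate on $\rho^\tau$; since $\rho^\tau$ only converges weakly in $L^2$, a weak-strong pairing then suffices; (ii) for $p^\tau(1-\rho^\tau)=0$, it invokes a compensated-compactness (div--curl type) lemma using the complementary regularities $p^\tau \in L^2(0,T;H^1)$ and $\rho^\tau \in C^{1/2}(0,T;H^{-1})$, neither of which gives strong $L^2$ convergence of either factor alone but whose interplay lets the product pass. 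Your proposal is missing both concrete mechanisms; in particular the compensated-compactness step for the complementarity condition is not addressed at all.

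A second, smaller remark: the paper runs the limit passage through the piecewise-geodesic interpolation $\widetilde\rho^\tau$, not just the piecewise-constant one, because the former is genuinely $1/2$-H\"older in $W_2$ (hence Ascoli applies directly) and gives the clean $C^{1/2}(0,T;H^{-1})$ bound needed for the compensated compactness above. Your proposal stays entirely with $\rho^\tau$, for which the modulus $C(|t-s|+\tau)^{1/2}$ is enough for the subsequence extraction but does not directly deliver the uniform $C^{1/2}(H^{-1})$ bound you would need if you replaced your Aubin--Lions step by the paper's mechanism (ii).
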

Since the limit is proved only for a subsequence, an immediate question is whether the equation \eqref{eq:weak} is enough to characterize $\rho(t)$ for all $t>0$. 
Such a result was proved in \cite{PQV} for a related equation (with no drift term, but with a growth term) using a delicate duality argument. 
We will prove that a similar result can be obtained in our case:
\begin{proposition}\label{prop:unique}
For $\mu\geq 0$ and 
given $\rho_{in}\in \P(\Omega)$, there exists a unique $(\rho,p)$ weak solution of \eqref{eq:weak} in  the sense of Definition \ref{def:weak}.
In particular, the whole sequence (and not just a subsequence) converges in Theorem \ref{thm:conv}.
\end{proposition}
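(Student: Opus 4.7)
The plan is to adapt the duality argument of \cite{PQV} to accommodate the nonlocal chemotactic coupling. Let $(\rho_1,p_1)$ and $(\rho_2,p_2)$ be two weak solutions of \eqref{eq:weak} sharing the initial datum $\rho_{in}$, and write $r:=\rho_1-\rho_2$, $q:=p_1-p_2$, $\psi:=\phi_1-\phi_2$, and $W:=\mu r+q$. The structural ingredient driving the argument is the Hele-Shaw monotonicity $qr\geq 0$ a.e., which follows directly from $p_i\in P(\rho_i)$: whenever $\rho_1(x,t)>\rho_2(x,t)$ one has $\rho_2<1$, hence $p_2=0\leq p_1$, and symmetrically. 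In particular $W=Ar$ a.e.\ for some nonnegative $A\geq\mu$. Subtracting the weak formulations of Definition \ref{def:weak} for the two solutions and using $\rho_1\na\phi_1-\rho_2\na\phi_2=r\na\phi_1+\rho_2\na\psi$ produces the identity
\begin{equation*}
\int_0^T\!\!\int_\Omega\bigl(r\,\pa_t\zeta+W\,\Delta\zeta+r\,\na\phi_1\cdot\na\zeta+\rho_2\,\na\psi\cdot\na\zeta\bigr)\,dx\,dt=0
\end{equation*}
for every smooth $\zeta$ with $\zeta(T)=0$ and $\na\zeta\cdot n|_{\pa\Omega}=0$.

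For a given source $H\in L^2(\Omega_T)$, I take $\zeta=\zeta_n$ to be the solution of the uniformly parabolic backward dual problem
\begin{equation*}
-\pa_t\zeta_n-A_n\,\Delta\zeta_n-\na\phi_1\cdot\na\zeta_n=H,\qquad\zeta_n(T)=0,\qquad\na\zeta_n\cdot n|_{\pa\Omega}=0,
\end{equation*}
with $A_n:=A+1/n$. The $L^\infty$ bound on $A$ needed to make sense of this follows from the a priori bound $p_i\in L^\infty(\Omega_T)$, itself a consequence of the obstacle problem characterization (item~(2) of the main results) and $\|\phi_i\|_\infty\leq 1$. Inserting $\zeta_n$ into the identity and using the dual equation yields
\begin{equation*}
\int_0^T\!\!\int_\Omega r\,H\,dx\,dt=-\frac{1}{n}\int_0^T\!\!\int_\Omega r\,\Delta\zeta_n\,dx\,dt+\int_0^T\!\!\int_\Omega \rho_2\,\na\psi\cdot\na\zeta_n\,dx\,dt.
\end{equation*}
Standard backward parabolic estimates supply the $n$-independent bounds $\|\sqrt{A_n}\,\Delta\zeta_n\|_{L^2_{t,x}}+\|\na\zeta_n\|_{L^\infty_tL^2_x}\leq C\|H\|_{L^2_{t,x}}$; in particular $\|\Delta\zeta_n\|_{L^2}\leq C\sqrt{n}\,\|H\|_{L^2}$, so the first term on the right is $O(1/\sqrt n)$ and vanishes as $n\to\infty$.

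Choosing $H$ as a smooth approximation of $r\cdot\chi_{[0,T_0]}$ turns the left-hand side into $\|r\|_{L^2(\Omega_{T_0})}^2$. Elliptic regularity for \eqref{eq:phi0} gives $\|\na\psi(t)\|_{L^2}\leq\|r(t)\|_{L^2}$, and combined with $\rho_2\leq 1$ the nonlocal term is bounded by
\begin{equation*}
\Bigl|\int_0^{T_0}\!\!\int_\Omega\rho_2\,\na\psi\cdot\na\zeta_n\Bigr|\leq\|\na\psi\|_{L^2_{t,x}}\|\na\zeta_n\|_{L^2_{t,x}}\leq\sqrt{T_0}\,\|\na\zeta_n\|_{L^\infty_tL^2_x}\|r\|_{L^2_{t,x}}\leq C\sqrt{T_0}\,\|r\|_{L^2(\Omega_{T_0})}^2.
\end{equation*}
Passing to the limit $n\to\infty$ gives $\|r\|_{L^2(\Omega_{T_0})}^2\leq C\sqrt{T_0}\,\|r\|_{L^2(\Omega_{T_0})}^2$, which forces $r\equiv 0$ on $[0,T_0]$ for $T_0$ small enough; iterating in time yields uniqueness on every $[0,T]$.

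I expect the main technical obstacle to be the rigorous construction of $\zeta_n$ and the justification that it is admissible as a test function in the weak identity when $A_n$ is only in $L^\infty$ with no spatial regularity: this typically requires a further mollification of $A_n$ in the spirit of \cite{PQV}. The genuinely new aspect compared to the case without chemotaxis is the choice of an $L^2$-based source $H\propto r$ (rather than the $L^1$-type $H\propto\mathrm{sgn}(r)$ used in \cite{PQV}), which is what makes the nonlocal term absorbable on the left-hand side for small time slabs.
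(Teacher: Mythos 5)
Your proposal captures the duality strategy of \cite{PQV}, which is indeed the approach the paper takes, but two of the steps that you treat as routine are in fact the points where the paper does all the work, and as written they do not hold.

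First, the factorization $W:=\mu r+q=Ar$ is impossible on the set $\{\rho_1=\rho_2=1,\;p_1\ne p_2\}$, which is not a priori negligible: there $r=0$ but $W=q\ne 0$, so no $A$ exists. Even away from this set, $A=\mu+q/r$ is unbounded whenever $|r|$ is small and $|q|$ is not, so the $L^\infty$ bound on $p_i$ does \emph{not} give $A\in L^\infty$ and the dual problem is not uniformly parabolic in the way you assume. The paper avoids this by normalizing with $r+q$ rather than $r$: it sets $A=(\rho_1-\rho_2)/(\rho_1-\rho_2+p_1-p_2)$ and $B=(p_1-p_2)/(\rho_1-\rho_2+p_1-p_2)$, both valued in $[0,1]$, and writes the identity \eqref{eq:psi0} against the combination $(\rho_1-\rho_2+p_1-p_2)$; the diffusion coefficient in the dual equation \eqref{eq:psin} is then $\mu+B_n/A_n$ with $B_n,A_n$ regularizations of $B,A$ bounded away from $0$ and $1$, which can be made uniformly elliptic for each $n$.

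Second, your invocation of ``standard backward parabolic estimates supplying an $n$-independent $C$'' cannot be made rigorous here, and the paper explicitly singles this out as the main difficulty: the $-\Delta\zeta_n$ energy estimate requires $\|D^2\phi_1\|_{L^\infty}$ through Gronwall, but $\phi_1\in W^{2,p}$ for every $p<\infty$ only, not $p=\infty$. The paper's Lemma~\ref{lem:phin} constructs approximations $\phi_{1,n}$ with $\|D^2\phi_{1,n}\|_{L^\infty}\le\lambda\ln n$ and $\|\na\phi_{1,n}-\na\phi_1\|_{L^2}\lesssim n^{-1/2}$, and Lemma~\ref{lem:bdpsin} shows that the dual estimates then degrade like $n^{\lambda T}$; the duality error terms are $O(n^{-1/2}\,n^{\lambda T/2})$, which vanish only for $T<1/\lambda$, and this is precisely what dictates the short-time argument followed by iteration. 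Your $O(1/\sqrt n)$ error estimate and the final absorption $\|r\|^2\le C\sqrt{T_0}\,\|r\|^2$ are using a finite constant $C$ that is in fact not available.

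One genuine idea in your write-up that differs from the paper and could be worth noting: you keep the nonlocal term $\rho_2\na\psi\cdot\na\zeta$ outside the dual operator and absorb it via an $L^2$-type source $H\approx r$ together with smallness of the time slab, rather than incorporating $\na G*(\rho_2\na\psi)$ into the dual PDE as the paper does. This is a clean simplification in principle, but it cannot compensate for the two gaps above: the algebraic obstruction to $W=Ar$ must be handled by a decomposition in the spirit of the paper's $A,B$, and the log-growth approximation of $\phi_1$ is indispensable to make the parabolic estimates honest.
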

The proof  of Proposition \ref{prop:unique} would be a relatively straightforward adaptation of \cite{PQV} if the potential $\phi$ was in $W^{2,\infty}(\Omega)$. Unfortunately, Calderon-Zygmund estimates only give $\phi \in L^\infty (0,\infty;W^{2,p}(\Omega) )$ for all $p<\infty$ in our case. 
Particular care will have to be taken to handle the drift term because of this fact (see Section \ref{sec:unique}).
Uniqueness for the Keller-Segel model without the density constraint is proved in \cite{CaLiMa14}. 
In \cite{DiMe16}, uniqueness is proved for an advection-diffusion equation with density constraint, under some monotonicity assumptions on the (given) vector field. Very recently the uniqueness for a similar system with Newtonian kernel in $\R^n$ and $\mu=0$ was shown by a different approach in \cite{HLP}.

\medskip

Before stating our next result, we introduce, for a given density $\rho\in K$ the pressure set
\begin{equation}\label{eq:H1rho}
H^1_\rho := \left\{ q\in H^1(\Omega) \, ;\, q\geq 0, \; q(1-\rho)=0 \mbox{ a.e. } \right\}.
\end{equation}
We recall that weak solutions of \eqref{eq:weak} in  the sense of Definition \ref{def:weak} are in particular H\"older continuous in time with respect to $W_2$. It follows (see Lemma \ref{lem:tech}) that the function
$t\mapsto \int_\Omega \rho(x,t)\zeta(x)\, dx$
is continuous for all $\zeta\in H^1(\Omega)$ and thus well defined for all $t>0$.
It is thus possible to define the set $H^1_{\rho(t_0)}$ for all $t_0$ (even though $\rho(x,t)$ is only defined for a.e. $x,t$)  as the set of $q\in H^1(\Omega)$ satisfying $q\geq 0$ and $\int_\Omega (1-\rho(x,t_0))q(x)\, dx=0$.
With this, we can now state our main result concerning the pressure $p(t_0)$:
\begin{proposition}\label{prop:obs}
With the notations of Theorem \ref{thm:conv} and for any $t_0>0$, let $q(t_0)$ be the unique solution of the variational problem (obstacle problem):
\begin{equation}\label{eq:obst0}
\begin{cases}
q \in H^1_{\rho (t_0)} \\
\displaystyle  \int [\na q-\na \phi(t_0)]  \cdot [\na q - \na \zeta]  \, dx \leq 0\qquad \forall \zeta \in H^1_{\rho (t_0)}
 \end{cases}
\end{equation}
where $\phi (t_0)$ is the solution of \eqref{eq:phi0} with $\rho=\rho (t_0)$. 
Then the limiting pressure satisfies $ p (t) = q(t)$ a.e. $t>0$, and $\na p^\tau$ converge to $\na q$ strongly in $L^2(\Omega_T)$.

\end{proposition}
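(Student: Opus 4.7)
The plan is to show that $p(t)$ satisfies the variational inequality \eqref{eq:obst0} and to conclude $p(t)=q(t)$ by uniqueness of the obstacle problem. The central insight is an extremal property in time: for any $\eta\in H^1_{\rho(t_0)}$, since $\eta\ge 0$ and $\eta(1-\rho(t_0))=0$ a.e.\ (so $\eta$ is a.e.\ supported on $\{\rho(t_0)=1\}$), the function
$$g(t):=\int_\Omega \rho(t)\,\eta\, dx$$
satisfies $g(t)\le \int_\Omega \eta\, dx=\int_\Omega \rho(t_0)\eta\, dx=g(t_0)$ for every $t$, hence attains its global maximum at $t=t_0$. The weak continuity equation in Definition \ref{def:weak} gives $g'(t)=\int \rho v\cdot \na \eta$ in the distributional sense of $t$; maximality at $t_0$, applied through both one-sided difference quotients together with a Lebesgue point argument, yields the orthogonality
$$\int_\Omega v(t_0)\cdot \na \eta \, dx=0\qquad \hbox{for a.e.\ } t_0>0,$$
where I used $\rho(t_0)\na \eta=\na \eta$ a.e.\ because $\na\eta$ is a.e.\ supported on $\{\rho(t_0)=1\}$.

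Next, testing the equation $\rho v=-\mu \na \rho+\rho \na \phi-\na p$ against $\na \eta$ for $\eta\in H^1_{\rho(t)}$, and using once more that $\na \eta$ is supported on $\{\rho=1\}$ (so $\na \rho=0$ a.e.\ there and $\rho=1$), I get
$$\int \na p\cdot \na \eta =\int \na \phi\cdot \na \eta -\int v\cdot \na \eta=\int \na \phi\cdot \na \eta,$$
the orthogonality killing the last integral. Applying this identity with $\eta=p\in H^1_\rho$ and with $\eta=\zeta\in H^1_\rho$, one directly computes
$$\int (\na p-\na \phi)\cdot(\na p-\na \zeta)\, dx=\|\na p\|^2-\int\na p\cdot \na \zeta-\int\na \phi\cdot \na p+\int \na \phi\cdot \na \zeta=0\le 0$$
for every $\zeta\in H^1_\rho$, so $p$ solves \eqref{eq:obst0}. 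By uniqueness of the obstacle problem, $p(t)=q(t)$ a.e.

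For the strong convergence of $\na p^\tau$ in $L^2(\Omega_T)$, I will establish a discrete subsolution-type bound for $p^n$. Since $T^n\#\rho^n=\rho^{n-1}$ with $\rho^{n-1}\le 1$, the change-of-variables identity forces $|\det DT^n|\ge 1$ a.e.\ on $\{\rho^n=1\}$, which at leading order in $\tau$ translates to $\div(v^n)\le 0$ on this set; passing to the weak form, $\int v^n\cdot \na \eta\ge -o(1)$ for $\eta \in H^1_{\rho^n}$, the error going to zero with $\tau$. Testing the discrete EL relation $\rho^n v^n=-\mu \na\rho^n+\rho^n\na \phi^n-\na p^n$ against $\na p^n$ (which lies in the cone $H^1_{\rho^n}$) then yields $\|\na p^n\|_{L^2}^2\le \int \na \phi^n\cdot \na p^n+o(1)$. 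Combining strong convergence $\na\phi^\tau\to \na\phi$ in $L^2$ (from elliptic regularity of \eqref{eq:phi0} and the convergence of $\rho^\tau$) with the weak convergence $\na p^\tau\rightharpoonup \na p$, I obtain $\limsup \|\na p^\tau\|_{L^2}^2\le \int \na\phi\cdot \na p=\|\na p\|_{L^2}^2$, where the last equality uses the obstacle identity established above. Combined with weak lower semicontinuity, this gives norm convergence, hence strong convergence in $L^2(\Omega_T)$.

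The hard part will be making the orthogonality rigorous: the test function $\eta$ depends on the (merely $L^\infty$) density $\rho(t_0)$, so it cannot be used directly in the weak formulation. The argument requires approximating $\eta$ by smooth functions whose supports are contained in $\{\rho(t_0)=1\}$ (using continuity of $\rho$ in time with respect to $W_2$), and handling the "a.e.\ $t_0$" qualifier via a countable dense family of test functions in $H^1(\Omega)$ combined with Lebesgue differentiation. A similar approximation issue arises in the discrete subsolution bound, where $|\det DT^n|$ must be interpreted for transport maps of limited regularity.
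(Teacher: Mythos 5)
Your plan is genuinely different from the paper's, and the difference matters: the paper never argues for the two-sided orthogonality $\int v(t_0)\cdot\na\eta\,dx=0$ for all $\eta\in H^1_{\rho(t_0)}$ that you make the cornerstone of your argument. Instead the paper establishes the one-sided inequality $\int(\na p-\na\phi)\cdot\na(p-\zeta)\le 0$ by starting from the \emph{discrete} variational inequality of Lemma \ref{lem:pressureineq} (equivalently \eqref{eq:pressureobst}), which is an exact consequence of the minimizing-movement step with no $o(\tau)$ error, adding the discrete EL relation \eqref{eq:vzeta}--\eqref{eq:EL5}, integrating over a window $[t_0,t_0+\delta]$, passing $\tau\to0$, and only then letting $\delta\to0$. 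The crucial feature of that route is that the right-hand side contribution involving $\rho$ is $\frac{1}{\delta}\int[\rho(t_0+\delta)-\rho(t_0)]\zeta\,dx\le0$, which holds \emph{for every fixed $\delta>0$} once $\zeta\in H^1_{\rho(t_0)}$; no Lebesgue-point property of a time derivative of $\int\rho\zeta$ is needed for this side.

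Your orthogonality argument has a genuine gap that you flag but do not resolve. You need: for a.e.\ $t_0$, \emph{for every} $\eta\in H^1_{\rho(t_0)}$, $t_0$ is a Lebesgue point of $t\mapsto\int\rho(t)v(t)\cdot\na\eta\,dx$. The countable-dense-family device does not deliver this, because the relevant cone $H^1_{\rho(t_0)}$ varies with $t_0$ and $H^1$-approximants of a given $\eta\in H^1_{\rho(t_0)}$ need not respect the constraint $\eta(1-\rho(t_0))=0$; the set of $t_0$ where a fixed $\eta$ lies in $H^1_{\rho(t_0)}$ can have measure zero, in which case Lebesgue differentiation gives no information at those $t_0$. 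A second, structural point: the equality you aim for is strictly stronger than what the obstacle problem requires. The variational characterization \eqref{eq:obst0} needs only $\int v\cdot\na\zeta\le0$ together with $\int v\cdot\na p=0$; the one-sided bound comes cheaply from $g(t_0+\delta)\le g(t_0)$, whereas the two-sided equality requires both difference quotients to converge to the same limit, i.e.\ precisely the Lebesgue-point property you cannot guarantee. You are spending effort proving something stronger, and the extra strength is exactly where the gap appears.

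A secondary issue: for the strong $L^2$ convergence of $\na p^\tau$, you derive the discrete subsolution bound by expanding $\det(I-\tau Dv^n)$ to first order in $\tau$. This requires differentiability of the optimal map $T^n$ that is not available, and introduces $o(1)$ errors that you then have to chase. The paper avoids this entirely: Lemma \ref{lem:pressureineq} is proved by the McCann-interpolation argument (convexity of $M\mapsto|\det M|^{-1}$ on positive-definite matrices), which gives $\int\na\zeta\cdot(\na p^n-\na\phi^n)\le0$ \emph{exactly} and with no regularity assumptions on $T^n$. From there, $\zeta=p^n$ gives \eqref{eq:pressureobst} and the strong convergence follows by the same limsup-liminf sandwich you describe, but without any error terms to control. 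You should replace the $\det DT$ expansion by the convexity argument used in the paper's Lemma \ref{lem:pressureineq}.
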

After redefining $p(t)$ on a set of measure zero, we can thus assume that $p(t)=q(t)$ for all $t>0$.

\medskip

    Note that formally \eqref{eq:obst0} states $-\Delta (q- \phi (t_0))\geq 0$ in $\{\rho(t_0)=1\}$, with equality in $\{q>0\}$.
An important ingredient in the proof of Proposition \ref{prop:obs} will be the fact, interesting in itself, that the discrete pressure $p^\tau(t)$ given in \eqref{eq:interrhop} is a subsolution of  \eqref{eq:obst0}(with $\rho^\tau$ instead of $\rho$), for all $t>0$ and all $\tau>0$. (see Lemma \ref{lem:pressureineq} and Remark \ref{rmk:pressureineq}).
Moreover, the discrete solution of the obstacle problem is not much larger from $p^{\tau}$. Indeed we can show that the discrete solution of the obstacle problem converges, like $p^\tau$, to the solution of \eqref{eq:obst0}:
\begin{proposition}\label{prop:obs2}
For all $\tau>0$ and $t_0>0$, let $q^\tau(t_0)$ be the unique solution of the 
\begin{equation}\label{eq:obst0tau}
\begin{cases}
q \in H^1_{\rho^\tau (t_0)} \\
\displaystyle  \int [\na q-\na \phi^\tau(t_0)]  \cdot [\na q - \na \zeta]  \, dx \leq 0\qquad \forall \zeta \in H^1_{\rho^\tau (t_0)}
 \end{cases}
\end{equation}
Then $p^\tau \leq q^\tau$. Moreover $\na q^\tau$ converges strongly in $L^2(\Omega_T)$ to   $\na q$ (with $q$ solution of \eqref{eq:obst0}). In particular, we have
$$\| \na (p^\tau-q^\tau)\|_{L^2(\Omega_T)} \to 0.$$
\end{proposition}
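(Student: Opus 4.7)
My plan is to proceed in three stages: establish the pointwise inequality $p^\tau\le q^\tau$, identify any weak $L^2(0,T;H^1(\Omega))$ subsequential limit of $q^\tau$ as the solution $q$ of \eqref{eq:obst0}, and finally upgrade weak to strong convergence via an energy equality. The central observation that keeps the argument clean is that $p^\tau$ belongs by construction to the $\tau$-dependent admissible class $L^2(0,T;H^1_{\rho^\tau})$ and has already been shown to converge strongly to $q$ in Proposition~\ref{prop:obs}. It can therefore serve as a built-in recovery sequence in a $\Gamma$-convergence-type argument, obviating the need to construct explicit approximations in $H^1_{\rho^\tau}$ of test functions in $H^1_\rho$.

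For the inequality $p^\tau\le q^\tau$, I would use the lattice property of $H^1_{\rho^\tau}$: test \eqref{eq:obst0tau} for $q^\tau$ against $\min(q^\tau,p^\tau)$ and the subsolution inequality for $p^\tau$ (Lemma~\ref{lem:pressureineq}) against $\max(q^\tau,p^\tau)$. Adding, and using $\nabla\max(a,b)+\nabla\min(a,b)=\nabla a+\nabla b$ to cancel the $\nabla\phi^\tau$ contributions, yields
\[
\int_\Omega|\nabla(p^\tau-q^\tau)_+|^2\,dx\le 0.
\]
Hence $(p^\tau-q^\tau)_+$ is constant on $\Omega$; since both $p^\tau$ and $q^\tau$ vanish on $\{\rho^\tau<1\}$, a set of Lebesgue measure at least $|\Omega|-1>0$ (because $\rho^\tau$ is a probability density bounded by $1$), this constant must be zero.

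For the identification, testing \eqref{eq:obst0tau} with $\zeta=0$ and $\zeta=2q^\tau$ (both admissible) gives the energy equality $\int_\Omega|\nabla q^\tau|^2=\int_\Omega\nabla\phi^\tau\cdot\nabla q^\tau$, which via \eqref{eq:phiH1} and Poincar\'e (applicable since $q^\tau$ vanishes on a set of measure $\ge|\Omega|-1$) yields a uniform bound of $q^\tau$ in $L^2(0,T;H^1)$; extract a weak limit $q^*$. The constraint $q^*(1-\rho)=0$ a.e.\ passes to the limit from $q^\tau(1-\rho^\tau)=0$ via the weak-$L^2(H^1)$/strong-$L^2(H^{-1})$ pairing, using Lemma~\ref{lem:tech} to convert the uniform $W_2$ convergence of $\rho^\tau$ into strong $L^2(0,T;H^{-1})$ convergence. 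I would then view $q^\tau$ as the unique minimizer of
\[
F^\tau(q):=\int_0^T\!\!\int_\Omega\Big(\tfrac12|\nabla q|^2-\nabla\phi^\tau\cdot\nabla q\Big)\,dx\,dt
\]
over $L^2(0,T;H^1_{\rho^\tau})$. Since $p^\tau$ is admissible with $\nabla p^\tau\to\nabla q$ strongly (Proposition~\ref{prop:obs}) and $\nabla\phi^\tau\to\nabla\phi$ strongly (elliptic regularity applied to \eqref{eq:phi0} combined with the $H^{-1}$ convergence of $\rho^\tau$), we have $F^\tau(p^\tau)\to F(q)$. Weak lower semicontinuity plus the strong-weak pairing for the linear term then gives
\[
F(q^*)\le\liminf F^\tau(q^\tau)\le\limsup F^\tau(q^\tau)\le F(q),
\]
which forces $q^*=q$ by uniqueness of the obstacle-problem minimizer and promotes the convergence to the full sequence. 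Equality throughout also gives $F^\tau(q^\tau)\to F(q)$; via the energy equality this reads $\|\nabla q^\tau\|_{L^2(\Omega_T)}\to\|\nabla q\|_{L^2(\Omega_T)}$, upgrading weak to strong convergence of $\nabla q^\tau$. The final statement $\|\nabla(p^\tau-q^\tau)\|_{L^2(\Omega_T)}\to 0$ then follows by the triangle inequality from Proposition~\ref{prop:obs}.

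The main subtlety is the moving admissible class $H^1_{\rho^\tau}$. A direct $\Gamma$-convergence argument would require constructing, for every $\zeta\in H^1_\rho$, a recovery sequence $\zeta^\tau\in H^1_{\rho^\tau}$ converging strongly to $\zeta$, which is delicate because $\rho^\tau\to\rho$ only weakly and the free boundary $\pa\{\rho=1\}$ has no a priori regularity. The trick in the plan above is that one does not need arbitrary recovery sequences: the single competitor $p^\tau$, already controlled by Proposition~\ref{prop:obs}, suffices to majorize $F^\tau(q^\tau)$ by $F(q)$ in the limit, and the uniqueness of the minimizer does the rest.
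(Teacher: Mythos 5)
Your strategy for the strong convergence coincides in substance with the paper's: extract a weak $L^2(0,T;H^1)$ limit $q^*$ of $q^\tau$, establish $q^*(t_0)\in H^1_{\rho(t_0)}$ by the compensated-compactness device of Lemma~\ref{lem:pH}, and compare against $p^\tau$, whose gradient already converges strongly by Proposition~\ref{prop:obs}. The paper tests \eqref{eq:obst0tau} with $\zeta=p^\tau$, passes to the limit, adds the result to \eqref{eq:obst0} tested with $\zeta=\bar q$, and obtains $\int_0^T\int_\Omega|\nabla(\bar q-p)|^2\le 0$ directly; your energy-comparison argument ($F(q^*)\le\limsup F^\tau(q^\tau)\le\lim F^\tau(p^\tau)=F(q)$, then invoke uniqueness of the minimizer) is an equivalent reformulation of the same computation, and your observation that the single competitor $p^\tau$ obviates the need to build recovery sequences in the moving class $H^1_{\rho^\tau}$ is precisely the point that makes the proof work.

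However, the test-function choice you prescribe for $p^\tau\le q^\tau$ does not close. You use $\zeta_1=\min(q^\tau,p^\tau)$ in \eqref{eq:obst0tau} and $\zeta_2=\max(q^\tau,p^\tau)$ in Lemma~\ref{lem:pressureineq}. Summing the two inequalities, the $\nabla\phi^\tau$ contribution is
\[
\int_\Omega\nabla\phi^\tau\cdot\nabla\bigl(\zeta_1-q^\tau-\zeta_2\bigr)\,dx
=-\int_\Omega\nabla\phi^\tau\cdot\nabla\bigl(q^\tau+|p^\tau-q^\tau|\bigr)\,dx,
\]
which does not vanish, and the remaining quadratic terms do not reduce to $\int|\nabla(p^\tau-q^\tau)_+|^2$. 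The cancellation you invoke requires $\zeta_2=\zeta_1-q^\tau$; with $\zeta_1=\min$ this would force $\zeta_2=-(q^\tau-p^\tau)_+\le 0$, which is not admissible. The correct pairing is the one in Remark~\ref{rmk:pressureineq}: take $\zeta_1=\max(q^\tau,p^\tau)=q^\tau+(p^\tau-q^\tau)_+$ in \eqref{eq:obst0tau} and $\zeta_2=(p^\tau-q^\tau)_+=\zeta_1-q^\tau$ in Lemma~\ref{lem:pressureineq}; adding then gives $\int|\nabla(p^\tau-q^\tau)_+|^2\le 0$. (One then concludes $(p^\tau-q^\tau)_+=0$ as you do, using connectedness of $\Omega$ and that both functions vanish on the positive-measure set $\{\rho^\tau<1\}$.) This is a local, fixable slip, but as written the comparison step fails.
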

    
\medskip

If the set $\{\rho(t_0)=1\}$ is smooth enough, then \eqref{eq:obst0} is the usual variational formulation of an obstacle problem in the set $\{\rho(t_0)=1\}$ with zero Dirichlet boundary condition on $\pa \{\rho(t_0)=1\}\cap \Omega$ and Neumann condition on $\pa\Omega \cap \{p>0\}$.
Using the fact that $\Delta \phi (t_0)< 0$ in $\{\rho(t_0)=1\}$ we can in fact show (still assuming that $\{\rho(t_0)=1\}$ is smooth enough) that  the solution of \eqref{eq:obst0} solves
$$
\begin{cases}
\Delta  q= \Delta  \phi  & \mbox{ in } \{\rho (t_0)=1\}   \\
q=0 &  \mbox{ on } \pa \{\rho (t_0)=1\}  \cap \Omega,\\
(\na \phi-\na q)\cdot n = 0  & \mbox{ on }\pa \Omega\cap  \{\rho (t_0)=1\} 
\end{cases}
$$
which makes rigorous the equation for $p $ in \eqref{eq:introHS}.
However, it is difficult to show that  the set $\{\rho (t_0)=1\}$ is regular (in fact, we do not know how to prove that its boundary has zero Lebesgue measure) which is why the variational formulation \eqref{eq:obst0} is useful.
\medskip

All the results presented above  hold if we add an external potential $\phi_e\in L^\infty(0,\infty;W^{2,\infty}(\Omega))$ by replacing \eqref{eq:phiG} with \eqref{eq:phiG1}. In that case, and depending on the sign of $\Delta \phi_e$, it is possible for the unique solution of \eqref{eq:obst0} to have support $\{q>0\} \neq \{\rho(t_0)=1\}$. This corresponds to an instant collapse of the saturated set $ \{\rho(t)=1\}$ similar to the phenomenon observed and studied in \cite{GKM}.

\medskip

The variational formulation \eqref{eq:obst0} implies in particular the following:
\begin{corollary}\label{cor:pressure}
The pressure $p (x,t)$ satisfies the complementarity condition 
\begin{equation}\label{eq:compcond}
 p ( \Delta p  -  \Delta \phi ) = 0 \mbox{ in }\mathcal D'(\Omega_T)
 \end{equation}
and the velocity $v =  \rho  \na \phi - \na p $ satisfies  $v(t) = P_{C(\rho (t))}( \na \phi (t))$ where $P_{C(\rho)}$ denotes the orthogonal projection onto the set of admissible velocity defined in Appendix \ref{sec:projection}.
\end{corollary}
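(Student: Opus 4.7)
The plan is to derive both assertions from the obstacle variational inequality \eqref{eq:obst0} that characterizes $p(t)$. For the complementarity condition \eqref{eq:compcond}, the crucial observation is that the admissible set $H^1_{\rho(t)}$ is stable under multiplicative perturbations of $p$: for any $\psi\in C_c^\infty(\Omega_T)$ and $|\e|$ small enough that $1+\e\psi\geq 0$, the function $\zeta(\cdot,t):=p(t)(1+\e\psi(\cdot,t))$ lies in $H^1_{\rho(t)}$, since it is non-negative and $\zeta(1-\rho)=p(1-\rho)(1+\e\psi)=0$ a.e. Plugging this $\zeta$ into \eqref{eq:obst0} and exploiting the freedom to choose both signs of $\e$ upgrades the one-sided obstacle inequality into the Euler--Lagrange equality
\begin{equation*}
\int_\Omega [\na p(t)-\na\phi(t)]\cdot \na(p(t)\psi(\cdot,t))\,dx=0 \quad \text{for a.e.\ } t.
\end{equation*}

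Integrating this identity in $t$ and performing a single integration by parts -- legitimate because $p\psi\in L^2(0,T;H^1_0(\Omega))$ (as $\psi$ is compactly supported in $\Omega_T$) pairs with $\Delta p-\Delta\phi\in L^2(0,T;H^{-1}(\Omega))$ -- rewrites the left-hand side as $-\langle p(\Delta p-\Delta\phi),\psi\rangle_{\mathcal D'(\Omega_T)}$, which yields \eqref{eq:compcond}.

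The projection identity $v(t)=P_{C(\rho(t))}(\na\phi(t))$ is then an immediate consequence of Proposition \ref{prop:obs} combined with the variational characterization of $P_{C(\rho)}$ recalled in Appendix \ref{sec:projection}: the projection of a vector field $u$ onto $C(\rho)$ is precisely $u-\na q$, where $q\in H^1_\rho$ is the unique solution of the obstacle problem \eqref{eq:obst0} with $u$ in place of $\na\phi$. Applied with $u=\na\phi(t)$, uniqueness forces $q(t)=p(t)$, and the formula for $v$ displayed in the statement follows after using $\na p=0$ a.e.\ on $\{\rho<1\}$. There is no serious obstacle in the argument; the only technical point to check carefully is the admissibility of the perturbation $\zeta=p(1+\e\psi)$ in $H^1_{\rho(t)}$, which relies solely on the a.e.\ identity $p(1-\rho)=0$ rather than on any pointwise regularity of $p$ or $\rho$. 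Once this is granted, the remainder reduces to a routine first-variation computation inside the already-established obstacle problem.
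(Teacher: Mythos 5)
Your argument matches the paper's own proof in both halves. For the complementarity condition \eqref{eq:compcond}, you and the paper both test the obstacle inequality \eqref{eq:obst0} with the multiplicative perturbations $\zeta = p(1\pm\e\psi)$ (the paper writes $\zeta_\pm = p(1\pm u)$ with $\|u\|_{L^\infty}\leq 1$, which is the same idea), obtaining the equality $\int(\na p - \na\phi)\cdot\na(p\psi)\,dx = 0$ and hence the distributional identity. For the projection identity, the only difference is stylistic: you cite the Appendix proposition $P_{C(\rho)}(\na\phi) = \na\phi - \na p$ directly, whereas the paper re-derives it inline from \eqref{eq:obst0} by taking $\zeta=0$ and $\zeta=2p$ to get orthogonality $\int\na p\cdot v = 0$, and then using $\rho\na\zeta = \na\zeta$ to conclude $v\in C(\rho)$. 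Your shortcut is mathematically equivalent and arguably cleaner.

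One small caveat on your final sentence: the reconciliation of $P_{C(\rho)}(\na\phi) = \na\phi - \na p$ with the statement's $v = \rho\na\phi - \na p$ is not achieved merely by ``$\na p=0$ a.e.\ on $\{\rho<1\}$'' --- the discrepancy between the two expressions is $(1-\rho)\na\phi$, which does not vanish on $\{0<\rho<1\}$. The correct reading is $\rho$-a.e.: from $\rho v = \rho\na\phi - \na p$ and $\rho\na p = \na p$ (since $p\in H^1_\rho$), one has $\rho v = \rho(\na\phi - \na p)$, so $v$ agrees with $\na\phi - \na p = P_{C(\rho)}(\na\phi)$ wherever $\rho>0$. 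This is the interpretation the paper's proof uses implicitly (working throughout with $\rho v$ and repeatedly invoking $\rho\na\zeta = \na\zeta$), and it is a looseness inherited from the corollary's notation rather than a defect in your argument.
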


\begin{remark} 
The key estimates used to prove Theorem \ref{thm:conv} (see Lemma  \ref{lem:unifestimates1} and \ref{lem:tech}) are independent of $\mu$.
Furthermore, Proposition \ref{prop:obs} implies that $\nabla p$ is  uniformly bounded in  $L^2(\Omega_T)$  with respect to  $\mu$.  
We can thus proceed as in the proofs of the results above to show that the weak solution of \eqref{eq:weak} with $\mu>0$  (constructed in Theorem \ref{thm:conv}) converges, as $\mu\to0$ to the weak solution of \eqref{eq:weak} with $\mu=0$ (with the gradient of the pressure converging in $L^2$ strong). In particular, the 
Richards equation 
\eqref{eq:Rich} is an approximation of the Hele-Shaw problem \eqref{eq:HS1}-\eqref{eq:HS2} when $\mu\ll1$.
\end{remark}

\medskip

Finally, an important question, when working with the weak equation \eqref{eq:weak} and the related Hele-Shaw problem \eqref{eq:introHS}   is whether the solution of \eqref{eq:weak} is a characteristic function $\chi_{\Omega_s(t)}$ for all time, or takes value in $(0,1)$.
Indeed, it is only when $\rho=\chi_{\Omega_s(t)}$ that we can claim that  \eqref{eq:weak}  is related to the Hele-Shaw problem \eqref{eq:introHS}. Otherwise, the evolution of $\rho$ in the region $\{0<\rho <1\}$ must be taken into account.

Such a property clearly does not hold when $\mu>0$. When $\mu=0$, it can be proved that the discrete  approximation $\rho^{\tau}$ is always a characteristic function. This   result  was first proved in \cite{JKM} (we recall the proof in appendix).
However, this property is typically lost when $\tau\to0$ without additional conditions. 
Assuming that the initial condition $\rho_{in}=\chi_{E_{in}}$ is a characteristic function (and that $\mu=0$) we can (almost) prove that $\rho$  is characteristic function for all time.
A similar result was proved in \cite{CKY} when $\phi$ is a fixed potential satisfying $-\Delta \phi >0$ by using viscosity solution type arguments.
In our case, we only have $-\Delta \phi >0$ in the set $\{\rho=1\}$ but this is enough to recover the result of \cite{CKY}  and we will prove (using a completely different approach from \cite{CKY}):
\begin{theorem}\label{thm:charac}
Assume that the initial condition $\rho_{in}=\chi_{E_{in}}$ is a characteristic function and that $\mu=0$.
Let $(\rho,p)$ be the solution of \eqref{eq:weak} provided by Theorem \ref{thm:conv}.

For all $t>0$ there exists a set $\Omega_s(t)$ such that
\item(i) $\rho(x,t)=1$ a.e. in $\Omega_s(t)$
\item(ii) $\rho(x,t) = 0 $ a.e. in $\Omega \setminus \overline{\Omega_s(t)}$.
\end{theorem}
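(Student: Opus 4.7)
The plan is to combine a measure-theoretic rigidity argument with a Lagrangian representation of the weak solution, using the incompressibility of the flow on the saturated region.

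The core idea is that the flow of the velocity field $v$ (defined by $\rho v = \rho\na\phi - \na p$) is essentially volume-preserving on the saturated region: on $\{p>0\}\subseteq \{\rho=1\}$, the complementarity condition $\Delta p = \Delta \phi$ from Corollary~\ref{cor:pressure} gives $\div v = \Delta\phi - \Delta p = 0$. Consequently, mass initially at density $1$ in $E_{in}$ should be transported along trajectories that remain at density $1$, and the image $X_t(E_{in})$ under the (generalized) Lagrangian flow of $v$ should have the same volume as $E_{in}$.

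I would make this rigorous by applying Ambrosio's superposition principle to the weak solution of the continuity equation $\pa_t\rho + \div(\rho v)=0$: this produces a probability measure $\eta$ on absolutely continuous curves with $\rho(\cdot,t)\,dx = (e_t)_\#\eta$ and initial distribution $\chi_{E_{in}}\,dx$. The constraint $\rho(\cdot,t)\leq 1$ combined with the volume-preserving nature of the flow on the saturated set forces the evaluation map $e_t$ to be essentially injective on $\mathrm{supp}(\eta)$: any non-injectivity together with volume-preservation would produce an image density strictly greater than $1$ at points with multiple preimages. Disintegrating $\eta$ with respect to $e_0$ then produces a measurable family $x\mapsto X_t(x)$ with $\rho(\cdot,t)\,dx = (X_t)_\# (\chi_{E_{in}}\,dx)$, and one concludes $\rho(\cdot,t) = \chi_{\Omega_s(t)}$ with $\Omega_s(t) := X_t(E_{in})$ modulo null sets, giving (i) and (ii).

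The main obstacle is the rigorous application of Ambrosio's theorem together with the volume-preservation argument, since the formal velocity $v = \na\phi - \na p/\rho$ is singular where $\rho = 0$. I would handle this by first observing that $\na p = 0$ a.e.\ on $\{\rho<1\}$ (because $\{p>0\}\subseteq\{\rho=1\}$ and $\na p = 0$ a.e.\ on $\{p=0\}$ by Stampacchia's lemma), so $v$ reduces to $\na\phi \in L^\infty(0,T;W^{1,p}(\Omega))$ on the unsaturated part of $\mathrm{supp}(\rho)$ and to $\na\phi - \na p$ with $\na p \in L^2$ on the saturated part --- in particular $v\in L^2(d\rho)$, which is enough for the superposition principle. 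The attractiveness assumption $-\Delta\phi \geq 0$ on $\{\rho=1\}$ (a consequence of $\phi\leq 1$) is what ultimately prevents trajectories issued from $E_{in}$ from leaking into the intermediate-density regime $\rho\in(0,1)$; such a leakage, which would correspond to a strict loss of the saturation $\rho=1$ along a trajectory, would destroy the volume-preserving character of the push-forward and allow $\rho(\cdot,t)$ to take intermediate values.
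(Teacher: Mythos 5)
Your proposal is built on a Lagrangian/superposition argument for the full velocity $v=\na\phi-\na p/\rho$, combined with a volume-preservation heuristic. This is substantially different from the paper's proof, and I believe it has a genuine gap precisely at the point you yourself flag as the ``main obstacle.''

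The central unproved claim is that trajectories issued from $E_{in}$ cannot leak into the intermediate-density regime $\rho\in(0,1)$. You assert this follows from $-\Delta\phi\geq 0$ on $\{\rho=1\}$ and ``volume preservation of the push-forward,'' but no mechanism is given. The argument is essentially circular: leakage to intermediate density is the very phenomenon to be excluded, and saying that such a leakage ``would destroy the volume-preserving character'' presupposes the conclusion. Note also that on $\{\rho=1,\,p=0\}$ (where $\na p=0$) the velocity is $\na\phi$ with $\div\na\phi=\Delta\phi<0$, so the flow is \emph{compressing}, not volume-preserving, there; and in the region $\{0<\rho<1\}$, $\div\na\phi$ can be arbitrary. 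So ``the flow is volume-preserving on the saturated region'' is only correct on the open set $\{p>0\}$, and you give no argument that trajectories stay there. Similarly, the injectivity-of-$e_t$ claim via $\rho\leq 1$ is not a rigorous argument; non-injectivity is compatible with $\rho\leq 1$ unless you already know the Jacobian is $\geq 1$ along trajectories, which is again what you need to prove.

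The paper's proof works with a quite different and more elementary object: the characteristic flow of $\na\phi$ alone (not of $v$), and a one-sided differential inequality obtained by writing $\pa_t\rho+\div(\rho\na\phi)=\Delta p$ and using Lemma~\ref{lem:pos} to see that $\mu_t=\Delta p-\Delta\phi\,\chi_{\{p>0\}}\geq 0$, hence (using $-\Delta\phi=\rho-\phi\geq\rho-1$)
$$
\pa_t\rho+\na\rho\cdot\na\phi\;\geq\; -\rho(1-\rho).
$$
Pushing test functions forward along $\na\phi$-characteristics (Lemma~\ref{lem:w1}) gives a Gronwall bound $1-\rho(\psi(x,t_1),t_1)\geq(1-\rho(x,t_0))e^{-(t_0-t_1)}$ for $t_1<t_0$. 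The decisive step is \emph{backward in time}: if $\rho(\cdot,t_0)<1$ near $x_0$, then $\rho<1$ along the whole backward $\na\phi$-characteristic, so $p=0$ there, $\Delta p=0$ locally, and the equation reduces to pure transport by $\na\phi$; since $\rho_{in}\in\{0,1\}$ and $\rho<1$ at $t=0$ forces $\rho=0$ there, one concludes $\rho(\cdot,t_0)=0$ a.e.\ near $x_0$. Your proposal does not contain this backward-propagation mechanism, and without it I do not see how the leakage issue can be closed. If you want to salvage the Lagrangian viewpoint, the missing lemma is exactly this monotonicity of $1-\rho$ along $\na\phi$-characteristics; the superposition principle and injectivity considerations are not needed.
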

We point out that, as in \cite{CKY}, we do not exactly prove that $\rho= \chi_{\Omega_s(t)}$ a.e., since we do not know that $\pa \Omega_s(t)$ has zero Lebesgue measure. This is a very delicate (and important) regularity question that we do not address in this paper.
Nevertheless, together with Proposition \ref{prop:obs}, Theorem \ref{thm:charac} makes more precise the connection between the incompressible chemotaxis model without diffusion (\eqref{eq:weakHS} with $\mu=0$) and the Hele-Shaw free boundary problem \eqref{eq:introHS}.
This result also holds when $\phi$ is given by \eqref{eq:phiG1}, provided the external potential $\phi_e$ satisfies $-\Delta \phi_e\geq 0$.



\medskip

\section{The minimization problem }
\subsection{Existence of a minimizer}
In this section, we will prove that the JKO scheme defined by \eqref{eq:JKO}
 is well defined and we study the properties of the minimizers.
We recall that the energy functional is defined by
$$ J(\rho) =\mu \int \rho \log \rho\, dx -   \frac 1 {2}  \int_\Omega \rho \phi \, dx  $$
with $\mu\geq 0$
and for a given $\brho \in \P(\Omega)$, we consider the minimization problem
\begin{equation}\label{eq:min1}
\min\left\{  \frac {1}{2\tau} W_2^2(\rho,\brho) + J(\rho)\, ;\, \rho\in K\right\}
\end{equation}
where $K$ is the convex set defined by \eqref{eq:K}.

\medskip

We then have:
\begin{proposition}\label{prop:minchar}
The following holds for all $\brho\in \P(\Omega)$:
\item(i) The set of minimizers of \eqref{eq:min1} is non empty 
\item (ii) When $\mu=0$, any minimizer of \eqref{eq:min1}  is a characteristic function.
\item (iii) When $\mu>0$ then any minimizer $\rho^*$ of  \eqref{eq:min1} satisfies $\rho^*(x)>0$ a.e. in $\Omega$ and $\log(\rho^*) \in L^1(\Omega)$.
\end{proposition}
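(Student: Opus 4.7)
The plan is the direct method for (i), Euler--Lagrange analysis leveraging the $-\infty$ slope of the entropy for (iii), and an Euler--Lagrange-driven competitor argument (following \cite{JKM}) for (ii).

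For (i), take a minimizing sequence $\rho_n \in K$. Since $0 \le \rho_n \le 1$ on the bounded domain $\Omega$, a subsequence converges weakly-$*$ in $L^\infty$ (hence narrowly as probability measures) to some $\rho^* \in K$. Writing $F(\rho) := \tfrac{1}{2\tau}W_2^2(\rho,\brho)+J(\rho)$, lower semi-continuity of $W_2^2(\cdot,\brho)$ under narrow convergence, lower semi-continuity of the entropy (by convexity of $s\mapsto s\log s$ and weak $L^1$ convergence on the bounded range $[0,1]$), and \emph{continuity} of the interaction term---because $\rho\mapsto \phi=G\rho$ is compact from $L^2$ to $L^2$ by \eqref{eq:phiH1} and elliptic regularity---combine to give $F(\rho^*) \le \liminf F(\rho_n)$, hence $\rho^*$ is a minimizer.

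For (iii), suppose by contradiction that $A:=\{\rho^*=0\}$ has positive measure, and pick $\delta>0$ and $B_0\subset\{\rho^*\ge\delta\}$ with $|B_0|>0$ (such $\delta$ exists since $\int\rho^*=1$ and $\rho^*\le 1$). For small $t>0$ the function $\rho_t := \rho^* + t(\chi_A/|A| - \chi_{B_0}/|B_0|)$ lies in $K$, and a direct computation gives
$$\mu\!\int\rho_t\log\rho_t - \mu\!\int\rho^*\log\rho^* = \mu\bigl(t\log t - t\log|A|\bigr) + O(t),$$
so the right derivative of the entropy at $t=0^+$ equals $-\infty$, whereas the Wasserstein and interaction perturbations are both $O(t)$ (the former via the first-variation formula $\tfrac{d}{dt}\tfrac{1}{2\tau}W_2^2(\rho_t,\brho)|_{t=0}=\tfrac{1}{\tau}\!\int\vphi\,\eta$ with $\vphi$ a Kantorovich potential). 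Thus $F(\rho_t) < F(\rho^*)$ for small $t$, contradicting minimality, so $\rho^*>0$ a.e. The bound $\log\rho^*\in L^1$ then follows from the Euler--Lagrange equation
$$\mu\log\rho^*+\vphi/\tau-\phi^*+p=c\quad\text{on }\{\rho^*>0\},$$
where $p\ge 0$ is the Lagrange multiplier for $\rho\le 1$, vanishing on $\{\rho^*<1\}$; on $\{\rho^*=1\}$ the relation $\log 1=0$ yields $p = c-\vphi/\tau+\phi^*$, so $p\in L^\infty$, and consequently $\log\rho^*\in L^\infty\subset L^1$.

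For (ii) (with $\mu=0$), assume by contradiction $|\{0<\rho^*<1\}|>0$. The same Euler--Lagrange analysis, with $\mu=0$, yields $\vphi/\tau-\phi^*=c$ on $\{0<\rho^*<1\}$, so for any mass-preserving $\eta$ supported in this set,
$$\tfrac{d}{dt}F(\rho^*+t\eta)\Big|_{t=0}=\tfrac{1}{\tau}\!\int\vphi\,\eta - \!\int\phi^*\eta = c\!\int\eta = 0.$$
The second-order variation of the interaction is $-\tfrac12\int(G\eta)\eta = -\tfrac12\int(\phi_\eta^2+|\na\phi_\eta|^2)<0$ for $\eta\neq 0$, a strictly concave gain. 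Following \cite{JKM}, one then selects via a level-set argument two disjoint subsets of $\{0<\rho^*<1\}$ on which $\rho^*$ takes distinct intermediate values and performs a swap pushing the density toward $\{0,1\}$; the Brenier structure of the optimal transport map (which by the Euler--Lagrange identity reduces to $T(x)=x-\tau\nabla\phi^*(x)$ plus an additive constant on $\{0<\rho^*<1\}$) provides sharp control of the competing convex $W_2^2$ second-order cost, yielding a strict decrease of $F$ and the desired contradiction. The principal obstacle lies in this last step: once the first-order term has been killed via Euler--Lagrange, the second-order comparison pits the convex Hessian of $W_2^2$ against the strictly concave Hessian of the interaction, and the sign of the net Hessian is not obvious from a bare Taylor expansion. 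Exploiting the explicit Brenier form on $\{0<\rho^*<1\}$ to estimate the Wasserstein cost of local rearrangements is what makes the strict inequality go through, as developed in \cite{JKM} and recalled in the appendix.
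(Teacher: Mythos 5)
Your arguments for (i) and the positivity part of (iii) are essentially correct and match the paper's strategy: the direct method with $W_2^2$ lower semicontinuous, entropy lower semicontinuous by convexity, and the interaction term continuous because $\rho\mapsto\phi$ is compacting (this is exactly Lemma~\ref{lem:cont}); and the $-\infty$ right-slope of $t\log t$ for the positivity of $\rho^*$. For the $\log\rho^*\in L^1$ conclusion your write-up is circular as stated: you invoke the pointwise Euler--Lagrange identity $\mu\log\rho^*+\varphi/\tau-\phi^*+p=c$ to deduce $\log\rho^*\in L^\infty$, but writing down that identity already presupposes the differentiability of the entropy at $\rho^*$, i.e. $\log\rho^*\in L^1$. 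The paper simply cites \cite{OTAM} Lemma~8.6, where the issue is handled by working with one-sided difference quotients rather than derivatives.

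The genuine gap is in part (ii). You correctly identify, at the level of density perturbations, that once the first-order term is killed by Euler--Lagrange, the competition is between the strictly concave Hessian of the interaction and the (merely nonnegative) Hessian of $\rho\mapsto W_2^2(\rho,\bar\rho)$, and that a bare Taylor expansion does not settle the sign. But your proposed resolution --- that the Brenier form $T(x)=x-\tau\nabla\phi^*(x)$ on $\{0<\rho^*<1\}$ gives ``sharp control of the competing convex $W_2^2$ second-order cost'' --- is not what \cite{JKM} does, and it is not clear how to make it work. The actual key idea in \cite{JKM} (reproduced in Appendix~A) is to \emph{lift the problem from densities to transport plans}. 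Replacing $\rho\mapsto W_2^2(\rho,\bar\rho)$ by $\pi\mapsto\int_{\Omega\times\Omega}|x-y|^2\,d\pi$ makes the Wasserstein cost \emph{linear} in the plan variable, not merely convex. Meanwhile $\widetilde J(\pi)$ remains (strictly) concave because $D^2\widetilde J(\theta,\theta)=-\int G\,\theta^2\theta^2\le 0$ with equality only if $\theta^2\equiv 0$, and $\widetilde K$ is a convex subset of $\P(\Omega\times\Omega)$. So $\widetilde S$ is concave on a convex set and must attain its minimum at an extreme point, which forces the second marginal $\rho^*=\pi^2$ into $\{0,1\}$ a.e. The competing-Hessian problem you flagged is thus \emph{eliminated}, not controlled. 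Your ``level-set swap'' is also at the wrong level: the appendix constructs a two-sided perturbation $\pi\pm t\theta$ of the plan, with $\theta$ built from a measure-preserving rearrangement between two halves $E_1,E_2$ of the intermediate-density set $\{\alpha<\rho^*<1-\alpha\}$, chosen so that the first marginal of $\theta$ vanishes (keeping $\pi\pm t\theta\in\widetilde K$) while the second marginal $\theta^2\not\equiv 0$ makes the concavity strict. Without the lift to plans your argument does not close.
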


The existence of a minimizer (part (i)) follows from Lemma \ref{lem:cont} below.
The fact that any minimizer is a characteristic function when $\mu=0$ (part (ii)) is a very nice result which was first proved in \cite{JKM} in a slightly different setting;
we recall the proof in appendix for the sake of completeness.
Finally, we refer to \cite{OTAM} Lemma 8.6 for a proof of (iii). 

\begin{lemma}\label{lem:cont}
The functional $J$ is bounded below and lower semicontinuous   with respect to the weak convergence in $\mathcal P(\Omega)$.
\end{lemma}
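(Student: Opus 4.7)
My plan is to handle the two terms of $J(\rho)=\mu\int_\Omega \rho\log\rho\,dx - \tfrac{1}{2}\int_\Omega \rho\phi\,dx$ separately, using elementary entropy bounds for the diffusive part and elliptic regularity of \eqref{eq:phi0} for the nonlocal interaction.

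For the lower bound, I would invoke the convexity inequality $s\log s\geq -e^{-1}$ for $s\geq 0$ to get $\mu\int_\Omega \rho\log\rho\,dx \geq -\mu|\Omega|/e$, and combine the maximum principle bound $0\leq \phi\leq 1$ from \eqref{eq:phimax} with $\int_\Omega \rho\,dx=1$ to get $-\tfrac{1}{2}\int_\Omega \rho\phi\,dx \geq -\tfrac{1}{2}$. This yields $J(\rho)\geq -\mu|\Omega|/e -1/2$.

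For the lower semicontinuity, I would take $\rho_n\rightharpoonup \rho$ narrowly in $\P(\Omega)$ with $\liminf_n J(\rho_n)<\infty$ and write $J=\mu \mathcal{E} + \mathcal{F}$ with $\mathcal{E}(\rho):=\int_\Omega \rho\log\rho\,dx$ and $\mathcal{F}(\rho):=-\tfrac{1}{2}\int_\Omega \rho\phi\,dx$. Narrow lower semicontinuity of $\mathcal{E}$ is classical (for example by writing it as a supremum of continuous affine functionals via the Legendre transform $\rho\log\rho = \sup_\lambda (\lambda\rho - e^{\lambda-1})$). For the interaction term, since the minimization takes place in $K$ I may assume $\rho_n\leq 1$, hence $\int_\Omega \rho_n^2\,dx\leq 1$. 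Calderón--Zygmund estimates applied to \eqref{eq:phi0} then bound $\phi_n$ uniformly in $W^{2,2}(\Omega)$, and Rellich--Kondrachov provides strong $L^2$ compactness of $\phi_n$. Narrow convergence together with the uniform $L^\infty$ bound on $\rho_n$ yields weak $L^2$ convergence $\rho_n\rightharpoonup \rho$, which I would pass to the limit in \eqref{eq:phi0} to identify the strong $L^2$ limit of $\phi_n$ as the $\phi$ associated to $\rho$. A weak--strong pairing then delivers $\int_\Omega \rho_n\phi_n\,dx \to \int_\Omega \rho\phi\,dx$, so $\mathcal{F}$ is in fact continuous on $K$ under narrow convergence.

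The main obstacle is that the interaction term is a quadratic nonlocal functional of $\rho$, and is not automatically continuous under narrow convergence of measures in general. The essential point is that the incompressibility constraint $\rho\leq 1$ supplies enough a priori regularity to invoke elliptic estimates and compact embeddings, thereby upgrading narrow convergence of $\rho_n$ to strong convergence of $\phi_n$. Once the continuity of $\mathcal{F}$ on $K$ is established, adding it to the l.s.c. entropy $\mu \mathcal{E}$ completes the proof of lower semicontinuity of $J$; outside $K$, the same conclusion can be reached using equi-integrability from the finite entropy, but this refinement is not needed for the subsequent existence argument.
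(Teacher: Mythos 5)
Your proof is correct and follows essentially the same route as the paper: the lower bound comes from $s\log s\geq -e^{-1}$ and $0\leq\phi\leq 1$, the entropy term is l.s.c.\ by convexity of $s\mapsto s\log s$, and the interaction term is handled by upgrading narrow convergence of $\rho_n$ to strong convergence of $\phi_n$ via elliptic regularity for \eqref{eq:phi0} and compact embedding. The only cosmetic difference is that the paper works with $W^{2,p}$ for all $p<\infty$ and gets $C^\alpha$ convergence of $\phi_n$, while you settle for $W^{2,2}$ and strong $L^2$ convergence paired with weak $L^2$ convergence of $\rho_n$; both suffice.
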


\begin{proof}[Proof of Lemma \ref{lem:cont}]
Since $\phi \leq 1$ (see \eqref{eq:phimax}), $\rho\leq 1$ and $s\log s\geq -e^{-1}$ for $s\geq 0$, we have $J(\rho) \geq -C$ for all $\rho\in K$. Furthermore, for any sequence  $\rho_k$ which converges weakly to $\rho$ it is easy to show that the corresponding $\phi_k$, which are bounded in $W^{2,p}(\Omega)$ for all $p<\infty$, converge strongly to $\phi$ in $W^{1,p}$ and thus in $C^\alpha$. The lower semicontinuity of $J$ follows from the convexity of $s\mapsto s\log s$.
\end{proof}

\begin{remark}
While not necessary for our analysis, it is worth pointing out that the minimizer is unique for $\tau$ small enough.
This follows from the fact that the functional  $J$ is $\omega$-convex (although it is not $\lambda$-convex).
We refer to \cite{CKY} where the $\omega$-convexity is proved for the corresponding functional when $G(x,y)$
is the usual Newtonian potential.
The proofs are similar and we do not provide details here, since the result isn't necessary for our analysis.
Without using the uniqueness result, one can still define the JKO scheme \eqref{eq:JKO} by choosing for $\rho^n$  any element in the set of minimizers.
\end{remark}

\subsection{The Euler-Lagrange equation}
We recall that given two probability measures $\mu$ and $\nu$, we have the equivalent (dual) definitions of the Wasserstein distance:
\begin{align}
\frac 1 2 W_2^2(\mu,\nu) 
& =\min \left\{\frac 1 2\int_{\Omega\times\Omega} |x-y|^2 d\pi(x,y)\, ;\,  \pi \in \Pi(\mu,\nu)  \right\}\label{eq:W2K1} \\
& = \max \left\{
\int_\Omega \psi(x) \, d\mu + \int_\Omega \vphi(y) \, d\nu\, \Big|\, \psi,\vphi \in C^0(\overline \Omega), \; \psi(x)+\vphi(y) \leq\frac 1 2 |x-y|^2\right\}.
\label{eq:W2K}
\end{align}
The minimum in \eqref{eq:W2K1} is attained for some $\pi$ of the form $\pi = (id,T)_\#\mu$ with $T\# \mu=\nu$ and
$$ \frac 1 2 W_2^2(\mu,\nu)  = \frac 1 2 \int_\Omega |x-T(x)|^2 d\mu.$$
The maximum in \eqref{eq:W2K} is realized by a  pair of Lipschitz conjugate functions $(\psi,\psi^c)$ where
$$
\psi^c(y):= \inf_{x\in\Omega} \frac 1 2 |x-y|^2 - \psi(x).
$$  
Below, we will call Kantorovich potential from $\mu$ to $\nu$ any concave function $\psi$ such that
$(\psi,\psi^c)$ realizes the maximum in \eqref{eq:W2K}.
Note that we do not, in general, have uniqueness of the potential (unless one of the support of the two measures is the whole domain $\overline\Omega$), but we have 
$
T(x) = x-\na \psi(x) $ a.e. $x\in \Omega$, which  is uniquely determined on $\{\mu>0\}$ (and $T^{-1}(x) = x - \na \psi^c(x)$).

In the remainder of this section, we denote by  $\rho^*$ a minimizer of \eqref{eq:min1} provided by Proposition \ref{prop:minchar}. We start with:
\begin{proposition}\label{prop:EL}
For any $\mu\geq 0$,
let $\rho^*$ be a minimizer of \eqref{eq:min1} and $\phi^* (x)= \int_\Omega G(x,y)\rho^*(y)\, dy$ be the corresponding solution of \eqref{eq:phi0}.
There exists a  Kantorovich potential $\psi$ from $\rho^*$ to $\brho$ such that
\begin{equation}\label{eq:pressure1}
\int_{\Omega}\left(\mu \log \rho^* -\phi^*+\frac{\psi}{\tau}\right)(\rho-\rho^*)\geq 0
\end{equation}
for all $\rho \in K$.
\end{proposition}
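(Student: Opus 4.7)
The plan is to derive the variational inequality by perturbing the minimizer inside the convex set $K$. For any $\rho \in K$ and $\varepsilon \in [0,1]$, set $\rho_\varepsilon := (1-\varepsilon)\rho^* + \varepsilon \rho$, which belongs to $K$ by convexity of $K$. Minimality of $\rho^*$ yields
\begin{equation*}
0 \leq \frac{J(\rho_\varepsilon) - J(\rho^*)}{\varepsilon} + \frac{1}{2\tau\varepsilon}\bigl[W_2^2(\rho_\varepsilon, \brho) - W_2^2(\rho^*, \brho)\bigr]
\end{equation*}
for every $\varepsilon \in (0,1]$, and the proposition will follow by passing to the limit $\varepsilon \to 0^+$ and bounding the three terms appropriately.

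For the entropy part of $J$, I would use the convexity of $h(s) = s \log s$: the tangent-line bound $h(\rho_\varepsilon) \geq h(\rho^*) + \varepsilon(1+\log \rho^*)(\rho - \rho^*)$ provides an integrable lower bound for $\varepsilon^{-1}(h(\rho_\varepsilon) - h(\rho^*))$ (here I need $\log \rho^* \in L^1(\Omega)$, which is exactly Proposition~\ref{prop:minchar}(iii) when $\mu>0$), while the chord bound $h(\rho_\varepsilon) \leq (1-\varepsilon) h(\rho^*) + \varepsilon h(\rho)$ supplies a bounded upper majorant. Dominated convergence then gives
\begin{equation*}
\lim_{\varepsilon \to 0^+} \frac{\mu}{\varepsilon}\int_\Omega \bigl(\rho_\varepsilon \log \rho_\varepsilon - \rho^*\log\rho^*\bigr)\, dx = \mu \int_\Omega \log \rho^* \,(\rho - \rho^*)\, dx,
\end{equation*}
since $\int(\rho - \rho^*)\, dx = 0$. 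The interaction part is easier: using the symmetry of $G$, $\int \rho_\varepsilon \phi_\varepsilon\, dx$ is a quadratic polynomial in $\varepsilon$, whose derivative at $0$ is $2\int \phi^*(\rho - \rho^*)\, dx$. Hence the $\varepsilon \to 0^+$ limit of $\varepsilon^{-1}(J(\rho_\varepsilon) - J(\rho^*))$ equals $\int(\mu \log \rho^* - \phi^*)(\rho - \rho^*)\, dx$.

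The core step is the Wasserstein term, which requires an \emph{upper} bound on the incremental ratio matching a Kantorovich potential from $\rho^*$. Let $\psi_\varepsilon$ be a Kantorovich potential from $\rho_\varepsilon$ to $\brho$, normalized, say, by $\min_\Omega \psi_\varepsilon = 0$. By \eqref{eq:W2K} applied with the admissible pair $(\psi_\varepsilon, \psi_\varepsilon^c)$ against $\rho^*$, and with equality for $\rho_\varepsilon$, subtracting gives
\begin{equation*}
\tfrac{1}{2}\bigl[W_2^2(\rho_\varepsilon, \brho) - W_2^2(\rho^*, \brho)\bigr] \leq \int_\Omega \psi_\varepsilon (\rho_\varepsilon - \rho^*)\, dx = \varepsilon \int_\Omega \psi_\varepsilon (\rho - \rho^*)\, dx.
\end{equation*}
The potentials $\psi_\varepsilon$ are Lipschitz with constant controlled by $\mathrm{diam}(\Omega)$, hence equibounded and equicontinuous; by Arzelà–Ascoli I can extract a subsequence converging uniformly to some $\psi$, which is itself a Kantorovich potential from $\rho^*$ to $\brho$ by passing to the limit in the duality identity (using uniform convergence of $\psi_\varepsilon$ and of $\psi_\varepsilon^c$ together with weak convergence of $\rho_\varepsilon$ to $\rho^*$).

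Combining the upper bound on the Wasserstein increment with the computed limit of the $J$-increment and dividing the minimality inequality by $\varepsilon$, one obtains
\begin{equation*}
0 \leq \int_\Omega \Bigl(\mu \log \rho^* - \phi^* + \tfrac{\psi}{\tau}\Bigr)(\rho - \rho^*)\, dx,
\end{equation*}
which is exactly \eqref{eq:pressure1}. The main obstacle I expect is the interplay at step~3: justifying the compactness and identification of $\psi_\varepsilon \to \psi$ as a Kantorovich potential from $\rho^*$ (rather than from $\rho_\varepsilon$), which is needed because the minimality inequality must be divided by $\varepsilon$ \emph{before} passing to the limit; the entropy computation when $\mu>0$, which relies in a delicate way on $\log \rho^* \in L^1$, is a secondary but essential point.
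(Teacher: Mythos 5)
Your proof is correct and follows essentially the same route as the paper's: perturb along $\rho_\varepsilon=(1-\varepsilon)\rho^*+\varepsilon\rho$, bound the Wasserstein increment from above via Kantorovich duality with the potential $\psi_\varepsilon$ associated to $\rho_\varepsilon$, extract a uniformly convergent subsequence (the paper cites \cite{ButSant} for the identification of the limit as a Kantorovich potential for $\rho^*$), and differentiate $J$ using $\log\rho^*\in L^1$ from Proposition~\ref{prop:minchar}(iii). You spell out the dominated-convergence step for the entropy term, which the paper leaves implicit, but the overall strategy is identical.
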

The proof is similar to the proof of Lemma 3.1 in \cite{MRS}. We provide it here for the reader's convenience.
\begin{proof}
Given $\rho\in K$, we consider $\rho^{\delta}=\rho^*+\delta(\rho-\rho^*)$, which belongs to $K$ for all $\delta\in (0,1)$.
By optimality of $\rho^*$, we have:
$$J (\rho^{\delta})+\frac{1}{2\tau}W_2^2(\rho^{\delta},\brho)\geq J (\rho^*)+\frac{1}{2\tau }W_2^2(\rho^*,\brho).
$$
and so
\begin{equation}\label{eq:opt0}
\liminf_{\delta\to 0} \frac 1 \delta [ J (\rho^{\delta})-  J (\rho^*)] +\frac{1}{2\tau}\liminf_{\delta\to 0}   \frac 1 \delta[ W_2^2(\rho^{\delta},\brho)- W_2^2(\rho^*,\brho)]\geq 0.
\end{equation}
Let $\psi_\delta$ be a Kantorovich potentials from $\rho^\delta$ to  $\brho$. We have:
$$
\frac 1 2 W_2^2(\rho^{\delta},\brho) = \int_\Omega \psi_\delta(x) \rho^\delta(x) \, dx + \int_\Omega \psi_\delta^c(y) \brho(y)\, dy$$
$$
\frac 1 2 W_2^2(\rho^*,\brho) \geq \int_\Omega \psi_\delta(x) \rho^*(x) \, dx + \int_\Omega \psi_\delta^c(y) \brho(y)\, dy$$
and so
$$
\frac{1}{2 \delta }\left(W_2^2(\rho^{\delta},\brho)-W_2^2(\rho^*,\brho)\right)
\leq 
 \frac{1}{\delta} \int_\Omega \psi_\delta(x)( \rho^\delta(x)-\rho^*(x) )\, dx
= \int_\Omega \psi_\delta(x)( \rho(x)-\rho^*(x) )\, dx.
$$
We note that we can always assume that $\psi_\delta(x_0)=0$ for some $x_0\in\Omega$, in which case $\psi_\delta$ converges uniformly, up to a subsequence,  to a Kantorovich potential $\psi$ associated to $\brho$ and $\rho^*$ (see \cite{ButSant}).
We deduce
\begin{equation}\label{eq:opt1}
\liminf_{\delta\to 0 } \frac{1}{2 \delta }\left(W_2^2(\rho^{\delta},\brho)-W_2^2(\rho^*,\brho)\right)
\leq  \int_\Omega \psi (x)( \rho(x)-\rho^*(x) )\, dx.
\end{equation}
Furthermore, the fact that 
$\mu \log(\rho^*) \in L^1(\Omega)$ (see Proposition \ref{prop:minchar} (iii)) implies that $\delta \to J (\rho^{\delta})$ is differentiable at $0$ and that
$$  
\lim_{\delta\to 0 }\frac 1 \delta [J (\rho^{\delta})-J (\rho^*)] = \delta J(\rho^*) [\rho-\rho^*]= - \int_{\Omega}{\phi^*(\rho-\rho^*)}\, dx + \mu \int_\Omega \log \rho^* (\rho-\rho^*)\,. dx
$$
Together with \eqref{eq:opt0} and \eqref{eq:opt1} this implies \eqref{eq:pressure1}.

\end{proof}

\subsection{The velocity}
Let $T$ be the unique optimal transport map from  $\rho^*$ to $\brho$. We define the velocity $v(x)$ by
$$ v(x)  = \frac{x-T(x)}{\tau}  = \frac{\na \psi (x) }{\tau}.$$
We then have the following classical result:
\begin{proposition}\label{prop:W}
The velocity $v$ satisfies:
\begin{equation}\label{eq:vbd}
 \int_\Omega \rho^* v^2\, dx = \frac{1}{\tau^2} W_2^2(\rho^*,\brho) <\infty
 \end{equation}
and  for all test function $\zeta \in C^{2}(\Omega)$:
\begin{equation}\label{eq:vzeta}
\tau \int_\Omega \rho^*(x)v(x) \cdot \na \zeta(x) \, dx =  \int_\Omega [\rho^*(x)-\brho(x)]\, \zeta (x) \, dx + \mathcal O(\| D^2\zeta\|_\infty W_2^2(\rho^*,\brho)).
 \end{equation}
\end{proposition}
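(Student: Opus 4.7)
The plan is to derive both identities directly from the definition of $v$ and the defining property of the optimal transport map $T$, namely that $T\#\rho^*=\brho$ and $W_2^2(\rho^*,\brho)=\int_\Omega|x-T(x)|^2\rho^*(x)\,dx$.

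For \eqref{eq:vbd}, I would simply substitute $v(x)=(x-T(x))/\tau$ into $\int_\Omega\rho^* v^2\,dx$ and recognize the right-hand side as $\tau^{-2}W_2^2(\rho^*,\brho)$. Finiteness is automatic since $\Omega$ is bounded, so $W_2^2(\rho^*,\brho)\leq \mathrm{diam}(\Omega)^2$.

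For \eqref{eq:vzeta}, the main idea is a second-order Taylor expansion of $\zeta$ combined with the pushforward identity. Concretely, since $\zeta\in C^2(\Omega)$ I would write
$$
\zeta(T(x))=\zeta(x)+\na\zeta(x)\cdot(T(x)-x)+R(x),\qquad |R(x)|\leq \tfrac{1}{2}\|D^2\zeta\|_\infty |T(x)-x|^2,
$$
multiply by $\rho^*(x)$ and integrate. The pushforward property $T\#\rho^*=\brho$ turns $\int_\Omega\zeta(T(x))\rho^*(x)\,dx$ into $\int_\Omega\zeta(y)\brho(y)\,dy$, and the substitution $T(x)-x=-\tau v(x)$ in the first-order term produces $-\tau\int_\Omega\rho^*(x)v(x)\cdot\na\zeta(x)\,dx$. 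The remainder is bounded by
$$
\tfrac{1}{2}\|D^2\zeta\|_\infty\int_\Omega\rho^*(x)|T(x)-x|^2\,dx=\tfrac{1}{2}\|D^2\zeta\|_\infty\, W_2^2(\rho^*,\brho),
$$
which gives the $\mathcal O(\|D^2\zeta\|_\infty W_2^2(\rho^*,\brho))$ error after rearrangement.

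There is essentially no obstacle here: both assertions are purely algebraic consequences of the pushforward identity and the $C^2$ regularity of $\zeta$, once one has identified $v$ with $\na\psi/\tau$ (equivalently, with $(x-T(x))/\tau$). The only mild care required is in writing the Taylor remainder with a uniform constant $\|D^2\zeta\|_\infty$, which holds because $\zeta\in C^2(\Omega)$ and the segments joining $x$ to $T(x)$ stay inside $\Omega$ (or one works with a $C^2$ extension to $\R^d$), and in keeping track of signs when converting $T(x)-x=-\tau v(x)$.
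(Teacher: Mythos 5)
Your proposal is correct and follows essentially the same route as the paper's proof: both identities are derived from the pushforward property $T\#\rho^*=\brho$ together with a second-order Taylor expansion of $\zeta$ between $x$ and $T(x)$, with the remainder controlled by $\|D^2\zeta\|_\infty\int|x-T(x)|^2\rho^*\,dx = \|D^2\zeta\|_\infty W_2^2(\rho^*,\brho)$.
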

\begin{proof}
The definition of $T$ gives
$$ T\# \rho^* = \brho \mbox{ and } W_2^2(\rho^*,\brho) = \int |x-T(x)|^2 \rho^*(x)\, dx=\tau^2 \int |v(x)|^2 \rho^*(x)\, dx.$$
We get \eqref{eq:vbd} immediately and we can write:
\begin{align*}
\int(\rho^*(x)-\brho(x)) \zeta(x) \, dx 
& = \int [\zeta(x) -\zeta(T(x)) ] \rho^*(x) \, dx\\
& = \int \na \zeta(x) \cdot(x-T(x))  \rho^*(x) \, dx +\mathcal O\left( \| D^2\zeta\|_\infty  \int_\Omega |x-T(x)|^2 \rho^*(x)\, dx\right) \\
& = \tau\int\na\zeta (x) \cdot v(x)  \rho^*(x) \, dx +\mathcal O\left(  \| D^2\zeta\|_\infty  W_2^2(\rho^*,\brho)  \right)
 \end{align*}
and the result follows.
\end{proof}

\subsection{The pressure}
To end this section, we prove that the Euler-Lagrange equation \eqref{eq:pressure1} can be rewritten using a pressure term:
\begin{proposition}\label{prop:pressure}
Let $\rho^*$ be a minimizer of \eqref{eq:min1} and $\phi^* (x)= \int_\Omega G(x,y)\rho^*(y)\, dy$ be the corresponding solution of \eqref{eq:phi0}.
There exists $p\in H^1_{\rho^*}$ such that
\begin{equation}\label{eq:gradp}
- \na p(x) =
\begin{cases} 
  - \na \phi^* (x)+ v(x)  & \mbox{ a.e. in } \{\rho^*=1\}\\
 0 & \mbox{ a.e in } \{ \rho^*<1\}.
 \end{cases}
\end{equation}
and 
\begin{equation}\label{eq:EL5}
\rho^* v 
=  -\mu \na \rho^*+ \rho^*\na \phi^* -  \na p \quad \mbox{ a.e. in } \Omega
\end{equation}
In addition  $\mu \log \rho^*\in H^1$ and 
if $\mu=0$ then $-\na p= - \na \phi^* + v$ a.e  in $\{\rho^*>0\}$.
\end{proposition}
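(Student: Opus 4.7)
The plan is to use the Euler--Lagrange inequality \eqref{eq:pressure1} to extract a Lagrange multiplier $\lambda$ associated with the mass constraint $\int\rho=1$, and then to define $p$ as an explicit positive part. Set $\Phi := \mu \log \rho^* - \phi^* + \psi/\tau$; the inequality \eqref{eq:pressure1} says that $\rho^*$ minimizes $\rho \mapsto \int_\Omega \Phi\,\rho\,dx$ over the convex set $K$.

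First I will extract the multiplier by considering bulk rearrangements. For measurable sets $A\subset\{\rho^*>0\}$ and $B\subset\{\rho^*<1\}$ with $|A|=|B|$ small enough, the competitor $\rho=\rho^* - \eta\chi_A + \eta\chi_B$ lies in $K$ for small $\eta>0$, and \eqref{eq:pressure1} gives $\frac{1}{|A|}\int_A \Phi \le \frac{1}{|B|}\int_B \Phi$. Localizing via Lebesgue points produces a constant $\lambda\in\R$ with
\begin{equation*}
\Phi \le \lambda \text{ a.e.\ on } \{\rho^*>0\}, \qquad \Phi \ge \lambda \text{ a.e.\ on } \{\rho^*<1\},
\end{equation*}
so in particular $\Phi = \lambda$ a.e.\ on $\{0<\rho^*<1\}$. (When $\mu>0$, Proposition \ref{prop:minchar}(iii) gives $\rho^*>0$ a.e., so the first inequality holds a.e.\ on $\Omega$; when $\mu=0$, Proposition \ref{prop:minchar}(ii) gives $\rho^* = \chi_E$ for some set $E$.)

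Next I set $f := \lambda + \phi^* - \psi/\tau$ and $p := f^+$. Since $\phi^* \in W^{2,q}(\Omega)$ for every $q<\infty$ by Calderon--Zygmund and the Kantorovich potential $\psi$ is Lipschitz on $\overline\Omega$, $f$ belongs to $H^1(\Omega)$ and hence so does $p$. Clearly $p\ge 0$; on $\{\rho^*<1\}$ the inequality $\Phi\ge\lambda$ rearranges to $\mu\log\rho^* \ge f$, which forces $f<0$ there (because $\log\rho^*<0$ if $\mu>0$, and directly because $\rho^*=0$ if $\mu=0$); hence $p=0$ a.e.\ on $\{\rho^*<1\}$, so $p(1-\rho^*)=0$ a.e.\ and $p\in H^1_{\rho^*}$.

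To verify \eqref{eq:gradp} and \eqref{eq:EL5}: on $\{\rho^*=1\}$, the bound $\Phi\le\lambda$ gives $f\ge0$, so $p=f$ there and $\na p = \na\phi^* - v$. The delicate subset is $\{\rho^*=1\}\cap\{f=0\}$, where $\na p = 0$ a.e.\ (the standard fact that $\na(f^+)=0$ on $\{f=0\}$) and simultaneously $\na f = 0$ a.e., giving $v=\na\psi/\tau = \na\phi^*$; both sides of \eqref{eq:gradp} therefore vanish and the identity persists. I expect this to be the main technical point in the argument. Then \eqref{eq:EL5} on $\{\rho^*=1\}$ follows from $\na\rho^*=0$ a.e.\ (Stampacchia), which reduces the right-hand side to $\na\phi^*-\na p = v = \rho^* v$. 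On $\{\rho^*<1\}$ with $\mu>0$, differentiating the equality $\mu\log\rho^* = f$ yields $\mu\na\rho^* = \rho^*(\na\phi^* - v)$, and substituting into \eqref{eq:EL5} gives exactly $\rho^* v$; for $\mu=0$, $\rho^*=0$ on $\{\rho^*<1\}$ so both sides vanish. The regularity $\mu\log\rho^* \in H^1$ is obtained from the pointwise identity $\mu\log\rho^* = \min(f,0) = f - p$. Finally, when $\mu=0$ we have $\{\rho^*>0\}=\{\rho^*=1\}$, so the last claim is just \eqref{eq:gradp} restricted to this set.
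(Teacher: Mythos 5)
Your proof is correct and follows essentially the same strategy as the paper: both start from the Euler--Lagrange inequality \eqref{eq:pressure1}, identify the Lagrange multiplier $\lambda=\ell$ via a bathtub-type argument, and define $p=(\lambda+\phi^*-\psi/\tau)_+$, which the paper shows coincides with $(\ell-F)_+$. Two small differences are worth noting. First, you make the extraction of $\lambda$ explicit via bulk rearrangements $\rho = \rho^*-\eta\chi_A+\eta\chi_B$ and Lebesgue points (where the paper simply asserts \eqref{eq:L}); this is fine but, as written, admissibility of the competitor requires $A\subset\{\rho^*\geq\eta\}$ and $B\subset\{\rho^*\leq 1-\eta\}$, with $\eta\downarrow 0$ afterwards, which you should state. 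Second, for verifying \eqref{eq:gradp}--\eqref{eq:EL5} you do a casework argument invoking Stampacchia on $\{\rho^*=1\}\cap\{f=0\}$ and differentiating $\mu\log\rho^*=f$ on $\{\rho^*<1\}$; the paper instead uses \eqref{eq:Fell} to write $F\in H^1$ and gets the single identity $\na p=-\rho^*\na F$, which produces \eqref{eq:EL5} in one line. Your route is more elementary and equally valid, though note that the differentiation of $\mu\log\rho^*=f$ should formally be deferred until after you have established $\mu\log\rho^*=f-p\in H^1$; this is a matter of ordering, not a genuine gap.
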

\begin{proof}
We can proceed as in \cite{MRS}: 
If we introduce $F(x) = \mu \log \rho^* -\phi^*(x)+\frac{{\psi}(x)}{\tau} $, Proposition \ref{prop:EL} gives
\begin{equation}\label{eq:Fmin} \int_\Omega  F\rho^* \, dx \leq \int_{\Omega} F\rho\, dx \qquad \forall \rho\in K.
\end{equation}
This implies that there exists a Lagrange multiplier $\ell$ (associated to the mass constraint) such that
\begin{equation}\label{eq:L}
\begin{cases}
\rho^*=0 & \mbox{ a.e. in } \{F>\ell\} \\
0\leq \rho^*\leq 1 & \mbox{ a.e. in } \{F=\ell\} \\
\rho^*=1 & \mbox{ a.e. in } \{F<\ell\} 
\end{cases}
\end{equation}
We now define the pressure by
$$ p(x) = (\ell-F(x))_+.$$
which satisfies $(1-\rho^*(x))p(x)=0$ a.e. in $\Omega$. 

When $\mu=0$, we then get that $p\in H^1_{\rho^*}$ thanks to \eqref{eq:vbd} and \eqref{eq:phiH1}.

When $\mu>0$, we first note that 
\begin{equation}\label{eq:Fell}
\{F<\ell\} = \{ \ell+\phi^* - \psi/\tau >0\}.
\end{equation}
Indeed, we recall (Proposition \ref{prop:minchar} (iii)) that $\rho^*>0$ a.e. in $\Omega$ and so $F \leq \ell$ a.e. (this is also evident here since we have $F=-\infty$ a.e. in $\{\rho^*=0\}$ and so \eqref{eq:L} yields $|\{F> \ell\} |=0$). Furthermore, in $\{F<\ell\}$ we have $\rho^*=1$ and thus $F = -\phi^*(x)+\frac{{\psi}(x)}{\tau} <\ell$ while  in $\{F=\ell\}$ we have $-\phi^*(x)+\frac{{\psi}(x)}{\tau} = \ell  -\mu \log\rho^* >\ell   $.
Using \eqref{eq:Fell}, it is now straightforward to show that 
$$ \mu\log\rho^* = -(-\ell-\phi^*+\psi/\tau)_+ \in H^1\quad \mbox{ and }\quad
  p = \left(\ell  +\phi^*-\psi/\tau  \right)_+ \in H^1.$$


In particular $F\in H^1$ and so $\na F \chi_{\{F<\ell\}} = \na F \chi_{\{F\leq\ell\}}$. We deduce (using \eqref{eq:L}):
$$ 
\na p = - \na F \chi_{\{F<\ell\}} = - \rho^*\na F \chi_{\{F<\ell\}}= - \rho^*\na F \chi_{\{F\leq\ell\}}= - \rho^*\na F
$$ 
Equalities \eqref{eq:gradp} and \eqref{eq:EL5} follow from the definition of $F$.



\end{proof}

Finally, we prove the following lemma, which will be crucial to the derivation of the obstacle problem for the pressure $p$:
\begin{lemma}\label{lem:pressureineq}
Let $\rho^*$ be a minimizer of \eqref{eq:min1},  $\phi^* (x)$ the corresponding solution of \eqref{eq:phi0} and $p$ the pressure given by Proposition \ref{prop:pressure}. There holds:
\begin{equation}\label{eq:pressureobst2}
\int_\Omega \na \zeta(x)\cdot [ \na p(x)- \na \phi^*(x)]   dx \leq 0 \qquad \forall \zeta \in H^1_{\rho^*}.
\end{equation}
\end{lemma}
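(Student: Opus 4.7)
The strategy is to first use the explicit formula for $p$ from Proposition~\ref{prop:pressure} to reduce the claimed inequality to an inequality involving the Kantorovich potential $\psi$, and then exploit the semi-concavity of $\psi$ and the Monge--Amp\`ere identity to conclude.

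Recall that the proof of Proposition~\ref{prop:pressure} gives $p = (\ell - F)_+$ with $F = \mu \log \rho^* - \phi^* + \psi/\tau \in H^1(\Omega)$, and $\mu \log \rho^* \in H^1(\Omega)$ vanishes on $\{\rho^* = 1\}$ so that $\nabla(\mu \log \rho^*) = 0$ a.e.\ on $\{\rho^* = 1\}$ by standard $H^1$-calculus. On $\{p>0\}\subset \{\rho^* = 1\}$ one has $\nabla p = -\nabla F = \nabla \phi^* - \nabla \psi /\tau$ a.e., while on $\{p = 0\}\cap\{\rho^* = 1\}$ the identity $(\ell - F)_+ = 0$ combined with \eqref{eq:L} forces $F = \ell$, so $\nabla F = 0$ on this level set and hence $\nabla \phi^* = \nabla \psi/\tau$ a.e.\ there. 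Since any $\zeta \in H^1_{\rho^*}$ satisfies $\nabla \zeta = 0$ a.e.\ on $\{\rho^* < 1\}$, these three facts combine to yield
\begin{equation*}
\int_\Omega \nabla \zeta \cdot (\nabla p - \nabla \phi^*) \, dx = -\frac{1}{\tau} \int_\Omega \nabla \zeta \cdot \nabla \psi \, dx,
\end{equation*}
so the lemma reduces to proving $\int_\Omega \nabla \zeta \cdot \nabla \psi \, dx \ge 0$ for all $\zeta \in H^1_{\rho^*}$.

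To establish this I use the Brenier structure: since $T(x) = x - \nabla \psi(x)$ is the optimal transport map from $\rho^*$ to $\brho$, it is the gradient of the convex function $u(x) = \tfrac{1}{2}|x|^2 - \psi(x)$. Consequently $\psi$ is semi-concave, $D^2 \psi \le I$ as distributions, and the singular part of $\Delta \psi$ is non-positive. On $\{\rho^* = 1\}$ the Monge--Amp\`ere identity $\rho^* = \brho(T)\det(DT)$ combined with $\brho \le 1$ gives $\det(DT) \ge 1$ a.e.; since $DT = I - D^2_{\mathrm{Alex}}\psi$ is positive semi-definite, the arithmetic--geometric mean inequality yields $\operatorname{tr}(DT) \ge d\,\det(DT)^{1/d} \ge d$, i.e.\ $\Delta_{\mathrm{Alex}}\psi \le 0$ a.e.\ on $\{\rho^* = 1\}$. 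Thus $\Delta \psi$, interpreted as a signed Radon measure on $\Omega$, is non-positive on the set where $\zeta$ is supported. Integrating by parts formally gives
\begin{equation*}
\int_\Omega \nabla \zeta \cdot \nabla \psi \, dx = -\int_\Omega \zeta \, d(\Delta \psi) + \int_{\partial \Omega} \zeta \, (\nabla \psi \cdot n) \, d\mathcal{H}^{d-1},
\end{equation*}
where the bulk term is non-negative (since $\zeta \ge 0$ is supported where $\Delta \psi \le 0$) and the boundary term is non-negative because $\nabla \psi \cdot n = (x - T(x))\cdot n \ge 0$ on $\partial \Omega$ (as $T(x)\in \overline{\Omega}$ for $x\in \partial \Omega$).

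The main obstacle will be making the integration by parts rigorous: $\psi$ is only Lipschitz, $\Delta \psi$ is a signed Radon measure whose negative part may be large, and the boundary trace of $\nabla \psi$ must be interpreted in the Anzellotti BV sense. I plan to handle this by truncating $\zeta$ (observing that $\min(\zeta, k)\in H^1_{\rho^*}$ with $\nabla\min(\zeta,k)\to \nabla \zeta$ in $L^2$), then mollifying $\psi$ into smooth semi-concave approximations $\psi_\e$ with $D^2 \psi_\e \le I$ (these inherit $\Delta_{\mathrm{Alex}}\psi_\e \le 0$ in the limit after a localization on $\{\rho^* = 1\}$), and passing to the limit $\e \to 0$ with the sign of both the bulk measure and the boundary trace preserved.
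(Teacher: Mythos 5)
Your reduction of \eqref{eq:pressureobst2} to the inequality $\int_\Omega \nabla\zeta\cdot\nabla\psi\,dx \geq 0$ for $\zeta\in H^1_{\rho^*}$ is correct; it is essentially re-deriving the identity \eqref{eq:gradp}, and the paper's own argument also boils down to exactly this inequality (it is inequality \eqref{eq:proj-v} combined with $\nabla\zeta(1-\rho^*)=0$). Where you diverge is in the proof of this core inequality, and here the route you take is both genuinely different and, as written, incomplete.

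The paper proves $\int\nabla\zeta\cdot\nabla\psi\,\rho^*\,dx\geq 0$ by observing that the McCann interpolant $\rho_t = T_t\#\rho^*$ stays in $K$ (convexity of $M\mapsto|\det M|^{-1}$), so $\int\zeta(\rho_t-\rho^*)\,dx\leq 0$, and differentiating at $t=0$. This is a three-line argument that never introduces $\Delta\psi$ as a measure and never needs integration by parts. Your route instead uses the semi-concavity of $\psi$, the Monge--Amp\`ere identity, and the AM--GM inequality to get $\Delta_{\mathrm{Alex}}\psi\leq 0$ a.e.\ on $\{\rho^*=1\}$ together with non-positivity of the singular part of $\Delta\psi$ everywhere. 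These sign facts are correct, but the integration by parts you need to exploit them is left at the level of a plan, and the plan as stated does not close. Specifically: (i) mollifying $\psi$ into $\psi_\e$ does \emph{not} give $\Delta\psi_\e\leq 0$ on $\{\rho^*=1\}$, because $\Delta\psi_\e(x)$ averages $\Delta\psi$ over a ball around $x$, which for $x\in\{\rho^*=1\}$ necessarily includes points of $\{\rho^*<1\}$ where the Alexandrov Laplacian may be positive (up to $d$); the vague ``localization on $\{\rho^*=1\}$'' is not a mollification operation and is not explained. A rigorous route would instead have to justify the pairing $\langle\tilde\zeta,\Delta\psi\rangle$ directly for the quasi-continuous representative $\tilde\zeta$ of $\zeta\in H^1\cap L^\infty$, using that $\nabla\psi\in BV$ so $\Delta\psi$ does not charge $\mathcal H^{d-1}$-null (hence capacity-zero) sets — a nontrivial fact you would need to cite or prove. (ii) The claimed sign $\nabla\psi\cdot n = (x-T(x))\cdot n\geq 0$ on $\partial\Omega$ uses $T(x)\in\overline\Omega$, but this inequality holds in general only if $\Omega$ is convex; the paper only assumes $\Omega$ bounded and smooth. (The paper's displacement-interpolation argument implicitly requires the interpolant to remain supported in $\overline\Omega$ as well, so this is arguably a shared tacit assumption, but it is a genuine restriction in your argument that you should flag rather than assert.) In short: the reduction is right, the heuristic estimates are right, but the bridge from heuristics to a proof — the measure-theoretic integration by parts, the behavior under mollification, and the boundary sign — is where the paper's simple admissibility-of-$\rho_t$ argument does in one stroke what your approach leaves as substantial unfinished technical work.
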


\begin{proof}

We recall that $T$ is the optimal transportation map such that $T\#\rho^* =\bar\rho$ and we denote
$$ T_t(x) = (1-t)x+t T(x), \qquad \rho_t(x) = T_t \#\rho^*.$$
We point out that $\rho_0=\rho^*$ and $\rho_1=\bar \rho$, so compared to the interpolation $\tilde \rho^\tau(t)$ defined by \eqref{eq:interp}, this $\rho_t$ flows backward.
We have in particular $\rho_t \in \mathcal P(\Omega)$ and since $M\mapsto |\det M|^{-1}$ is convex on the set of positive definite matrices, we have
$$  \rho_t(x) =   \frac{\rho^*}{|\det \na T_t|} \circ T_t^{-1} \leq \left( (1-t) \rho^* + \frac{t\rho^*}{|\det \na T|} \right) \circ T_t^{-1} .$$
Using the fact that $ \bar \rho (T(x)) =  \frac{\rho^*(x)}{|\det \na T(x)|} \leq 1$ a.e. $x\in\Omega$, we deduce 
 $  \rho_t(x) \leq 1 $ a.e. and so $\rho_t\in K$.

Next, we note that for any text function $\zeta\in H^1_{\rho^*}$, we have 
$$ \int_\Omega \zeta(x) ( \rho(x)-\rho^*(x)) \, dx =  \int_\Omega \zeta(x)( \rho(x)-1) \, dx \leq 0 \qquad \forall \rho\in K.$$
In particular, we can write
$$ \int_\Omega \zeta(x) ( \rho_t(x)-\rho^*(x)) \, dx= \int_\Omega [\zeta(T_t(x))-\zeta(x)] \rho^*(x) \, dx \leq 0 .
$$
Dividing by $t>0$ and passing to the limit $t\to0$ we get
\begin{equation}\label{eq:proj-v}
\int_\Omega \na \zeta(x) \cdot (T(x)-x) \rho^*(x)  dx  
= -\tau \int_\Omega \na \zeta(x) \cdot \rho^*(x)  v (x)  dx \leq 0.
\end{equation}
Since $\na \zeta$ satisfies $\na \zeta (1-\rho^*) =0$ a.e., we can now use \eqref{eq:gradp} to get \eqref{eq:pressureobst2}.
\end{proof}

\begin{remark}
With the notations of Appendix \ref{sec:projection}, inequality \eqref{eq:proj-v} says that $-v \in C(\rho^*)$ (i.e. $-v$ is admissible for $\rho^*$).
A similar argument, with $\zeta \in H^1_{\bar \rho}$, can be used to prove that $v\circ T^{-1} \in C(\bar \rho)$.
\end{remark}

\begin{remark}\label{rmk:pressureineq}
Since $p\in H^1_{\rho^*}$, we can take $\zeta=p$ in \eqref{eq:pressureobst2} to find:
\begin{equation}\label{eq:pressureobst}
\int_\Omega \na p(x)\cdot [ \na p(x)- \na \phi^*(x)]   dx  \leq 0.
\end{equation}
This inequality will be all that is really needed to prove that $p^\tau$ converges strongly to the solution of the variational inequality \eqref{eq:obst0}. Of course, Lemma \ref{lem:pressureineq} is stronger than that. It implies in particular that $p(\Delta p-\Delta\phi^*)\geq0$ and that $p$  is the subsolution for the obstacle problem
\begin{equation}\label{eq:obtsq}
\begin{cases}
q \in H^1_{\rho^*} \\[5pt]
\displaystyle \int_\Omega [ \na q(x)- \na \phi^*(x)] \cdot  [\na q(x)-\na \zeta(x)]  dx \leq 0 \qquad  \forall \zeta \in H^1_{\rho^*}.
\end{cases}
\end{equation}
If we denote by $q^*$  the unique solution of \eqref{eq:obtsq},
then \eqref{eq:pressureobst2} implies
$$ 0\leq p(x)\leq q^*(x) \mbox{ in } \Omega.$$
(to see this, take $\zeta = (p-q^*)_+$ in \eqref{eq:pressureobst2} and $\zeta = q^*+(p-q^*)_+$ in \eqref{eq:obtsq}).

\end{remark}

\medskip

\section{Definition of $\rho^{\tau},\widetilde \rho^{\tau}$ and a priori estimates}
In this section we derive  classical a priori estimates which will be used to pass to the limit $\tau\to0$.
We recall that we define the sequence $\rho^n$ by successive applications of the minimization problem \eqref{eq:JKO} 
starting with $\rho^0=\rho_{in}$.
We also define the velocity
$$ v^n (x)  = \frac{x-T^n (x)}{\tau}  = \frac{\na \psi ^n (x) }{\tau}$$
and the pressure $p^n$, given by Proposition \ref{prop:pressure}, which satisfies:
\begin{equation}\label{eq:pressuren} 
\rho^n v^n  = -\mu \na \rho^n+ \rho^n \na \phi^n  - \na p^n , \qquad p^n  \in H^1_{\rho^n},\quad \phi^n (x)= \int_\Omega \rho ^n(y) G(x,y)\, dy
\end{equation}
(of course, $\rho^n$, $p^n$, $T^n$, ...  depend on   $\tau$, even though we do not indicate this dependence).

Finally, we define the piecewise constant interpolations 
$\rho^{\tau}(x,t)$, $p^{\tau}(x,t)$, $v^{\tau}(x,t)$ and $\phi^{\tau}(x,t)$ by
\begin{equation}\label{eq:inter}
\begin{aligned}
\rho^{\tau}(t)& :=\rho^{n+1} &\text{ for all } \ \ t\in[n\tau,(n+1)\tau)\\
p^{\tau}(t)& :=p^{n+1}  & \text{ for all } \ \ t\in[n\tau,(n+1)\tau)\\
v^{\tau}(t)& :=v^{n+1} & \text{ for all } \ \ t\in[n\tau,(n+1)\tau)\\
\phi^{\tau}(t) & :=\phi^{n+1}&  \text{ for all } \ \ t\in[n\tau,(n+1)\tau).
\end{aligned}\end{equation}
We also  define the momentum 
$$E ^{\tau}(x,t)=\rho^{\tau}(x,t) v^{\tau}(x,t).$$
Using the results from the previous section, we can easily prove:
\begin{proposition}
For any smooth test function $\zeta(x,t) $ compactly supported in $\Omega\times [0,T)$
and given $N$ such that $N\tau \geq T$, there holds:
\begin{align}
\int_0^\infty \int_\Omega E^{ \tau}  \cdot \na \zeta \, dx \, dt
&  = -\int_\Omega \rho_{in} (x) \zeta(x,0) \, dx
 -\int_0^\infty  \int_\Omega  \rho^{\tau}(x,t) \pa_t \zeta(x,t) \, dx\,  dt \nonumber \\
 & \qquad
 + \mathcal O \left(\|D^2\zeta\|_{L^\infty(\Omega \times\R_+)} \sum_{n=0}^N W_2^2(\rho^n,\rho^{n-1}) 
+\tau \| \pa_t\zeta\|_\infty + \tau T \| \pa^2_t \zeta\|_\infty
 \right)\label{eq:weakepstau}
 \end{align}

For any smooth vector field $\xi(x,t)$ satisfying $\xi\cdot n=0$ on $\pa \Omega$, there holds:
\begin{equation}\label{eq:pressureepstau}\int_0^\infty \int_\Omega E^{ \tau} \cdot \xi\, dx\, dt   = \int _0^\infty \int_\Omega (\rho^{ \tau} \na \phi^{ \tau} \cdot \xi  + \mu \rho^\tau \div \xi + p^{ \tau}\div \xi) \, dx dt.
\end{equation}
\end{proposition}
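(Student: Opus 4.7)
The second identity \eqref{eq:pressureepstau} is essentially a pointwise-in-$n$ identity. I would obtain it by testing the pressure equation \eqref{eq:pressuren} (provided by Proposition \ref{prop:pressure}) against $\xi(\cdot,t)$ and integrating by parts in $x$, using the boundary condition $\xi\cdot n=0$ on $\pa\Omega$ to kill the boundary contributions from $-\mu\na\rho^{n+1}$ and $-\na p^{n+1}$. Since $E^\tau$, $\rho^\tau$, $\phi^\tau$ and $p^\tau$ are all piecewise constant on the same partition $\{[n\tau,(n+1)\tau)\}$, the subsequent integration in $t$ on each slab introduces no approximation at all and the identity holds without error terms.

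For \eqref{eq:weakepstau}, the plan is to combine the Wasserstein test \eqref{eq:vzeta} from Proposition \ref{prop:W} with a discrete Abel summation. I would introduce the slab average
$$\bar\zeta_n(x):=\frac{1}{\tau}\int_{n\tau}^{(n+1)\tau}\zeta(x,t)\,dt,$$
so that on each slab $\int_{n\tau}^{(n+1)\tau}\!\int_\Omega E^\tau\cdot\na\zeta\,dx\,dt=\tau\int_\Omega \rho^{n+1}v^{n+1}\cdot\na\bar\zeta_n\,dx$. Applying \eqref{eq:vzeta} with $\rho^*=\rho^{n+1}$, $\brho=\rho^n$ and test function $\bar\zeta_n$ (whose Hessian is controlled by $\|D^2\zeta\|_\infty$) yields
$$\tau\int_\Omega\rho^{n+1}v^{n+1}\cdot\na\bar\zeta_n\,dx=\int_\Omega(\rho^{n+1}-\rho^n)\bar\zeta_n\,dx+\mathcal{O}\bigl(\|D^2\zeta\|_\infty\, W_2^2(\rho^{n+1},\rho^n)\bigr).$$
Summing from $n=0$ to $N$ large enough that $\bar\zeta_{N-1}=0$ (possible by compact support of $\zeta$) and applying Abel summation gives
$$\sum_{n=0}^{N-1}\int_\Omega(\rho^{n+1}-\rho^n)\bar\zeta_n\,dx=-\int_\Omega\rho_{in}\,\bar\zeta_0\,dx-\sum_{n=1}^{N-1}\int_\Omega\rho^n(\bar\zeta_n-\bar\zeta_{n-1})\,dx.$$

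The last step is to convert these discrete expressions into the continuous ones appearing in \eqref{eq:weakepstau}. For the boundary term I would Taylor-expand $\bar\zeta_0=\zeta(\cdot,0)+\mathcal{O}(\tau\|\pa_t\zeta\|_\infty)$ and use $\int_\Omega\rho_{in}\,dx=1$. For the bulk sum, the piecewise-constant structure yields directly $\int_0^\infty\!\int_\Omega \rho^\tau\pa_t\zeta\,dx\,dt=\sum_{n=1}^{N}\int_\Omega\rho^n(\zeta^n-\zeta^{n-1})\,dx$, while $\bar\zeta_n-\bar\zeta_{n-1}=\zeta^n-\zeta^{n-1}+\mathcal{O}(\tau^2\|\pa_t^2\zeta\|_\infty)$ by a standard midpoint-quadrature estimate. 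Mass conservation $\int_\Omega\rho^n\,dx=1$ then lets these per-slab errors accumulate to $\mathcal{O}(N\tau^2\|\pa_t^2\zeta\|_\infty)=\mathcal{O}(T\tau\|\pa_t^2\zeta\|_\infty)$. The only delicate point is bookkeeping these Taylor errors so they aggregate exactly in the stated $\mathcal{O}$-form; using the slab averages $\bar\zeta_n$ rather than the endpoint values $\zeta^{n+1}$ in the first step is what avoids an unwanted extra error of type $\|\pa_t\na\zeta\|_\infty$ and matches the statement verbatim.
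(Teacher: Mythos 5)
Your proposal is correct and follows essentially the same route as the paper: apply \eqref{eq:vzeta} with $\rho^*=\rho^{n+1}$, $\brho=\rho^n$, perform a discrete Abel summation, and then Taylor-expand the discrete time differences to identify the error terms (the paper applies \eqref{eq:vzeta} at each fixed $t$ and then integrates over the slab, which after integration is exactly your slab-average formulation). The derivation of \eqref{eq:pressureepstau} by testing \eqref{eq:pressuren} against $\xi$ and integrating by parts with $\xi\cdot n=0$ on $\pa\Omega$ is also precisely what the paper does.
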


\begin{proof}

We recall that \eqref{eq:vzeta} implies
$$
\tau \int \rho^n  (x)v ^n (x) \cdot \na \zeta (x,t) \, dx =  \int [\rho ^n(x)-\rho ^{n-1}(x)]\, \zeta (x,t) \, dx + \mathcal O(\| D^2\zeta(\cdot,t) \|_\infty W_2^2(\rho^n,\rho^{n-1})) \qquad \forall n\geq 1
$$
which gives in particular for $n\geq 1$ and $t\in [(n-1)\tau,n \tau)$:
\begin{equation}\label{eq:weakn}
 \tau \int E^{\tau}(x,t) \cdot \na \zeta (x,t) \, dx = \int [\rho^{\tau}(x,t) -\rho^{\tau}(x,t-\tau) ]\, \zeta (x,t) \, dx + \mathcal O(\| D^2\zeta \|_\infty W_2^2(\rho^n,\rho^{n-1})).
 \end{equation}
Integrating with respect to $t\in [(n-1)\tau,n \tau)$ and adding these equalities for $n=1, \dots N$, we easily obtain
\begin{align}
\int_0^\infty \int_\Omega E^{\tau} \cdot \na \zeta \, dx \, dt
&  = -\int_\Omega \rho_{in} \frac 1 \tau \int_0^\tau \zeta(t)\, dt \, dx
 -\int_0^\infty  \int_\Omega  \rho^{\tau}(x,t) \frac{\zeta(x,t+\tau) - \zeta(x,t)}\tau\, dx\,  dt \nonumber \\
 & \qquad
 + \mathcal O \left(\|D^2\zeta\|_{L^\infty(\Omega \times (0,\infty))} \sum_{k=0}^N W_2^2(\rho^n,\rho^{n-1}) 
 \right)\label{eq:weakepstau1}
 \end{align}
 and  \eqref{eq:weakepstau} follows.
 Equation \eqref{eq:pressureepstau} follows   from  \eqref{eq:pressuren}.
\end{proof}

We recognize in \eqref{eq:weakepstau} and \eqref{eq:pressureepstau} approximations of the weak equations \eqref{eq:weak11}-\eqref{eq:weak12}. 
Our next task is thus to derive a priori estimates that will allow us to pass to the limit in  \eqref{eq:weakepstau} and \eqref{eq:pressureepstau}.
This limit will be easier to manage with the help of the continuous in time interpolation which we introduce now:

\medskip

\paragraph{\bf Continuous interpolation.} 
Interpolating between $\rho^{n}$ and $\rho^{n+1}$ along the natural geodesic for the Wasserstein distance, we
define:
\begin{equation}\label{eq:interp}
\widetilde{\rho}^{\tau}(t)=\left(\frac{t-n\tau}{\tau}(Id-T^{n+1})+T^{n+1}\right){\#}\rho ^{n+1} \qquad \mbox{for} \ \ t\in[n\tau,(n+1)\tau)
\end{equation}
where we recall that  $T^{n+1}$ is the optimal transport from $\rho^{n+1}$ to $\rho^{n}$. We define $\widetilde{v} ^{\tau}(t,\cdot)$ as the unique velocity field such that $\widetilde{v} ^{\tau}(t,\cdot) \in Tan_{\widetilde{\rho} ^{\tau}}\mathcal{P}_2(\R^d)$ and $(\widetilde{\rho}^{\tau},\widetilde{v}^{\tau})$ satisfy the continuity equation, that is:
\[
\widetilde{v} ^{\tau}=v ^{\tau}\circ \left(\frac{t-n\tau}{\tau}(Id-T^{n+1})+T^{n+1}\right)^{-1}.
\]
Finally, we defind the momentum
\[
\widetilde{E} ^{\tau}:=\widetilde{v} ^{\tau}\,\widetilde{\rho}^{\tau}.
\]
In particular we have
\[
\partial_t\widetilde{\rho}^{\tau}+\nabla \cdot \widetilde{E}^{\tau}=0
\]
in the sense of distribution on $\Omega_T$.

\subsection{A priori estimates}
We start with the following lemma:
\begin{lemma}\label{lem:unifestimates1}
There exists a constant $C$ depending only on $J(\rho_{in})$ such that for all   $\tau>0$ we have:
\item[(i)]  $J(\rho ^{N}) \leq C$ and $ \sum_{n=0} ^N W_2^2(\rho^n ,\rho^{n-1} ) \leq 2 C \tau $ for all $N\geq 0$ 
\item[(ii)]  $W_2(\rho^{\tau}(t),\rho^{\tau}(s))\leq C\sqrt{t-s+\tau}$ for any $0\leq s\leq t\leq T$  and $\| v^{\tau}\|_{L^2_{\rho^{\tau}}}\leq C$
\item[(iii)] $W_2(\widetilde{\rho}^{\tau}(t),\widetilde{\rho}^{\tau}(s))\leq C\sqrt{t-s}$ for any $0\leq s\leq t\leq T$ and $\| \widetilde v^{\tau}\|_{L^2_{\widetilde\rho^{\tau}}}\leq C$
\item[(iv)] 
$\| E^{\tau}\|_{L^2(\Omega_T)}\leq C $ and $ \| \widetilde E^{\tau}\|_{L^2(\Omega_T)} \leq C $.
\end{lemma}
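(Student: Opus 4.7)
The plan rests on the classical energy-dissipation inequality for JKO schemes. Since $\rho^{n-1}$ is itself a valid competitor in the variational problem \eqref{eq:JKO} defining $\rho^n$, optimality immediately gives
$$\frac{1}{2\tau} W_2^2(\rho^n,\rho^{n-1}) + J(\rho^n) \leq J(\rho^{n-1}) \qquad \forall n\geq 1.$$
Telescoping from $n=1$ to $N$ and using the uniform lower bound $J\geq -C_0$ on $K$ established in Lemma \ref{lem:cont} yields
$$J(\rho^N) + \frac{1}{2\tau}\sum_{n=1}^N W_2^2(\rho^n,\rho^{n-1}) \leq J(\rho_{in}),$$
and $J(\rho_{in})$ is finite because $\rho_{in}\leq 1$ makes $\int \rho_{in}\log\rho_{in}\,dx$ bounded and $\int \rho_{in}\phi_{in}\,dx$ is controlled by \eqref{eq:phimax}. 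This proves (i) with $C:=J(\rho_{in})+C_0$.

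For (ii), suppose $s\in[k\tau,(k+1)\tau)$ and $t\in[n\tau,(n+1)\tau)$ with $s<t$, so that $\rho^\tau(s)=\rho^{k+1}$ and $\rho^\tau(t)=\rho^{n+1}$. The triangle inequality for $W_2$ combined with Cauchy--Schwarz gives
$$W_2(\rho^\tau(t),\rho^\tau(s)) \leq \sum_{j=k+2}^{n+1} W_2(\rho^j,\rho^{j-1}) \leq \sqrt{n-k}\;\Bigl(\sum_{j=k+2}^{n+1} W_2^2(\rho^j,\rho^{j-1})\Bigr)^{1/2},$$
which via (i) and the elementary bound $n-k\leq (t-s)/\tau+1$ yields $W_2(\rho^\tau(t),\rho^\tau(s))\leq \sqrt{2C(t-s+\tau)}$. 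The velocity bound follows from the identity $\int_\Omega |v^{n+1}|^2\rho^{n+1}\,dx=\tau^{-2}W_2^2(\rho^{n+1},\rho^n)$ provided by Proposition \ref{prop:W}, which upon summing in $n$ produces $\|v^\tau\|_{L^2_{\rho^\tau}}^2 = \tau^{-1}\sum W_2^2(\rho^{n+1},\rho^n)\leq 2C$.

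For (iii), the key fact is that $\widetilde\rho^\tau$ is a constant-speed geodesic between $\rho^n$ and $\rho^{n+1}$ on $[n\tau,(n+1)\tau]$, so $W_2(\widetilde\rho^\tau(t),\widetilde\rho^\tau(s))=\tau^{-1}|t-s|\,W_2(\rho^{n+1},\rho^n)$ whenever $s,t$ lie in the same sub-interval. Concatenating across sub-intervals and applying Cauchy--Schwarz as above removes the additive $\tau$ and gives $W_2(\widetilde\rho^\tau(t),\widetilde\rho^\tau(s))\leq \sqrt{2C(t-s)}$. The bound $\|\widetilde v^\tau\|_{L^2_{\widetilde\rho^\tau}}\leq C$ is just a change of variables: the definition $\widetilde v^\tau = v^\tau\circ (\tfrac{t-n\tau}{\tau}(Id-T^{n+1})+T^{n+1})^{-1}$ together with the push-forward defining $\widetilde\rho^\tau$ ensures that $\int|\widetilde v^\tau|^2\,d\widetilde\rho^\tau = \int|v^\tau|^2\,d\rho^\tau$ at each time, so the estimate follows from (ii). Finally (iv) is immediate from the constraint $\rho^\tau,\widetilde\rho^\tau\leq 1$: since $|E^\tau|^2=(\rho^\tau)^2|v^\tau|^2\leq \rho^\tau|v^\tau|^2$, we obtain $\|E^\tau\|_{L^2(\Omega_T)}^2\leq \|v^\tau\|_{L^2_{\rho^\tau}}^2\leq 2C$, and identically for $\widetilde E^\tau$.

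No genuine obstacle arises in this lemma; it is the standard a priori package for JKO schemes (compare \cite{JKO}, \cite{MRS}), and in fact the incompressibility constraint $\rho\leq 1$ simplifies matters by both making $J$ trivially bounded below (Lemma \ref{lem:cont}) and allowing the momentum bound in (iv) to be recovered from the kinetic-energy bound without further work.
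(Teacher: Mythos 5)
Your proposal is correct and follows essentially the same approach as the paper: the energy--dissipation inequality from optimality of the JKO minimizer, telescoping, Cauchy--Schwarz across sub-intervals for the H\"older bound, the constant-speed geodesic property of $\widetilde\rho^\tau$, and the bound $(\rho^\tau)^2\leq\rho^\tau$ for the momentum. The only cosmetic difference is that you spell out (ii) explicitly where the paper declares it ``follows easily,'' and you phrase the $\widetilde v^\tau$ bound via the push-forward change of variables rather than the metric-derivative identity, which are equivalent.
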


\begin{proof}[Proof of Lemma \ref{lem:unifestimates1}]
The definition of  $\rho ^{n+1}$, \eqref{eq:JKO}, implies
\begin{equation}\label{eq:energycompetitor}
J (\rho^{n+1})+\frac{1}{2\tau}W_2^2(\rho^{n },\rho ^{n+1})\leq J (\rho ^{n }),
\end{equation}
In particular,
\begin{equation} \label{eq:disstau}
 J (\rho ^{N+1 })+\frac{1}{2\tau} \sum_{n=1}^N W_2^2(\rho ^{n },\rho ^{n+1 }) \leq J(\rho_{in}) \qquad \mbox{ for all $N\geq 0$.}
\end{equation}
Both (i) and the first inequality in (ii) follow easily.
Furthermore, inequality \eqref{eq:vbd} implies (using \eqref{eq:disstau} with $N\tau \geq T$)
\begin{equation*}
\begin{aligned}
\int_0^T{\int_{\Omega}\rho^{\tau}|v^{\tau}|^2dxdt}&\leq \sum_{n=0}^N \tau  \int_{\Omega} \rho ^{n+1 } |v^{n+1 } (x)|^2 dx  
\leq \sum_{n=0}^N\frac 1 \tau W_2^2(\rho ^{n },\rho ^{n+1})
\leq 2 J (\rho_{in}).
\end{aligned}
\end{equation*}
which gives the second part of (ii). 

To prove (iii), we note that $\widetilde{\rho}^{\tau}$, it is an absolutely continuous curve in the Wasserstein space and it is a constant speed geodesic interpolation with the velocity field $\widetilde{v}^{\tau}=\frac{1}{\tau}W_2(\rho^n,\rho^{n+1})$ when $t\in[n\tau,(n+1)\tau)$. Thus if
$n\tau\leq s\leq t<(n+1)\tau$, we have
\[
W_2(\widetilde{\rho}^{\tau}(t),\widetilde{\rho}^{\tau}(s))\leq W_2(\rho^n,\rho^{n+1})\frac{t-s}{\tau}\leq C\frac{t-s}{\sqrt{\tau}}\leq C\sqrt{t-s}
\]
since $0\leq t-s\leq \tau$. If $n\tau\leq s<(n+1)\tau\leq m\tau\leq t<(m+1)\tau$ for some $n+1\leq m$, using Cauchy-Schwartz, we have
\begin{equation*}
\begin{aligned}
W_2(\widetilde{\rho}^{\tau}(t),\widetilde{\rho}^{\tau}(s))&\leq \left(\sum_{k=n}^{m}{W_2^2(\rho^k,\rho^{k+1})}\right)^{1/2}\left(\left(\frac{(n+1)\tau-s}{\tau}\right)^2+m-(n+1)+\left(\frac{t-m\tau}{\tau}\right)^2\right)^{1/2}\\
&\leq \sqrt{C\tau}\sqrt{\frac{t-s}{\tau}}\leq C\sqrt{t-s}
\end{aligned}
\end{equation*}
since $\left(\frac{(n+1)\tau-s}{\tau}\right)^2\leq \frac{(n+1)\tau-s}{\tau}$ and $\left(\frac{t-m\tau}{\tau}\right)^2<\frac{t-m\tau}{\tau}$.

Moreover, we have (using \eqref{eq:energycompetitor}):
\[
\int_0^T{\int_{\Omega}{\widetilde{\rho}^{\tau}|\widetilde{v}^{\tau}|^2 dx}dt}=\int_0^T{\|\widetilde{v}^{\tau}\|^2_{L^2_{\widetilde{\rho}^{\tau}}}dt}=\int_0^T{|(\widetilde{\rho}^{\tau})'|^2_{W^2}(t)dt}=\sum_n{\frac{1}{\tau}W_2^2(\rho ^{n},\rho ^{n+1})}\leq 2 J(\rho_{in}).
\]
Here we use the notation $|\sigma'|(t)$ for the metric derivative of a curve $\sigma$ and $||\sigma'||_{W^2}(t)$ means that this metric derivative is computed according to the distance $W_2$. 
\medskip

Finally, (iv) follows from the bound $\rho^{\tau}\leq1$:
\begin{equation*}\int_0^T{\int_{\Omega} | E^{\tau}|^2 dxdt} =\int_0^T \int_{\Omega} {\rho^{\tau}}^2  | v^{\tau}|^2 dxd t \leq \int_0^T \int_{\Omega} \rho^{\tau} | v^{\tau}|^2 dxd t 
\end{equation*}
and similarly for $\widetilde{E}^{\tau}$.
\end{proof}

Lemma \ref{lem:unifestimates1}  implies:
\begin{proposition}\label{prop:rholim}
For any sequence $\tau_n\to 0$ there exists a subsequence (still denoted $ \tau_n$) along which $\rho^{\tau_n}$ and $\widetilde{\rho}^{\tau_n}$ converge uniformly with respect to $W_2$  to the same limit
and  $E^{\tau_n}$ and $\widetilde{E}^{\tau_n}$ converge weakly in $L^2(\Omega_T)$ to the same limit.
\end{proposition}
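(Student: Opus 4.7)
The plan is to apply an Arzelà–Ascoli type argument to $\widetilde{\rho}^\tau$, transfer the convergence to $\rho^\tau$, and then handle the momenta separately via weak compactness in $L^2$.

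First, for the density curves. By Lemma~\ref{lem:unifestimates1}(iii), the family $\{\widetilde{\rho}^\tau\}$ is uniformly $1/2$-Hölder continuous from $[0,T]$ into $(\mathcal{P}(\Omega),W_2)$. Since $\Omega$ is bounded, $(\mathcal{P}(\Omega),W_2)$ is a compact metric space, so by the Arzelà–Ascoli theorem I can extract a subsequence $\tau_n \to 0$ such that $\widetilde{\rho}^{\tau_n}(t) \to \rho(t)$ uniformly in $t \in [0,T]$ with respect to $W_2$. Next I observe that for $t\in[n\tau,(n+1)\tau)$, the endpoint of the geodesic at $t=(n+1)\tau^-$ is $\rho^{n+1}=\rho^\tau(t)$, and since $\widetilde{\rho}^\tau$ is a constant-speed geodesic from $\rho^n$ to $\rho^{n+1}$ on this interval,
\[
W_2(\rho^\tau(t),\widetilde{\rho}^\tau(t)) \leq W_2(\rho^{n+1},\rho^n)\cdot \frac{(n+1)\tau - t}{\tau} \leq W_2(\rho^{n+1},\rho^n) \leq C\sqrt{\tau}
\]
by Lemma~\ref{lem:unifestimates1}(i). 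Therefore $\sup_{t\in[0,T]} W_2(\rho^{\tau_n}(t),\widetilde{\rho}^{\tau_n}(t)) \to 0$, and $\rho^{\tau_n}$ converges uniformly in $W_2$ to the same limit $\rho$.

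For the momenta, Lemma~\ref{lem:unifestimates1}(iv) gives uniform $L^2(\Omega_T)$ bounds on $E^\tau$ and $\widetilde{E}^\tau$, so by passing to a further subsequence (not relabelled), both converge weakly in $L^2$ to limits $E$ and $\widetilde{E}$ respectively. To identify them, I fix a test field $\xi\in C^1_c(\Omega_T;\R^d)$ and use the definition of $\widetilde{E}^\tau$ as a push-forward: for $t\in[n\tau,(n+1)\tau)$, writing $S_t^\tau(x) := T^{n+1}(x) + \frac{t-n\tau}{\tau}(x-T^{n+1}(x))$,
\[
\int_\Omega \widetilde{E}^\tau(y,t)\cdot \xi(y,t)\,dy = \int_\Omega \rho^{n+1}(x)\, v^{n+1}(x)\cdot \xi\bigl(S_t^\tau(x),t\bigr)\,dx,
\]
whereas $\int \rho^{n+1}v^{n+1}\cdot\xi(x,t)\,dx = \int E^\tau(\cdot,t)\cdot\xi(\cdot,t)\,dx$. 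Since $|S_t^\tau(x)-x| \leq |T^{n+1}(x)-x| = \tau|v^{n+1}(x)|$, I get
\[
\left| \int_\Omega (\widetilde{E}^\tau - E^\tau)\cdot \xi\,dy\right| \leq \tau \|\nabla\xi\|_\infty \int_\Omega \rho^{n+1}|v^{n+1}|^2\,dx.
\]
Integrating in $t$ and using Lemma~\ref{lem:unifestimates1}(ii),
\[
\left|\int_0^T\!\!\int_\Omega (\widetilde{E}^\tau - E^\tau)\cdot \xi\,dy\,dt\right| \leq \tau\|\nabla\xi\|_\infty \sum_{n=0}^N \tau\!\int_\Omega \rho^{n+1}|v^{n+1}|^2\,dx \leq C\tau\|\nabla\xi\|_\infty \to 0.
\]
By density of smooth compactly supported fields in $L^2$, this identifies $E = \widetilde{E}$.

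I expect the computation identifying the limits of $E^\tau$ and $\widetilde{E}^\tau$ to be the only nontrivial step; the Hölder-based compactness and the comparison between the piecewise constant and geodesic interpolation are routine consequences of Lemma~\ref{lem:unifestimates1}. The change-of-variables identity for $\widetilde{E}^\tau$ is the key observation and relies crucially on the fact that $\widetilde{v}^\tau$ is transported by the interpolation map $S_t^\tau$, together with the uniform estimate $|S_t^\tau(x)-x|\leq \tau|v^{n+1}(x)|$ controlling the displacement on each subinterval.
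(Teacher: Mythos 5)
Your proof is correct and follows essentially the same approach as the paper's. Both parts proceed identically in substance: the Hölder estimate from Lemma~\ref{lem:unifestimates1}(iii) plus Arzelà--Ascoli for the continuous interpolation, the comparison $W_2(\rho^\tau(t),\widetilde\rho^\tau(t))\leq C\sqrt\tau$ (you derive it directly from the geodesic property while the paper invokes the Hölder bound at the matching endpoint, but these are equivalent), and for the momenta, the same push-forward change of variables $\int \widetilde E^\tau\cdot\xi = \int \rho^{n+1}v^{n+1}\cdot(\xi\circ S_t^\tau)$ with the displacement bound $|S_t^\tau(x)-x|\leq\tau|v^{n+1}(x)|$, leading to the $O(\tau)$ estimate on $\int(\widetilde E^\tau-E^\tau)\cdot\xi$.
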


\begin{proof}[Proof of Proposition \ref{prop:rholim}]
The equicontinuity estimate $W_2(\widetilde{\rho}^{\tau}(t),\widetilde{\rho}^{\tau}(s))\leq C\sqrt{t-s}$ (Lemma \ref{lem:unifestimates1} (iii)) implies that 
$\widetilde{\rho}^{\tau_n}$ converges uniformly in $[0,T]$ with respect to the $W_2$ distance (up to a subsequence). The curves $\rho^{\tau}$ and $\widetilde{\rho}^{\tau}$ coincide at every time $t$ of the form $n\tau$. The former is constant on every interval $[n\tau,(n+1)\tau)$, whereas the latter is uniformly H\"older continuous of exponent 1/2, which implies 
\begin{equation}\label{eq:W2tr}
W_2(\rho^{\tau}(t),\widetilde{\rho}^{\tau}(t))\leq C\sqrt{\tau}
\end{equation}
This proves that $\rho^{ \tau_n}$ converges uniformly
to the same limit as $\widetilde{\rho}^{ \tau_n}$.

\medskip

Since $\widetilde{E}^{ \tau_n}$ and $E^{ \tau_n }$ are uniformly bounded in $L^2(\Omega_T)$ (Lemma \ref{lem:unifestimates1} (iv)), they have weak-* limits $\widetilde{E}$ and $E$ respectively. It only remains to prove that $\widetilde{E}=E$.
We note that
\begin{align*}
\widetilde{v}^{\tau}& =v^{\tau}\circ \left(\frac{t-n\tau}{\tau}(Id-T^{n+1})+T^{n+1}\right)^{-1}\\
& =v^{\tau}\circ\left(\frac{(n+1)\tau-t}{\tau}(Id-T^{n+1})+Id\right)^{-1}\\
& =v^{\tau}\circ\left(((n+1)\tau-t)v^{\tau}+Id\right)^{-1}.
\end{align*}
For a test function $f\in \mathrm{Lip}(\Omega \times [0,T] ,\R^d)$, we then have:
\[
\int_0^T{\int_{\Omega}{f\cdot \widetilde{E}^{\tau}}}=\int_0^T{dt\int_{\Omega}{f\cdot \widetilde{v}^{\tau}}\widetilde{\rho}^{\tau}dx}=\int_0^T{dt\int_{\Omega}{f\circ(((n+1)\tau-t)v^{\tau}+Id) \cdot v^{\tau}}\rho^{\tau}dx},
\]
which implies
\begin{equation*}
\begin{aligned}
\left|\int_0^T{\int_{\Omega}{f\cdot \widetilde{E}^{\tau}}}-\int_0^T{\int_{\Omega}{f\cdot E^{\tau}}}\right|&\leq \int_0^T{\int_{\Omega}{|f\circ(((n+1)\tau-t)v^{\tau}+Id)-f|\,|v^{\tau}|\rho^{\tau}dx}dt}\\
&\leq \mathrm{Lip}(f)\tau \int_0^T{\int_{\Omega}{|v^{\tau}|^2\rho^{\tau}dx}dt}\leq C  \mathrm{Lip}(f)\tau
\end{aligned}
\end{equation*}
and so $\widetilde{E}^{ \tau_n}-E^{ \tau_n } \to 0$ in $\mathcal D'(\Omega_T)$.
The result follows.
\end{proof}

We end this section with some a priori estimates for the pressure $p^{\tau}$ and the potential $\phi^{\tau}$ which will be used for the proof of Theorem \ref{thm:conv}.
First we have:
\begin{lemma}\label{lem:unifestimates2}
For all $\tau>0$ we have
\begin{equation}\label{eq:pressurebd}
\| \na p^{\tau}(t) \|_{ L^2( \Omega) }\leq \|\na \phi^\tau(t)\| _{L^2(\Omega)} \leq 1\qquad  \mbox{ for all } t>0 .
\end{equation}
Furthermore, there exists $C$ depending only on $J(\rho_{in})$ such that
\begin{equation}\label{eq:densityH1bd}
\| \mu \na \rho^\tau \| _{L^2(\Omega_T)} \leq C.
\end{equation}
\end{lemma}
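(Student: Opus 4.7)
The plan is to derive both bounds from estimates already at hand: the variational inequality for the pressure proved in Remark \ref{rmk:pressureineq}, the identity \eqref{eq:EL5} from Proposition \ref{prop:pressure}, the a priori $H^1$ bound \eqref{eq:phiH1} for the potential, and the momentum bound in Lemma \ref{lem:unifestimates1}(iv).

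\textbf{First inequality.} Fix $t\in[n\tau,(n+1)\tau)$, so that $(\rho^\tau(t),p^\tau(t),\phi^\tau(t))=(\rho^{n+1},p^{n+1},\phi^{n+1})$. Since $p^{n+1}\in H^1_{\rho^{n+1}}$, we may apply \eqref{eq:pressureobst} with $p=p^{n+1}$ and $\phi^*=\phi^{n+1}$ to get
\[
\int_\Omega |\nabla p^{n+1}|^2\, dx \;\leq\; \int_\Omega \nabla p^{n+1}\cdot \nabla \phi^{n+1}\, dx \;\leq\; \|\nabla p^{n+1}\|_{L^2(\Omega)}\,\|\nabla \phi^{n+1}\|_{L^2(\Omega)}
\]
by Cauchy-Schwarz. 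Dividing by $\|\nabla p^{n+1}\|_{L^2}$ (or noting the trivial case) gives $\|\nabla p^\tau(t)\|_{L^2}\leq \|\nabla \phi^\tau(t)\|_{L^2}$, while $\|\nabla \phi^\tau(t)\|_{L^2}\leq 1$ is exactly \eqref{eq:phiH1} applied with $\rho=\rho^\tau(t)\in K$.

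\textbf{Second inequality.} From \eqref{eq:EL5} we rewrite
\[
\mu \nabla \rho^\tau \;=\; -\rho^\tau v^\tau + \rho^\tau\nabla\phi^\tau - \nabla p^\tau \;=\; -E^\tau+\rho^\tau\nabla\phi^\tau-\nabla p^\tau \qquad \text{a.e. in }\Omega_T.
\]
Hence
\[
\|\mu\nabla\rho^\tau\|_{L^2(\Omega_T)}^2 \;\leq\; 3\Bigl(\|E^\tau\|_{L^2(\Omega_T)}^2 + \|\rho^\tau\nabla\phi^\tau\|_{L^2(\Omega_T)}^2 + \|\nabla p^\tau\|_{L^2(\Omega_T)}^2\Bigr).
\]
The first term is controlled by Lemma \ref{lem:unifestimates1}(iv). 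Since $0\leq\rho^\tau\leq 1$, the bound \eqref{eq:phiH1} gives $\|\rho^\tau\nabla\phi^\tau\|_{L^2(\Omega_T)}^2\leq \int_0^T\|\nabla\phi^\tau(t)\|_{L^2}^2\, dt \leq T$, and the first inequality of the lemma together with \eqref{eq:phiH1} gives $\|\nabla p^\tau\|_{L^2(\Omega_T)}^2\leq T$. Combining these yields \eqref{eq:densityH1bd} with a constant depending only on $T$ and $J(\rho_{in})$.

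Both steps are essentially bookkeeping; there is no real obstacle here, since the delicate work was already carried out in Proposition \ref{prop:pressure} and Lemma \ref{lem:pressureineq}. The only mild subtlety is being careful that the multiplication by $\mu$ is built into the identity \eqref{eq:EL5}, so that the estimate is uniform in $\mu\geq 0$ (in particular, no $\mu^{-1}$ appears on the right-hand side).
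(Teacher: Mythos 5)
Your proof is correct and takes essentially the same route as the paper: the first bound from \eqref{eq:pressureobst} combined with \eqref{eq:phiH1}, and the second from the decomposition \eqref{eq:EL5} with the momentum bound of Lemma \ref{lem:unifestimates1}(iv). The only cosmetic difference is that you invoke $\|E^\tau\|_{L^2}$ directly while the paper uses $\int\rho^\tau|v^\tau|^2$ (equivalent via $\rho^\tau\leq 1$); and you are right to note that the constant also depends on $T$, which the paper leaves implicit.
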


\begin{proof}[Proof of Lemma \ref{lem:unifestimates2}]
The first inequality in \eqref{eq:pressurebd} follows from \eqref{eq:pressureobst} while the second one 
is a consequence of \eqref{eq:phiH1}.

To prove \eqref{eq:densityH1bd}, we use 
\eqref{eq:EL5} to write
$\mu \na \rho^\tau  
=  -\rho^\tau v+ \rho^\tau \na \phi^\tau -  \na p^\tau$ and so (since $\rho^\tau\leq 1$):
$$ 
\| \mu\na  \rho^\tau \|^2 _{L^2(\Omega_T)} \leq \int_0^T\int_\Omega  \rho^\tau |v^\tau|^2\, dx\,dt +  \| \na \phi^\tau \| ^2_{L^2(\Omega_T)}
+ \|\na p^\tau \|^2 _{L^2(\Omega_T)}
$$
and the result follows.
\end{proof}

In order to get additional estimates on $\phi^{\tau}$, we will use the following classical Lemma (see for instance \cite{MRS}):
\begin{lemma}\label{lem:tech}
Given $\mu, \nu \in \mathcal P(\Omega)$ and for all $f\in H^1(\Omega)$, we have
$$\left|  \int_\Omega f d(\mu-\nu)\right| \leq \| \na f\|_{L^2(\Omega)} W_2(\mu,\nu).$$
\end{lemma}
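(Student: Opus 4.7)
The plan is to combine the Brenier optimal transport map with a Cauchy--Schwarz estimate and a density bound on the displacement interpolation. By density of Lipschitz functions in $H^1(\Omega)$ I would first reduce to the case of smooth $f$. In the setting of interest here (both measures in $K$, so $\mu,\nu\leq 1$), let $T$ be the Brenier optimal transport map with $T\#\mu=\nu$, and set $T_t(x):=(1-t)x+tT(x)$ and $\rho_t:=T_t\#\mu$, the displacement interpolant.

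Next, I would write
$$\int_\Omega f\, d(\nu-\mu) = \int_\Omega [f(T(x))-f(x)]\, d\mu(x) = \int_0^1\int_\Omega \na f(T_t(x))\cdot(T(x)-x)\, d\mu(x)\, dt,$$
and apply Cauchy--Schwarz in the product measure $d\mu\otimes dt$ to bound the square of the left-hand side by
$$\left(\int_0^1\int_\Omega |\na f(T_t(x))|^2\, d\mu(x)\, dt\right)\left(\int_0^1\int_\Omega |T(x)-x|^2\, d\mu(x)\, dt\right).$$
The second parenthesis equals $W_2^2(\mu,\nu)$ by the very definition of the optimal transport map, and the change of variable $y=T_t(x)$ rewrites the first parenthesis as $\int_0^1\int_\Omega |\na f|^2\, d\rho_t\, dt$.

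The main step that remains is to establish the density bound $\rho_t\leq 1$ a.e.\ on $\Omega$, which I would do exactly as in the proof of Lemma~\ref{lem:pressureineq}: since $\na T$ is symmetric positive semidefinite a.e.\ by Brenier's theorem and $M\mapsto |\det M|^{-1}$ is convex on the positive cone,
$$\rho_t(T_t(x)) = \frac{\mu(x)}{|\det\na T_t(x)|} \leq (1-t)\mu(x)+t\,\nu(T(x)) \leq 1.$$
Inserting this into the Cauchy--Schwarz estimate yields $\int_0^1\int_\Omega |\na f|^2\, d\rho_t\, dt \leq \|\na f\|_{L^2(\Omega)}^2$ and hence the desired inequality.

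The only subtle point is the density bound on the displacement interpolant, and this is a direct reuse of the convexity argument already used in Lemma~\ref{lem:pressureineq}. For general probability measures without any density bound the inequality as stated would fail in dimension $d\geq 2$ (e.g.\ for Dirac masses, where $H^1$ functions need not be continuous), and one would be forced to replace $\|\na f\|_{L^2}$ by a Lipschitz norm of $f$ via the Kantorovich--Rubinstein duality $W_1\leq W_2$.
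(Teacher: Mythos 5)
Your proof is correct, and it is essentially the classical argument from the cited reference \cite{MRS}; the paper gives no proof of its own, only a pointer. The chain Brenier map $\to$ fundamental theorem of calculus along the displacement interpolant $\rho_t=T_t\#\mu$ $\to$ Cauchy--Schwarz in $d\mu\otimes dt$ $\to$ $L^\infty$ bound on $\rho_t$ via convexity of $M\mapsto(\det M)^{-1}$ is precisely the standard route.

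Your remark about the hypothesis deserves to be flagged. The paper states the lemma for arbitrary $\mu,\nu\in\mathcal P(\Omega)$, but the inequality cannot hold in that generality: if $\nu$ is a Dirac mass and $d\geq 2$, then for $f\in H^1(\Omega)$ without a continuous representative (e.g.\ $f(y)=\log\log(e/|y|)$ near $0$ in $d=2$) the left-hand side is either undefined or infinite. The argument genuinely uses $\rho_t\leq 1$, hence both $\mu\leq1$ and $\nu\leq1$: with only $\mu\leq1$ one merely gets $\rho_t\leq(1-t)^{-d}$ from $\det\nabla T_t\geq(1-t)^d$, and $\int_0^1\int_\Omega|\nabla f|^2\,d\rho_t\,dt$ need not be finite for $d\geq2$. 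All of the paper's applications do use pairs of measures in $K$ (e.g.\ $\widetilde{\rho}^{\tau}(s),\widetilde{\rho}^{\tau}(t)$ in Lemma~\ref{lem:phibd}, and $\rho(t_0),\rho(t)$ in the proof of Proposition~\ref{prop:obs}), so this is an imprecision in the statement rather than a gap, but the hypothesis should read $\mu,\nu\in K$ as you wrote it.
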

We then prove:

\begin{lemma}\label{lem:phibd}
There exist $s>0$ and a constant $C$ depending only on $J(\rho_{in}) $ such that the continuous interpolation $\na \widetilde \phi^{\tau}$ (which solves \eqref{eq:phi0} with $\rho=\widetilde \rho^{\tau}$) satisfies
$$ \|  \na \widetilde \phi^{\tau}\|_{C^s(\Omega_T)} \leq C, $$
$$ \|  \widetilde \phi^{\tau}\|_{C^{1/2}((0,T);H^{-1}(\Omega)})\leq C,$$ 
and
$$ \|  \na \widetilde \phi^{\tau}- \na  \phi^{\tau}\|_{L^2(\Omega_T)} \leq C\sqrt\tau,$$
\end{lemma}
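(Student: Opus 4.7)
The plan is to prove the three estimates essentially independently, all of them by combining elliptic regularity for the equation \eqref{eq:phi0} with the $W_2$-equicontinuity of $\widetilde\rho^\tau$ from Lemma~\ref{lem:unifestimates1}(iii).

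The crucial preliminary observation is that $\widetilde\rho^\tau(t)\le 1$ uniformly in $t$ and $\tau$. This is not immediate from the definition \eqref{eq:interp}, but it follows from exactly the same geodesic-convexity argument used in the proof of Lemma~\ref{lem:pressureineq}: since $\rho^{n+1}\in K$ and $T^{n+1}\#\rho^{n+1}=\rho^n\in K$, the convexity of $M\mapsto|\det M|^{-1}$ on positive-definite matrices together with $\bar\rho(T(x))=\rho^*(x)/|\det\nabla T(x)|\le1$ yields $\widetilde\rho^\tau(t)\le 1$ pointwise. Once this is in hand, for every $t$ we can apply Calderón-Zygmund to $\widetilde\phi^\tau(\cdot,t)-\Delta\widetilde\phi^\tau(\cdot,t)=\widetilde\rho^\tau(\cdot,t)$ (with the Robin boundary condition in \eqref{eq:phi0} on the smooth domain $\Omega$) and conclude $\|\widetilde\phi^\tau(\cdot,t)\|_{W^{2,p}(\Omega)}\le C_p$ for every $p<\infty$. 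Choosing $p>d$ and invoking Morrey's embedding gives $\nabla\widetilde\phi^\tau(\cdot,t)\in C^{s_0}(\Omega)$ uniformly in $t$ and $\tau$ for some $s_0>0$.

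For the second estimate and the time regularity, note that $\widetilde\phi^\tau(t)-\widetilde\phi^\tau(s)$ solves \eqref{eq:phi0} with right-hand side $\widetilde\rho^\tau(t)-\widetilde\rho^\tau(s)$. Testing against this solution (the Robin boundary term is nonnegative) yields
\[
\|\widetilde\phi^\tau(t)-\widetilde\phi^\tau(s)\|_{H^1(\Omega)}\le\|\widetilde\rho^\tau(t)-\widetilde\rho^\tau(s)\|_{H^{-1}(\Omega)}.
\]
Applying Lemma~\ref{lem:tech} and Lemma~\ref{lem:unifestimates1}(iii) bounds the right-hand side by $CW_2(\widetilde\rho^\tau(t),\widetilde\rho^\tau(s))\le C|t-s|^{1/2}$, giving the second claim (the bound in $C^{1/2}((0,T);H^{-1})$ is even easier, since $H^1\hookrightarrow H^{-1}$). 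To upgrade this to the joint space-time Hölder estimate on $\nabla\widetilde\phi^\tau$, I would apply the elementary interpolation inequality
\[
\|g\|_{L^\infty(\Omega)}\le C\|g\|_{L^2(\Omega)}^{\theta}\|g\|_{C^{s_0}(\Omega)}^{1-\theta}, \qquad \theta=\tfrac{2s_0}{2s_0+d},
\]
to $g=\nabla\widetilde\phi^\tau(\cdot,t)-\nabla\widetilde\phi^\tau(\cdot,s)$, which has $L^2$-norm $\lesssim|t-s|^{1/2}$ and uniformly bounded $C^{s_0}$-norm. This gives uniform time Hölder continuity for $\nabla\widetilde\phi^\tau(x,\cdot)$ with exponent $\theta/2$, which combined with spatial $C^{s_0}$-regularity yields joint Hölder regularity with some exponent $s>0$.

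Finally, for the third estimate, the difference $\widetilde\phi^\tau(t)-\phi^\tau(t)$ solves \eqref{eq:phi0} with right-hand side $\widetilde\rho^\tau(t)-\rho^\tau(t)$, so the same $H^1$--$H^{-1}$ duality argument as above gives
\[
\|\nabla\widetilde\phi^\tau(t)-\nabla\phi^\tau(t)\|_{L^2(\Omega)}\le\|\widetilde\rho^\tau(t)-\rho^\tau(t)\|_{H^{-1}(\Omega)}\le CW_2(\widetilde\rho^\tau(t),\rho^\tau(t))\le C\sqrt\tau,
\]
where the last inequality is exactly the estimate \eqref{eq:W2tr} obtained in the proof of Proposition~\ref{prop:rholim}. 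Integrating the square over $t\in(0,T)$ then yields the desired $\mathcal O(\sqrt\tau)$ bound. The only mildly delicate step in the whole argument is the interpolation producing joint space-time Hölder regularity in the first estimate; everything else is a direct consequence of $L^\infty$-boundedness of $\widetilde\rho^\tau$ and the $W_2$-equicontinuity already established.
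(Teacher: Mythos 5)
Your proof is correct and follows essentially the same route as the paper: the $H^1$--$H^{-1}$ duality estimate against $W_2$ via Lemma~\ref{lem:tech} for the time regularity of $\nabla\widetilde\phi^\tau$, Calder\'on--Zygmund plus Morrey for the uniform spatial H\"older bound, interpolation between the two for joint space-time H\"older regularity, and the same duality computation with \eqref{eq:W2tr} for the third estimate. One point in your favor: you explicitly flag and justify the preliminary fact $\widetilde\rho^\tau\le 1$ (via the displacement-convexity argument from Lemma~\ref{lem:pressureineq}), which the paper uses tacitly when invoking $\Delta\widetilde\phi^\tau\in L^\infty(0,T;L^p)$; your direct use of $H^1\hookrightarrow H^{-1}$ for the second bound is also a small simplification of the paper's decomposition $\widetilde\phi^\tau=\widetilde\rho^\tau+\mathrm{div}(\nabla\widetilde\phi^\tau)$.
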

\begin{proof}
Let $0<s<t<T$ and $w = \widetilde \phi^{\tau}(x,s)-\widetilde \phi^{\tau}(x,t)$. Using \eqref{eq:phi0}, we see that $w$ solves
$$ w - \Delta w =  \widetilde \rho^{\tau}(x,s)-\widetilde \rho^{\tau}(x,t)$$
together with the Robin boundary conditions.
Multiplying by  $w$, integrating by parts and using Lemma~\ref{lem:tech} we get:
\begin{align*}
\int_\Omega |w|^2 \, dx +   \int_\Omega |\na w|^2\, dx 
& \leq \int  (\widetilde \rho^{\tau}(x,s)-\widetilde \rho^{\tau}(x,t)) w\, dx \\
& \leq \left(\int_\Omega |\na w|^2\, dx\right)^{1/2} W_2(\widetilde \rho^{\tau}(s)-\widetilde \rho^{\tau}(t))
\end{align*}
Using Lemma \ref{lem:unifestimates1} (iii), we deduce
$$
  \left( \int_\Omega |\na w|^2\, dx\right)^{1/2} \leq C\sqrt{t-s}$$
which proves that $ \na \widetilde \phi^{\tau}$ is bounded in $C^{1/2}(0,T;L^2(\Omega))$.
Furthermore, \eqref{eq:phi0} implies that $   \Delta \widetilde \phi^{\tau}$ is bounded in $L^\infty(0,T;L^p(\Omega))$ for all $p\in[0,\infty]$, so Calderon-Zygmund estimates imply that $  \na \widetilde \phi^{\tau}$
is bounded in $L^\infty(0,T;W^{1,p}(\Omega))$ for all $p<\infty$ and so in $L^\infty(0,T;C^s(\Omega))$ for all $s<1$.
Together, these estimates imply the first bound.
\medskip

Note that we can now write 
$$ \widetilde \phi^{\tau}= \widetilde \rho^{\tau}+ \div (  \na \widetilde \phi^{\tau}).$$
We proved above that $  \na \widetilde \phi^{\tau}$ is bounded in $C^{1/2}(0,T;L^2(\Omega))$, and 
 Lemma \ref{lem:unifestimates1} (iii) together with Lemma~\ref{lem:tech} implies that $ \widetilde \rho^{\tau}$ is bounded in $C^{1/2}(0,T;H^{-1}(\Omega))$. The second bound in Lemma \ref{lem:phibd} follows.
\medskip

Finally the function $z= \widetilde \phi^{\tau}(x,t)- \phi^{\tau}(x,t)$ solves
$$  z -   \Delta z =  \widetilde \rho^{\tau}(x,t)- \rho^{\tau}(x,t)$$
and using Lemma~\ref{lem:tech} and \eqref{eq:W2tr} we get
\begin{align*}
\int_\Omega |z|^2 \, dx +   \int_\Omega |\na z|^2\, dx 
& \leq \left(\int_\Omega |\na z|^2\, dx\right)^{1/2} W_2(\widetilde \rho^{\tau}(t)-\rho^{\tau}(t))\\
& \leq \left(\int_\Omega |\na z|^2\, dx\right)^{1/2} \sqrt{\tau}
\end{align*}
and the last bound of Lemma \ref{lem:phibd}  follows.

\end{proof}

\section{Convergence of the discrete time scheme $\tau \to 0$}

\subsection{Proof of Theorem \ref{thm:conv}}


For any sequence $\tau_n\to0$, we can use  
Proposition \ref{prop:rholim} to find a subsequence (still denoted $\tau_n$) such that
$\rho^{\tau_n}, \widetilde \rho^{\tau_n}$ converge uniformly with respect to $W_2$ to $\rho(x,t)$ and $E^{\tau_n}$, $\widetilde E^{\tau_n}$ converge weakly in $L^2$ to $E(x,t)$.
Furthermore, using Lemma \ref{lem:unifestimates2} we can assume, up to another subsequence, that $p^{\tau_n}\to p$, $\na p^{\tau_n} \to \na p$ and $\mu \na \rho^{\tau_n} \to \mu \na \rho$   weakly in $L^2(\Omega_T)$
and finally, Lemma \ref{lem:phibd} implies that
$\na \widetilde \phi^{\tau_n}$ converges uniformly  and $\na\phi^{\tau_n}$ converges strongly in $L^2$ to 
 $\na \phi$ where $\phi = \int G(x,y)\rho(y)\, dy$.

We then have:
\begin{proposition}\label{prop:conv}
The limits $\rho \in C^{1/2}(0,T;\mathcal P(\Omega))$ and $E\in L^2(\Omega_T)$ are such that
\begin{equation}\label{eq:weakE}
\int_0^\infty \int_\Omega \rho(x,t) \pa_t\zeta (x,t) +  E(x,t)  \cdot \na \zeta (x,t) \, dx \, dt
  + \int_\Omega \rho_{in}(x) \zeta(x,0)\, dx = 0
\end{equation}
for all $\zeta\in C^\infty(\overline\Omega \times [0,T] )$ with
\begin{equation}\label{eq:weakEE}
E = \rho \na \phi -\mu\na \rho - \na p, \qquad \phi=\int_\Omega G(x,y) \rho(y)\, dy.
\end{equation}
Furthermore, $E$ is absolutely continuous with respect to $\rho$, and can be written as $E=\rho v$ for some $v\in L^2 (d\rho)$.
\end{proposition}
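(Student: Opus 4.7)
The strategy is to pass to the limit $\tau_n\to 0$ in the two discrete identities \eqref{eq:weakepstau} and \eqref{eq:pressureepstau} using the compactness and regularity results assembled just above the statement (Proposition \ref{prop:rholim} and Lemmas \ref{lem:unifestimates2} and \ref{lem:phibd}), and to then use a Benamou--Brenier type lower semicontinuity argument to conclude that $E$ is absolutely continuous with respect to $\rho$. The main technical issue is the passage to the limit in the nonlinear term $\rho^\tau\na\phi^\tau$, which must be combined with the last claim $E=\rho v$ with $v\in L^2(d\rho)$.

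\textbf{Derivation of \eqref{eq:weakE}.} Start from identity \eqref{eq:weakepstau1} (equivalently \eqref{eq:weakepstau}). Lemma \ref{lem:unifestimates1}(i) gives $\sum_{n=0}^N W_2^2(\rho^n,\rho^{n-1})\leq 2C\tau$, so the remainder term is $O(\tau)$ for fixed smooth $\zeta$. For the initial term, $\frac1\tau\int_0^\tau\zeta(\cdot,t)\,dt\to\zeta(\cdot,0)$ uniformly on $\overline\Omega$. For the discrete time-derivative term, the uniform $W_2$-convergence $\rho^{\tau_n}\to\rho$ gives in particular narrow convergence at each time, and combined with $\tau^{-1}(\zeta(x,t+\tau)-\zeta(x,t))\to\pa_t\zeta(x,t)$ uniformly and $\rho^\tau\leq 1$, dominated convergence yields $\int_0^\infty\int_\Omega\rho^\tau\frac{\zeta(\cdot,t+\tau)-\zeta(\cdot,t)}{\tau}\,dx\,dt\to\int_0^\infty\int_\Omega \rho\,\pa_t\zeta\,dx\,dt$. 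The term $\int E^{\tau_n}\cdot\na\zeta$ passes by weak $L^2$ convergence.

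\textbf{Derivation of \eqref{eq:weakEE}.} In \eqref{eq:pressureepstau}, the terms $\int\mu\rho^\tau\,\div\xi$ and $\int p^\tau\,\div\xi$ pass to the limit by the weak $L^2$ convergences of $\mu\na\rho^\tau\to\mu\na\rho$ (after integration by parts) and $p^\tau\to p$. For the drift term, combine Lemma \ref{lem:phibd}: $\na\widetilde\phi^{\tau_n}\to\na\phi$ uniformly (up to a subsequence, by Arzel\`a--Ascoli applied to the $C^s$ bound, the limit being identified by the convergence of $\widetilde\rho^{\tau_n}\to\rho$), and $\|\na\widetilde\phi^{\tau_n}-\na\phi^{\tau_n}\|_{L^2(\Omega_T)}=O(\sqrt{\tau_n})$. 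Hence $\na\phi^{\tau_n}\to\na\phi$ strongly in $L^2(\Omega_T)$. Since $0\leq\rho^{\tau_n}\leq 1$ converges weakly-$*$ in $L^\infty(\Omega_T)$ to $\rho$ (this follows from narrow convergence at every $t$ and the uniform $L^\infty$ bound), we get $\rho^{\tau_n}\na\phi^{\tau_n}\rightharpoonup \rho\na\phi$ weakly in $L^2(\Omega_T)$. Combined with $E^{\tau_n}\rightharpoonup E$ in $L^2$, \eqref{eq:pressureepstau} passes to the limit and gives \eqref{eq:weakEE}.

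\textbf{Absolute continuity $E\ll\rho$.} Here I would work with the continuous interpolants $(\widetilde\rho^\tau,\widetilde E^\tau)$, which by Proposition \ref{prop:rholim} have the same weak limits $(\rho,E)$. Lemma \ref{lem:unifestimates1}(iii) gives
\[
\int_0^T\!\!\int_\Omega\frac{|\widetilde E^\tau|^2}{\widetilde\rho^\tau}\,dx\,dt=\int_0^T\|\widetilde v^\tau\|_{L^2_{\widetilde\rho^\tau}}^2\,dt\leq C.
\]
The functional $(\sigma,m)\mapsto \int |m|^2/\sigma\,dx\,dt$, extended to $+\infty$ when $m$ is not absolutely continuous with respect to $\sigma$, is convex and lower semicontinuous for the weak convergence of measures (this is the standard Benamou--Brenier action, see e.g. \cite{OTAM}). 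Applying it with $\sigma=\widetilde\rho^\tau\mathcal L^{d+1}$ and $m=\widetilde E^\tau\mathcal L^{d+1}$, the uniform bound forces $E\ll\rho$ in $\Omega_T$; writing $E=\rho v$, the lower semicontinuity also yields $\int_{\Omega_T}|v|^2\rho\,dx\,dt\leq C$, so that $v\in L^2(d\rho)$.

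\textbf{Continuity in time.} Finally, $\rho\in C^{1/2}(0,T;\mathcal P(\Omega))$ is inherited from the uniform equicontinuity bound $W_2(\widetilde\rho^\tau(t),\widetilde\rho^\tau(s))\leq C\sqrt{t-s}$ in Lemma \ref{lem:unifestimates1}(iii) and the uniform $W_2$-convergence. The main delicate point in the whole argument is the Benamou--Brenier lower semicontinuity step, since the density bound $\rho^\tau\leq 1$ is not by itself sufficient to express the limiting momentum as a velocity field against $\rho$; everything else reduces to routine weak/strong convergence combined with the quantitative error control in \eqref{eq:weakepstau1}.
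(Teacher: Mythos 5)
Your proposal is correct and follows essentially the same route as the paper: pass to the limit in \eqref{eq:weakepstau} and \eqref{eq:pressureepstau} using the quantitative $O(\tau)$ control from Lemma \ref{lem:unifestimates1}(i), the strong $L^2$ convergence of $\na\phi^{\tau_n}$ from Lemma \ref{lem:phibd} to handle the nonlinear drift term against the weak convergence of $\rho^{\tau_n}$, and then the lower semicontinuity of the Benamou--Brenier action functional to obtain $E\ll\rho$ with $E=\rho v$, $v\in L^2(d\rho)$. The only (immaterial) difference is that the paper applies the action-functional bound directly to the piecewise-constant interpolants $(\rho^{\tau_n},E^{\tau_n})$ via Lemma \ref{lem:unifestimates1}(ii), whereas you use the geodesic interpolants $(\widetilde\rho^{\tau_n},\widetilde E^{\tau_n})$ via part (iii); both have the same limits by Proposition \ref{prop:rholim}.
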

\begin{proof} 
Equation \eqref{eq:weakepstau}, together with the a priori estimate Lemma \ref{lem:unifestimates1} (i) yields, for any $\zeta\in C^\infty(\overline{\Omega_T})$:
\begin{align*} 
\int_0^\infty \int_\Omega E^{\tau_n}  \cdot \na \zeta \, dx \, dt
&  = -\int_\Omega \rho_{in} (x) \zeta(x,0) \, dx
 -\int_0^\infty  \int_\Omega  \rho^{\tau_n}(x,t) \pa_t \zeta(x,t) \, dx\,  dt \nonumber \\
 & \qquad
 + \mathcal O \left( (\|D^2\zeta\|_{L^\infty(\Omega \times (0,\infty))} 
+\| \pa_t\zeta\|_\infty +  T \| \pa^2_t \zeta\|_\infty)\tau_n  \right)
 \end{align*}
and passing to the limit, we deduce \eqref{eq:weakE}.

In order to pass to the limit in
\eqref{eq:pressureepstau} and derive \eqref{eq:weakEE}, we just need to check that we can pass to the limit in the nonlinear term $\rho^{\tau_n} \na \phi^{\tau_n} $.
We recall that $\na \phi^{\tau_n} $ converges strongly in $L^2$. And since $\rho^{\tau_n}$ is bounded in $L^1\cap L^\infty (\Omega_T)$ (thanks to the constraint $\rho\leq1$), it converges weakly in $L^2$.
We can thus pass  to the limit in \eqref{eq:pressureepstau}  to get
$$
 \int_0^\infty \int_\Omega E(x,t) \cdot \xi(x,t)\, dx\, dt   = \int _0^\infty \int_\Omega\rho(x,t)  \na \phi (x,t)\cdot \xi (x,t)  + \mu \rho(x,t) \div \xi(x,t)  + p(x,t) \div \xi(x,t) \, dx .
$$
for all $\xi \in C^\infty(\overline \Omega \times (0,T))$ with $\xi\cdot n=0$ on $\pa\Omega$, which gives \eqref{eq:weakEE}. 

We complete the proof as in \cite{MRS} by noticing that the function
$$ \Theta: (\mu,F)\mapsto 
\begin{cases}
\displaystyle \int_0^T\int_\Omega \frac {|F|^2}{\mu}  & \mbox{  if } F \ll \mu \mbox{ a.e. } t\in [0,T]\\
+\infty & \mbox{ otherwise}
\end{cases}
$$
is lower semi-continuous for the weak convergence of measure.
Together with the uniform bound $\Theta (\rho^{\tau_n} ,E^{\tau_n} ) = \int_0^T\int_\Omega \rho ^{\tau_n} |v^{\tau_n}|^2 \leq C$, it implies that $E$ is absolutely continuous with respect to $\rho$ and that there exists $v(t,\cdot) \in L^2 (d\rho(t))$ such that $E = \rho v$.
 Furthermore, we get
$$
\int_0^T \int_\Omega |v|^2 d\rho = \int_0^T \int_\Omega  \frac {|E|^2}{\rho} \leq 
\liminf  \int_0^T \int_\Omega  \frac {|E^{\tau_n}|^2}{\rho^{\tau_n}}
=
\liminf 
\int_0^T \int_\Omega |v^{\tau_n}|^2 d\rho^{\tau_n}
$$
which, together with \eqref{eq:disstau}, implies \eqref{eq:energy}.
\end{proof}

To complete the proof of Theorem \ref{thm:conv}, it only remains to show that $p\in P(\rho)$, which follows from  the following lemma:
\begin{lemma}\label{lem:pH}
The limiting pressure $p(x,t)$ satisfies $p\geq 0$ a.e. and 
$$
\int_\Omega p(x,t)(1-\rho(x,t)) \, dx = 0 \mbox{ a.e.  } t>0.
 $$
 In particular, $p(x,t)\in H^1_{\rho(t)}$ a.e. $t>0$.
 \end{lemma}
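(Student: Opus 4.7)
The plan is to derive the two defining conditions of $H^1_{\rho(t)}$ -- nonnegativity and the complementarity $p(1-\rho) = 0$ -- separately. The first is immediate: since $p^{\tau_n} \geq 0$ a.e.\ and $p^{\tau_n} \rightharpoonup p$ weakly in $L^2(\Omega_T)$, the weak closedness of the nonnegative cone yields $p \geq 0$ a.e. The substance of the lemma is the complementarity condition, which I would extract by passing to the limit in the discrete identity $p^{\tau_n}(1-\rho^{\tau_n}) = 0$.

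The main obstacle is that both $p^{\tau_n}$ and $\rho^{\tau_n}$ converge only weakly in $L^2(\Omega_T)$, so naively the product limit is unavailable. To break the symmetry I would exploit the asymmetric regularity: $p^{\tau_n}$ has extra spatial regularity (bounded in $L^2(0,T;H^1(\Omega))$ by Lemma \ref{lem:unifestimates2}), while $\rho^{\tau_n}$ has strong compactness in a negative Sobolev space. Specifically, Lemma \ref{lem:tech} gives $\|\rho^{\tau_n}(t) - \rho(t)\|_{H^{-1}(\Omega)} \leq W_2(\rho^{\tau_n}(t), \rho(t))$, and the uniform $W_2$-convergence supplied by Proposition \ref{prop:rholim} then upgrades this to strong convergence $\rho^{\tau_n} \to \rho$ in $L^\infty(0,T;H^{-1}(\Omega))$, hence in $L^2(0,T;H^{-1}(\Omega))$.

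With these two ingredients in hand, I would test the pointwise (in $t$) identity $\int_\Omega p^{\tau_n}(1-\rho^{\tau_n})\,dx = 0$ against a nonnegative $\chi \in C^\infty_c((0,T))$ and rewrite
\begin{equation*}
\int_0^T \chi(t) \int_\Omega p^{\tau_n}\,dx\,dt = \int_0^T \chi(t) \,\langle p^{\tau_n}(t), \rho^{\tau_n}(t) \rangle_{H^1, H^{-1}}\,dt.
\end{equation*}
The left-hand side converges to $\int_0^T \chi \int_\Omega p\,dx\,dt$ by the weak $L^2$ convergence of $p^{\tau_n}$. For the right-hand side, combining the weak-in-$L^2(0,T;H^1)$ convergence of $p^{\tau_n}$ with the strong-in-$L^2(0,T;H^{-1})$ convergence of $\chi\rho^{\tau_n}$ yields the limit $\int_0^T \chi \int_\Omega p\rho\,dx\,dt$. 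Subtracting, I obtain
\begin{equation*}
\int_0^T \chi(t) \int_\Omega p(1-\rho)\,dx\,dt = 0 \qquad \forall\, \chi \in C^\infty_c((0,T)),\ \chi \geq 0.
\end{equation*}

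Since $p \geq 0$ and $1-\rho \geq 0$ a.e., the integrand is nonnegative, so the arbitrariness of $\chi$ forces $\int_\Omega p(x,t)(1-\rho(x,t))\,dx = 0$ for a.e. $t \in (0,T)$; equivalently $p(1-\rho) = 0$ a.e.\ in $\Omega_T$. Since $p \in L^2(0,T;H^1(\Omega))$, for a.e.\ $t$ the slice $p(\cdot,t)$ belongs to $H^1(\Omega)$ and satisfies both $p(\cdot,t) \geq 0$ and $p(\cdot,t)(1-\rho(\cdot,t)) = 0$ a.e.\ in $\Omega$, which is precisely the condition $p(\cdot,t) \in H^1_{\rho(t)}$.
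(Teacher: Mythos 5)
Your argument is correct and is precisely the compensated-compactness route the paper alludes to (the paper itself only sketches the idea — weak $L^2(0,T;H^1)$ convergence of $p^\tau$ paired against strong dual-space convergence of $\rho^\tau$ coming from the $W_2$-equicontinuity — and refers the reader to [MRS] for details). Your proposal fills in exactly those details: the $p\geq 0$ step by weak closedness, the strong $L^2(0,T;(H^1)^*)$ convergence of $\rho^{\tau_n}$ via Lemma~\ref{lem:tech} and the uniform $W_2$ convergence from Proposition~\ref{prop:rholim}, the weak--strong duality pairing to pass the product $p^{\tau_n}\rho^{\tau_n}$ to the limit, and the final sign argument to turn the time-averaged identity into the a.e.-in-$t$ complementarity condition; this is the same argument, made explicit.
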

Since we have 
$\int_\Omega p^\tau(x,t)(1-\rho^\tau(x,t)) \, dx = 0$,
the difficulty is in passing to the limit in the product $p^\tau \rho^\tau$ (both functions only converge weakly). 
The crucial observation is that $p^\tau$ is bounded in $L^2(0,T;H^1(\Omega))$ while $\rho^\tau$ is bounded in $C^{1/2}(0,T;H^{-1}(\Omega))$ so that some compensated compactness type result can be used to pass to the limit.
We refer the reader to \cite{MRS} for a detailed proof of this lemma.

\section{Obstacle problem and strong convergence of the pressure}

We now want to prove Proposition \ref{prop:obs} (and its consequences), that is the fact that the limiting pressure $p$ is the unique solution of an obstacle problem (depending on $\rho(t_0)$).
This is not a surprising result (in fact, following \cite{MRS} one can show that for a.e. $t>0$ $p(t)$ solves an elliptic equation), but  we believe that it is interesting that, as we did in \cite{GKM} using the porous media approximation, there is a simple variational proof of this fact using the JKO scheme.
The key to the proof of Proposition \ref{prop:obs} is   Lemma \ref{lem:pressureineq}.


\begin{proof}[Proof of Proposition \ref{prop:obs}]
Equation \eqref{eq:vzeta}  together with \eqref{eq:EL5} give
$$
 \int_\Omega (-\na p^{n}  + \rho^n \na \phi^n -\mu \na \rho^n) \cdot \na \zeta\, dx 
 = \frac 1 \tau   \int_\Omega [\rho^n-\rho^{n-1}]\, \zeta  \, dx + \mathcal O\left(\frac 1 \tau  W_2^2(\rho^n,\rho^{n-1})\right)
$$
while Lemma \ref{lem:pressureineq} (in fact \eqref{eq:pressureobst} suffices here) implies
$$ \int_\Omega  ( \na p^n -\na\phi^n)\cdot \na p^{n}  \, dx  \leq 0.$$
Adding these inequalities, we find
\begin{align*} 
\int_\Omega ( \na p^n - \na \phi^n  ) \cdot \na ( p^n -\zeta)    \, dx 
& \leq
  \frac 1 \tau   \int [\rho^n-\rho^{n-1}]\, \zeta (x) \, dx + \int_\Omega(1- \rho^n) \na \phi^n \cdot \na \zeta\, dx \\
& \qquad\qquad  +  \mu \int_\Omega \na \rho^n \cdot\na \zeta\, dx + \mathcal O\left(\frac 1 \tau W_2^2 (\rho^n,\rho^{n-1})\right).
\end{align*}
We deduce
\begin{align*}
\int_{t_0}^{t_0+\delta}  \int  ( \na p^\tau  -\na \phi^\tau ) \cdot \na (p^\tau -  \zeta)  \, dx
& \leq  \int  [\rho^\tau(t_0+\delta)-\rho^\tau(t_0) ] \, \zeta (x) \, dx +\int_{t_0}^{t_0+\delta}  \int (1-\rho^\tau)\, \na \phi  ^\tau\cdot \na \zeta \, dx 
  \\
& +\mu \int_{t_0}^{t_0+\delta}  \int \na \rho^\tau \cdot \na \zeta \, dx+\mathcal O\left(\sum W_2^2 (\rho^n,\rho^{n-1})\right) + \mathcal O(\tau).
\end{align*}
We can now pass to the limit $\tau\to 0$:
For the left hand side, we use the weak convergence of $\na p^\tau$ in $L^2$, the lower semicontinuity of the $L^2$ norm and the fact that $\na \phi^\tau$ convergence strongly in $L^2$ (Lemma \ref{lem:phibd}).
For the right hand side, we use the convergence of $\int \rho^\tau(t)\zeta$ (which follows from Lemma \ref{lem:tech} and the uniform convergence of $\rho^\tau(t)$ with respect to $W_2$), 
the strong convergence of $\na \phi^\tau$ in $L^2$ and Lemma \ref{lem:unifestimates1} (i).
We deduce:
\begin{align*}
& \int_{t_0}^{t_0+\delta}  \int ( \na p -\na \phi ) \cdot \na (p - \zeta)   \, dx\, dt \\
&\qquad\qquad  \leq  \int  [\rho(t_0+\delta)-\rho(t_0) ] \, \zeta (x) \, dx  +\int_{t_0}^{t_0+\delta}  \int (1-\rho)\, \na \phi  \cdot \na \zeta \, dx\, dt +\mu \int_{t_0}^{t_0+\delta}  \int \na \rho  \cdot \na \zeta \, dx
\end{align*}
In particular, if we take $\zeta\in H^1_{\rho(t_0)}$, we find
\begin{align*}
& \int_{t_0}^{t_0+\delta}  \int ( \na p -\na \phi ) \cdot \na (p - \zeta)  \, dx\, dt \\
&\qquad\qquad \leq  \int  [\rho(t_0+\delta)-1 ] \, \zeta (x) \, dx + \int_{t_0}^{t_0+\delta}  \int (\rho(t_0)-\rho(t))\, \na \phi  \cdot \na \zeta \, dx \, d t+\mu \int_{t_0}^{t_0+\delta}  \int \na \rho  \cdot \na \zeta \, dx\\
&\qquad\qquad \leq 
\int_{t_0}^{t_0+\delta} W_2(\rho(t_0),\rho(t)) \, dt \| \na \phi  \cdot \na \zeta\|_{L^\infty(0,T;H^1(\Omega))} +\mu \int_{t_0}^{t_0+\delta}  \int \na \rho  \cdot \na \zeta \, dx
\end{align*}
where we use the fact that $\rho(t_0+\delta)\leq 1$ a.e. and Lemma \ref{lem:tech}.
Since $\rho\in C^{1/2}(0,T;\mathcal P(\Omega))$, we obtain
\begin{align*}
\int_{t_0}^{t_0+\delta}  \int( \na p -\na \phi ) \cdot \na (p - \zeta)  \, dx\, dt
& \leq 
C \| \na \phi  \cdot \na \zeta\|_{L^\infty(0,T;H^1(\Omega))} \delta^{3/2}+\mu \int_{t_0}^{t_0+\delta}  \int \na \rho  \cdot \na \zeta \, dx
\end{align*}
and  it remains to divide by $\delta$ and pass to the limit to get 
\begin{equation}\label{eq:hjkfd}
  \int (\na p(t_0) -\na  \phi(t_0)) \cdot  \na (p(t_0) - \zeta) \, dx\leq \mu  \int \na \rho (t_0)  \cdot \na \zeta \, dx =0\end{equation}
at all the Lebesgue points of $p$ (that is a.e. $t_0>0$). 

Since we already proved that $p(t_0)\in H^1_{\rho(t_0)}$ a.e. $t_0>0$ (see  Lemma \ref{lem:pH}),
this proves that $p(t_0) $ is the unique solution of \eqref{eq:obst0} a.e. $t_0>0$.

\medskip

It remains to prove that $\na p^\tau$ converges strongly in $L^2$.
We note that \eqref{eq:pressureobst} gives
$$
\int_0^T \int_\Omega |\na p^\tau|^2\, dx \, dt\leq \int_0^T \int_\Omega \na p^\tau\cdot \na \phi^\tau \, dx \, dt
$$
Since  $\na p^{\tau_n} $ converges to $\na p$   weakly in $L^2(\Omega_T)$
and $\na\phi^{\tau_n}$ converges strongly in $L^2$ to $\na \phi$, we deduce
\begin{equation}\label{eq:ineqliminf}
\int_0^T \int_\Omega |\na p|^2\, dx \, dt \leq \liminf \int_0^T \int_\Omega |\na p^{\tau_n}|^2\, dx \, dt
\leq \int_0^T \int_\Omega \na p \cdot \na \phi  \, dx \, dt.
\end{equation}
However, taking $\zeta(x)=2p(x,t_0)$ in \eqref{eq:hjkfd} we find
$$
- \int (\na p(t_0) -\na  \phi(t_0)) \cdot  \na p(t_0)  \, dx  \leq 0\qquad \mbox{ a.e. } t_0>0$$
which implies
$$ \int_0^T \int_\Omega \na p \cdot \na \phi  \, dx \, dt \leq  \int_0^T \int_\Omega |\na p|^2\, dx \, dt$$
It follows that the inequalities in \eqref{eq:ineqliminf} are in fact equalities  and thus that $\na p^{\tau_n} $ converges to $\na p$   strongly in $L^2(\Omega_T)$.
\end{proof}

\medskip
We can now prove the solution of the discrete time obstacle problem \eqref{eq:obst0tau} converges to the same limit as $p^\tau$:
\begin{proof}[Proof of Proposition \ref{prop:obs2}]
Equation \eqref{eq:obst0tau}
 implies in particular
$$
 \int_\Omega | \na q^\tau(t) |^2\, dx \leq  \int_\Omega \na \phi^\tau(t)  \cdot \na q^\tau   (t) dx 
$$
so that $ \na q^\tau$ is bounded in $L^2(\Omega_T)$. Note also that $q^\tau$ is bounded in $L^\infty(\Omega_T)$ by the maximum principle applied to the obstacle problem.
We deduce that $q^{\tau_n}$ and $\na q^{\tau_n}$ converge weakly (up to a subsequence) to $\bar q $ and $\na \bar q$ with
\begin{equation}\label{bhjk|}
\int_0^T \int_\Omega | \na \bar q |^2\, dx \, dt \leq \liminf \int_0^T \int_\Omega | \na q^{\tau_n} |^2\, dx \leq \int_0^T \int_\Omega \na \phi   \cdot \na \bar q   \,  dx \, dt.
\end{equation}
Furthermore, we can proceed as in the proof Lemma \ref{lem:pH} to show that $\bar q(t_0) \in H^1_{\rho(t_0)}$ a.e. $t_0>0$.
Next, taking $\zeta = p^\tau$ in \eqref{eq:obst0tau} and passing to the limit (using the strong convergence of $\na p^\tau$ and the lower-semicontinuity of the $L^2$ norm), we get
$$
\int_0^T \int_\Omega [ \na \bar q(x,t)- \na \phi (x,t)] \cdot  [\na \bar q(x,t)-\na p(x,t)]  dx\, dt \leq 0.
 $$
Furthermore, since $\bar q(t_0) \in H^1_{\rho(t_0)}$ a.e. $t_0>0$, we can take $\zeta = \bar q$ in 
\eqref{eq:obst0} to get
$$
\int_0^T \int_\Omega [ \na p(x,t)- \na \phi (x,t)] \cdot  [\na p(x,t) - \na \bar q(x,t)]  dx\, dt \leq 0.
 $$
Together, these inequalities imply
$$
\int_0^T \int_\Omega [ \na \bar q(x,t)-\na p(x,t)] ^2 \, dx\, dt \leq  0
 $$
and so $\bar q=p$ a.e..
Finally, this implies that $\int_0^T \int_\Omega | \na \bar q |^2\, dx \, dt= \int_0^T \int_\Omega \na \phi   \cdot \na\bar  q   \,  dx \, dt$ and \eqref{bhjk|} yields the strong convergence of $q^\tau$.
\end{proof}

\medskip

The proof of Corollary \ref{cor:pressure} is classical and is given here for the sake of completeness:

\begin{proof}[  Proof of Corollary \ref{cor:pressure}]
We recall that the pressure $p$ has been redefined (on a set of measure zero) so that it coincides with the solution of the obstacle problem \eqref{eq:obst0} for all $t>0$.

Given a test function $u\in\mathcal D(\Omega_T)$ such that $\| u\|_{L^\infty} \leq 1$ the functions $\zeta_\pm= p(1\pm u)$ satisfies $\zeta_\pm(t) \in H^1_{\rho(t)}$ for all $t>0$ and so \eqref{eq:obst0} implies
$$
\pm \int_\Omega (-\na p(t) + \na \phi(t) )\cdot \na (p(t) u(t) )\, dx \leq 0 \quad\mbox{ for all } t>0 
$$
and the complementarity condition \eqref{eq:compcond} follows.

Next, we rewrite \eqref{eq:obst0}(for a fixed $t>0$) as
\begin{equation}\label{eq:obst32} 
\int_\Omega (\na p-\na \phi)\cdot (\na p -\na \zeta) \, dx\leq 0 \qquad \forall \zeta\in H^1_{\rho(t)}.
\end{equation}
Taking $\zeta=0$ and $\zeta =2p(t)$ in this inequality immediately gives
$$ \int_\Omega (\na p-\na \phi)\cdot \na p \, dx =0 $$
and using the fact that $\rho \na p=\na p$ since $p\in H^1_{\rho(t)}$, we get the orthogonality condition
\begin{equation}\label{eq:ortho} 
\int_\Omega \na p(t) \cdot v (t)\, dx=0
\end{equation}
where the velocity $v(t)$ satisfies $\rho(t) v(t) = \rho(t)\na \phi(t) - \na p(t)$.
Furthermore, \eqref{eq:obst32} now implies
$$
\int_\Omega (\na \phi -\na p) \cdot  \na \zeta \, dx\leq 0 \qquad \forall \zeta\in H^1_{\rho(t)}
$$
and using the fact that $\rho  \na \zeta= \na \zeta$ for  $\zeta\in H^1_{\rho(t)}$, 
we deduce
$$
\int_\Omega v(t) \cdot  \na \zeta \, dx\leq 0 \qquad \forall \zeta\in H^1_{\rho(t)}
$$
that is $v(t) \in C(\rho(t))$ for all $t>0$. Together with \eqref{eq:ortho} this says that $v(t) = P_{C(\rho(t))} (\na \phi(t))$.
\end{proof}

\medskip

We end this section with a  lemma which will be useful in the proof of Theorem \ref{thm:charac}:
\begin{lemma}\label{lem:pos}
Let $p(t)$ be the unique solution of the obstacle problem \eqref{eq:obst0} and denote $\mathcal P(t)=\{p(t)>0\}$.
Then $\mu_t  = \Delta p -  \Delta \phi \chi_{\mathcal P(t)}$ is a non-negative radon measure supported on $\pa \mathcal P(t)$.
\end{lemma}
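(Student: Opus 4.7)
The plan is to combine the complementarity condition $p(\Delta p - \Delta\phi) = 0$ from Corollary~\ref{cor:pressure} with the variational inequality~\eqref{eq:obst0} to realize $\mu_t$ as a non-negative free-boundary measure.

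I would first localize the support of $\mu_t$. On the open set $\mathcal P(t) = \{p(t) > 0\}$, the complementarity yields $\Delta p = \Delta \phi$ in $\mathcal D'(\mathcal P(t))$, and since $\chi_{\mathcal P(t)} = 1$ there, $\mu_t$ vanishes on $\mathcal P(t)$. Passing to a H\"older-continuous representative of $p$ (available since \eqref{eq:obst0} is a standard obstacle problem with bounded source $\Delta \phi \in L^\infty$), the set $\mathcal P(t)$ is open and $p \equiv 0$ on $\mathcal Z(t) := \mathrm{int}(\Omega \setminus \overline{\mathcal P(t)})$, so $\Delta p = 0$ in $\mathcal D'(\mathcal Z(t))$ and $\chi_{\mathcal P(t)} = 0$ there, giving $\mu_t = 0$ on $\mathcal Z(t)$. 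Thus $\mu_t$ is supported on $\partial \mathcal P(t) = \Omega \setminus (\mathcal P(t) \cup \mathcal Z(t))$ and its absolutely continuous part vanishes.

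For the sign, I would use the distributional identity $\sigma := -\Delta p + \Delta \phi \geq 0$ on $\{\rho(t)=1\}^\circ$. Indeed, testing \eqref{eq:obst0} with $\zeta = p + \eta$ for $\eta \in H^1_{\rho(t)}$, $\eta \geq 0$, gives $\int (\nabla p - \nabla \phi) \cdot \nabla \eta\, dx \geq 0$, i.e.\ $\langle \sigma, \eta \rangle \geq 0$; since such $\eta$ vanish on $\{\rho<1\}$, this controls $\sigma$ only on $\{\rho(t)=1\}^\circ$, where by standard obstacle-problem regularity $\sigma$ is a non-negative Radon measure supported on $\{p=0\}$ (by complementarity). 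The decomposition
\[
\mu_t = -\sigma + \Delta \phi\, \chi_{\{p=0\}}
\]
together with the support analysis above and the maximum-principle bound $\Delta \phi = \phi - 1 \leq 0$ on $\{\rho=1\}$ shows that both terms vanish on $\mathcal Z(t) \cap \{\rho(t) = 1\}^\circ$, leaving only the singular part of $\mu_t$ concentrated on $\partial \mathcal P(t)$. There it equals the non-negative normal-derivative jump $[\partial_\nu p]\, d\mathcal H^{d-1}$, non-negative because $p \geq 0$ inside $\mathcal P(t)$ with boundary value $0$.

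The main obstacle is justifying the singular-part analysis without a priori regularity of $\partial \mathcal P(t)$ or of $\partial\{\rho(t)=1\}$. In particular, ruling out a singular contribution of $\sigma$ at the interior free boundary $\partial \mathcal P(t) \cap \{\rho(t)=1\}^\circ$ (the smooth-fit condition) is classical once $p \in C^{1,1}$, but can alternatively be handled by a direct approximation argument: replace $\chi_{\mathcal P(t)}$ by $H_\epsilon(p)$ (with $H_\epsilon$ a smooth non-decreasing Heaviside approximation) in the integration by parts defining $\langle \mu_t, \psi \rangle$ for $\psi \in C_c^\infty(\Omega)$, $\psi \geq 0$, use Stampacchia's theorem ($\nabla p = 0$ a.e.\ on $\{p = 0\}$) to pass to the limit, and interpret the concentrating term $\int \psi H_\epsilon'(p) |\nabla p|^2\, dx$ via the coarea formula as a non-negative surface integral on $\partial \mathcal P(t)$.
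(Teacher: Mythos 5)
Your support localization and the pivot to a regularization argument are both sound and close in spirit to the paper, but two steps do not hold up under scrutiny.

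First, the $\sigma$-decomposition route is a dead end for the sign. With $\sigma=-\Delta p+\Delta\phi\geq 0$ on $\{\rho(t)=1\}^{\circ}$ and $\Delta\phi\leq 0$ there, \emph{both} terms in $\mu_t=-\sigma+\Delta\phi\,\chi_{\{p=0\}}$ are non-positive wherever the inequality for $\sigma$ applies; the positive contribution to $\mu_t$ lives entirely in the singular part concentrated on $\partial\mathcal P(t)\setminus\{\rho(t)=1\}^{\circ}$, precisely where the inequality $\sigma\geq 0$ gives no information. You partly acknowledge this, but then fall back on the normal-derivative jump $[\partial_\nu p]$ being non-negative, which presupposes the very free-boundary regularity you had just flagged as unavailable. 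So this branch of the argument is circular rather than a ``first proof modulo regularity.''

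Second, and more substantively, your $H_\epsilon(p)$ regularization is indeed the paper's mechanism (the paper uses the piecewise-linear $Q_\delta$), but the step that actually closes the argument is missing from your sketch. After inserting $H_\epsilon(p)$, one obtains
\[
\langle\mu_t,\psi\rangle=\underbrace{\Bigl(-\!\int \nabla p\cdot\nabla(\psi H_\epsilon(p))-\!\int \Delta\phi\,\psi H_\epsilon(p)\Bigr)}_{\text{must be controlled}}\;+\;\int\psi H'_\epsilon(p)|\nabla p|^2\;+\;o(1),
\]
where the $o(1)$ terms are handled by Stampacchia and dominated convergence, as you say. Your proposal treats the concentrating term and the $o(1)$ pieces, but says nothing about the first bracket, which is not obviously small or signed. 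The paper's crucial (and easy-to-miss) observation is that this bracket equals $-\int(\nabla p-\nabla\phi)\cdot\nabla(\psi Q_\delta(p))$ and is controlled by the variational inequality \eqref{eq:obst0}: the direction $\psi Q_\delta(p)$ is a \emph{two-sided} admissible perturbation of $p$ in $H^1_{\rho(t)}$ (both $p+c\,\psi Q_\delta(p)$ and $p-c\,\psi Q_\delta(p)$ stay in $H^1_{\rho(t)}$ for small $c$, because $0\leq\psi\leq 1$ and $\delta Q_\delta(p)\leq p$), so the corresponding Gâteaux derivative vanishes and the bracket is in fact zero. Without invoking the variational inequality here, the argument does not conclude: a priori the first bracket could be as negative as the concentrating term is positive. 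Finally, the coarea formula is unnecessary — the concentrating term is non-negative simply because $H'_\epsilon\geq 0$ — and trying to turn it into a bona fide surface integral would reintroduce exactly the free-boundary regularity issues you set out to avoid.
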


\begin{proof}
First, we recall that  the solution  of the obstacle problem \eqref{eq:obst0} satisfies  
$p\in C^{1,1}_{loc}(O(t))$ with $\mathcal O(t) := \mathrm{Int} (\{\rho(t)=1\})$
and
\begin{equation}\label{eq:pob}
 \Delta p =\Delta \phi\chi_{\{p>0\}} \mbox{ in } \mathcal O(t).
\end{equation}

Next, we  show that $\supp \mu_t \subset \pa \mathcal{P}(t)\setminus \mathcal{O}(t)$:
For all smooth test functions $\zeta\in \mathcal D(\Omega)$, by definition of $\mu_t$ we have
$$ \mu_t(\zeta) = \int_\Omega (-\na p \cdot \na \zeta -\Delta \phi  \chi_{\mathcal{P}(t)} \zeta)\, dx.$$
Clearly, if $\zeta$ is supported in $\{ p(\cdot,t)=0\}$, the fact that $p\in H^1(\Omega)$ implies that $\na p=0$ a.e. in $\{p=0\}$ and thus $ \mu_t(\zeta)=0$.
And if $\zeta$ is supported in $\mathcal O(t)$, \eqref{eq:pob} implies
$$ \mu_t(\zeta) =0.$$
Since $\mathcal O(t)$ is an open set, we deduce that
$$ \supp(\mu_t) \cap \mathrm{Int} (\{ p(t)=0\}) =\emptyset, \qquad \supp(\mu_t) \cap \mathcal O(t)=\emptyset.
$$

On the  other hand, note that $\mathrm{Int}(\mathcal{P}(t))\subset \mathcal O(t)$. Indeed if $p(t)>0$ in $B_\delta(x_0)$, then $1-\rho(t)=0$ a.e. in $B_\delta(x_0)$. It follows that $x_0\in \mathcal O(t)$. Thus we can conclude that 
 $\mu_t$ is supported in  $\pa\mathcal{P}(t)\setminus \mathcal O(t)$.

\medskip

Next we show that $\mu_t$ is nonnegative. 
Define the function
$$Q_\delta(s):=\begin{cases}
\frac{s}{\delta} & \mbox{ if } s\in[0,\delta];\\
1 & \mbox{ if } s\geq \delta.
\end{cases}
$$
For any test function $\zeta \in \mathcal D(\Omega)$ satisfying $0\leq \zeta(x)\leq 1$, we write
\begin{align*}
\mu_t(\zeta) 
& = \int_\Omega - \na p \cdot \na \zeta -\Delta \phi  \chi_{\mathcal{P}(t)} \zeta\, dx\\
& = \int_\Omega -\na p \cdot \na (\zeta Q_\delta(p)) -\Delta \phi  \zeta Q_\delta(p) \, dx
 + \langle \Delta p, \zeta (1-Q_\delta(p)) \rangle  - \int_\Omega \Delta \phi \chi_{\mathcal{P}(t)}\zeta(1-Q_\delta(p)) \, dx.
\end{align*}
Using  \eqref{eq:obst0} with test function $p+ \zeta Q_\delta(p)$ (which is in $H^1_{\rho(t)}$) we see that the first integral is non-negative. Next note that
 $$
 \langle \Delta p, \zeta (1-Q_\delta(p)) \rangle  =  \int \nabla p \cdot \nabla  \varphi(Q_\delta(p)-1) + \nabla p \cdot \varphi Q'_{\delta}(p) \nabla p.
 $$
 The second term in above equality is nonnegative since $Q_{\delta}$ is increasing.
 For the first term, we note that $ \nabla  \varphi(Q_\delta(p)-1)$ converges a.e. to $\nabla  \varphi \chi_{\{p=0\}}$.  Lebesgue dominated convergence theorem implies that it converges in $L^2$ and thus the first term converges to
zero since $\na p =0 $ a.e. in $\{ p=0\}$. 
 
 \medskip

  Thus
$$
\mu_t(\zeta) \geq - \int_\Omega \Delta \phi \chi_{\mathcal{P}(t)}\zeta(1-Q_\delta(p)) \, dx.$$
Finally, we have $ \chi_{\mathcal{P}(t)} (1-Q_\delta(p)) \to 0$ a.e. in $\Omega$ when $\delta\to0$. Sending $\delta\to 0$ and using Lebesgue dominated convergence theorem, we can conclude
that $\mu_t(\zeta) \geq 0$ and the result follows.

\end{proof}

\section{Uniqueness}\label{sec:unique}
This section is devoted to the proof of Proposition \ref{prop:unique}. The proof uses ideas first introduced in \cite{PQV}, but which must be carefully adapted due to the lack of appropriate regularity of the potential $\phi$ (namely, the fact that we do not have $\phi\in L^\infty(0,T;W^{2,\infty}(\Omega))$).
We consider two functions $(\rho_1,p_1)$ and $(\rho_2,p_2)$, solutions of
$$
\begin{cases}
\pa_t \rho -\mu \Delta\rho + \div (\rho \na \phi-\na p) =0 , \qquad p\in P(\rho), \\
\phi (x) = \int_\Omega G(x,y) \rho(y)\, dy \\
\end{cases}
$$
in the sense of Definition \ref{def:weak} and with same initial data $\rho_{in}$. 
We have in particular
$$
\int_\Omega \rho_{in} (x) \psi(x,0)\, dx + \int_0^\infty \int_\Omega (\rho_i \, \pa_t\psi +  \rho_i \na \phi_i \cdot \na \psi+ (\mu\rho_i+p_i)\, \Delta \psi ) \, dx \, dt= 0 , \qquad i=1,\;2
$$
for any function $\psi\in C^\infty_c(\overline \Omega \times [0,\infty))$ satisfying $\na \psi \cdot n=0$ on $\pa\Omega$.
To simplify the notation, we will write $G* \rho_i$ for $\phi_i=\int_\Omega G(x,y) \rho_i(y)\, dy$ below, even though it is not really  a convolution.

For a test function $\psi\in C^\infty_c(\overline \Omega_T) $ satisfying $\psi(T)=0$ and  $\na \psi \cdot n=0$ on $\pa\Omega$, we can write
\begin{equation}\label{eq:psi0}
\int_0^T \int_\Omega
(\rho_1-\rho_2+p_1-p_2) \Big(
A\pa_t \psi + (\mu A+B)\Delta \psi +A \na \phi_1\cdot \na \psi + A\na G*(\rho_2 \na \psi) \Big) \, dx\, dt=0
\end{equation}
where
\begin{align*}
0\leq A= \frac{\rho_1-\rho_2}{\rho_1-\rho_2+p_1-p_2}\leq 1\\
0\leq B= \frac{p_1-p_2}{\rho_1-\rho_2+p_1-p_2}\leq 1
\end{align*}
(with the convention that $A=0$ whenever $\rho_1-\rho_2=0$ and $B=0$ whenever $p_1-p_2=0$).
To get this equality, we used in particular the fact that
\begin{align*}
\int_0^T \int_\Omega \rho_2 \na (\phi_1-\phi_2) \cdot \na\psi\, dx \, dt 
& =\int_0^T \int_\Omega \int_\Omega   \rho_2(x) \na G (x,y) (\rho_1(y)-\rho_2(y)) \cdot \na\psi(x)\, dx \,dy \, dt \\
& =\int_0^T \int_\Omega  \int_\Omega (\rho_1(x)-\rho_2(x))   \na G (x,y)  \rho_2(y) \na\psi(y)\, dx \,dy\, dt .
\end{align*}
In order to prove uniqueness, we want to solve the dual equation
$$
\begin{cases}
A\pa_t \psi + (\mu A+B)\Delta \psi +A \na \phi_1\cdot \na \psi + A\na G*(\rho_2 \na \psi) = A h  \quad\hbox{ in }  \Omega_T;\\
\psi(x,T)=0 \,\,\mbox{ on } \Omega, \qquad \na \psi \cdot n=0 \,\,\mbox{ on } \pa\Omega.
\end{cases}
$$
for any reasonable test function $h$.
Since this equation is not uniformly parabolic, we first need to regularize  $A$, $B$ and $\phi_1$:
As in \cite{PQV}, we first consider sequences $A_n$, $B_n$ of smooth bounded functions such that
\begin{align*}
\| A-A_n\|_{L^2} \leq C/n, \qquad 1/n\leq A_n\leq 1 , \\
\|B-B_n\|_{L^2} \leq C/n, \qquad 1/n\leq B_n\leq 1.
\end{align*}

In addition, since $D^2\phi_1\notin L^\infty$, we approximate $\phi_1$ by a appropriate sequence of function $\phi_{1,n}$. More precisely, we will use the following lemma:
\begin{lemma}\label{lem:phin}
There exists $\lambda,\gamma>0$ and a sequence $\phi_{1,n}$ such that
\begin{equation}\label{eq:phiapprox}
\| \na \phi_{1,n} - \na \phi_1 \|_{L^2(\Omega)} \leq \gamma /\sqrt n , \qquad  \| D^2 \phi_{1,n}\|_{L^{\infty} (\Omega)} \leq \lambda \ln( n) 
\end{equation}
\end{lemma}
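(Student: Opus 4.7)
The plan is to obtain $\phi_{1,n}$ by mollifying $\phi_1$ at a scale $\eps_n$ chosen to optimally balance a diverging $L^\infty$ bound on $D^2$ against the vanishing $L^2$ error in $\na\phi_1$. The key quantitative input is the sharp Calder\'on--Zygmund estimate for \eqref{eq:phi0}: since $\|\rho_1\|_{L^\infty}\leq 1$ yields $\|\rho_1\|_{L^p(\Omega)}\leq|\Omega|^{1/p}\leq C$, the classical fact that the $L^p\to L^p$ operator norm of the Riesz transforms grows linearly in $p$ as $p\to\infty$ gives
$$
\|D^2\phi_1\|_{L^p(\Omega)} \leq C\,p\qquad \text{for all } p\geq 2,
$$
with $C$ depending only on $\Omega$; this is applied after a preliminary $W^{2,p}$-bounded extension of $\phi_1$ to a neighborhood of $\overline\Omega$ so that interior mollification is well defined.

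Next, I would let $\eta\in C^\infty_c(\R^d)$ be a standard mollifier, set $\eta_\eps(x)=\eps^{-d}\eta(x/\eps)$, and define $\phi_1^\eps:=\eta_\eps*\widetilde\phi_1$, where $\widetilde\phi_1$ is the extension above. The two basic mollification estimates are
$$
\|\na\phi_1^\eps-\na\phi_1\|_{L^2(\Omega)} \leq C\,\eps\,\|D^2\widetilde\phi_1\|_{L^2} \leq C\,\eps,
$$
and, for any $p\geq 2$, H\"older's inequality together with the scaling $\|\eta_\eps\|_{L^{p'}}=C\eps^{-d/p}$ (where $1/p+1/p'=1$) give
$$
\|D^2\phi_1^\eps\|_{L^\infty(\Omega)} \leq \|\eta_\eps\|_{L^{p'}}\,\|D^2\widetilde\phi_1\|_{L^p} \leq C\,\eps^{-d/p}\,p.
$$

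Finally I would pick $\eps_n:=1/\sqrt n$ and $p:=\log n$. Then $\eps_n^{-d/p}=n^{d/(2\log n)}=e^{d/2}$ is bounded, so that
$$
\|\na\phi_1^{\eps_n}-\na\phi_1\|_{L^2(\Omega)}\leq \gamma/\sqrt n,\qquad \|D^2\phi_1^{\eps_n}\|_{L^\infty(\Omega)} \leq \lambda\,\log n,
$$
and $\phi_{1,n}:=\phi_1^{\eps_n}$ does the job. The one nontrivial step is the linear-in-$p$ growth of the Calder\'on--Zygmund constant (which follows, e.g., from Marcinkiewicz interpolation applied to the weak-type $(1,1)$ bound on Riesz transforms): it is the only place where the particular scaling $\log n$, rather than a power of $n$, is forced, and it is what allows the duality argument in Section~\ref{sec:unique} to absorb the singular drift coefficient $\na\phi_1$. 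Everything else is a routine mollification computation.
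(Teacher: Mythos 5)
Your argument is correct but takes a genuinely different route from the paper's. The paper mollifies the \emph{density} $\rho_1$ at scale $1/n$ to produce $\rho_{1,n}$ and sets $\phi_{1,n}=G*\rho_{1,n}$, keeping the elliptic structure intact throughout: it then gets $\|D^2\phi_{1,n}\|_{L^\infty}\lesssim \log n$ by splitting the kernel integral using the pointwise bounds $|\partial_{ij}G(x,y)|\lesssim|x-y|^{-d}$ (cited from the literature) together with a $C^s$ bound on $\rho_{1,n}$ of size $n^s$, and optimizing over the H\"older exponent $s=1/\log n$; the $L^2$ gradient error is obtained indirectly by first proving $\|\phi_1-\phi_{1,n}\|_{L^1}\leq C/n$ and then multiplying the equation for the difference $z_n$ by $z_n$ to extract $\|\na z_n\|_{L^2}\leq C/\sqrt n$. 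You instead mollify the \emph{potential} directly, and encode the same singular-kernel content in the linear-in-$p$ growth of the Calder\'on--Zygmund constant, $\|D^2\phi_1\|_{L^p}\leq Cp$; the two mollification estimates then fall out immediately, and the choice $\eps_n=1/\sqrt n$, $p=\log n$ balances them. The underlying analytic input is the same — the bound $|\partial_{ij}G|\lesssim|x-y|^{-d}$ is precisely what yields both the paper's kernel splitting and your CZ constant growth — but your packaging is more modular: it avoids the detour through $L^1$ and the PDE for $z_n$, replacing it with the standard mollification bound $\|\na\phi_1^\eps-\na\phi_1\|_{L^2}\leq C\eps\|D^2\phi_1\|_{L^2}$. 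One point you should be careful to spell out, since $\Omega$ is a bounded domain with Robin boundary conditions: the linear-in-$p$ growth of the $W^{2,p}$ constant for \eqref{eq:phi0} is not quite the whole-space Riesz transform statement and requires either the explicit kernel bounds (as the paper cites) or a reflection/localization argument; similarly the $W^{2,p}$-bounded extension operator should be chosen uniformly in $p$, which is routine but worth a word. Neither is a gap, just details to acknowledge.
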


Postponing the proof of this lemma to the end of this section, we can now solve the approximate equation
\begin{equation}\label{eq:psin}
\begin{cases}
\pa_t \psi_n + \left(\mu+\frac{B_n}{A_n}\right)\Delta \psi_n + \na \phi_{1,n}\cdot \na \psi_n + \na G*(\rho_2 \na \psi_n) =  h \quad \hbox{ in } \Omega_T;\\
\psi_n(x,T)=0  \,\,\mbox{ on } \Omega, \qquad \na \psi_n \cdot n=0 \,\,\mbox{ on } \pa\Omega.
\end{cases}
\end{equation}
since the diffusion coefficient satisfies in particular $\frac 1 n \leq \frac{B_n}{A_n}\leq n$.
In order to pass to the limit $n\to\infty$, we will need the following a priori estimates on  $\psi_n$:
\begin{lemma}\label{lem:bdpsin}
There exists a constant $C$, depending on $h$, but independent of $n$ such that
$$ \| \psi_n\|_{L^\infty(\Omega_T)}\leq C T$$ and 
$$\sup_{t\in(0,T)}\int_\Omega |\na \psi_n(t)|^2\, dx \leq e^{CT} n^{\lambda T}, \quad 
 \int_0^T\int_\Omega  \frac{B_n}{A_n}|\Delta \psi_n|^2 \, dx\,dt \leq e^{CT} n^{\lambda T}.
$$
\end{lemma}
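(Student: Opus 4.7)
The equation \eqref{eq:psin} is a backward, uniformly parabolic linear equation with smooth coefficients (ellipticity $\geq 1/n$); after time-reversal $s=T-t$, it becomes a standard forward parabolic equation with zero initial data, for which existence and smoothness of $\psi_n$ follow from classical linear theory. My plan is to derive the three bounds by energy methods, taking care to obtain the polynomial $n^{\lambda T}$ growth rather than the exponential $e^{nT}$ that a naive weighted-Young approach would give. The key improvement comes from integrating the drift $\na\phi_{1,n}\cdot\na\psi_n$ by parts instead of absorbing it with the diffusion weight, so that $\|D^2\phi_{1,n}\|_\infty \lesssim \ln n$ appears rather than $1/(\mu+B_n/A_n)\leq n$.

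For the $L^\infty$ bound, I would compare $\psi_n$ with $\pm\|h\|_\infty(T-t)$: since the operator on the left of \eqref{eq:psin} annihilates functions of $t$ alone (the diffusion and both drift terms involve spatial gradients), the function $w = \psi_n + \|h\|_\infty(T-t)$ solves a backward parabolic inequality with nonpositive terminal data, and a maximum-principle or Stampacchia argument (treating the nonlocal term $\na G\ast(\rho_2\na w)$ as a lower-order perturbation bounded in $L^2$ by $C\|\na w\|_{L^2}$, then absorbing via Gronwall) gives $w\geq 0$, hence $\psi_n\geq -\|h\|_\infty T$; the symmetric argument gives the upper bound.

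For the $H^1$ and diffusion bounds, I multiply \eqref{eq:psin} by $-\Delta\psi_n$ and use $\na\psi_n\cdot n=0$ on $\pa\Omega$ to obtain
\begin{equation*}
-\tfrac{d}{dt}\tfrac12\int|\na\psi_n|^2 + \int(\mu+B_n/A_n)|\Delta\psi_n|^2 = \int\bigl(\na\phi_{1,n}\cdot\na\psi_n + \na G\ast(\rho_2\na\psi_n) - h\bigr)\Delta\psi_n\, dx.
\end{equation*}
In the drift term $\int\na\phi_{1,n}\cdot\na\psi_n\,\Delta\psi_n$, I integrate by parts once more to rewrite it as $-\int D^2\phi_{1,n}(\na\psi_n,\na\psi_n) + \tfrac12\int\Delta\phi_{1,n}|\na\psi_n|^2$ plus a boundary trace, which is controlled by $C(\ln n)\|\na\psi_n\|_{L^2}^2$ using \eqref{eq:phiapprox}; the boundary trace is handled by the Robin condition on $\phi_{1,n}$ together with the interpolation $\int_{\pa\Omega}|\na\psi_n|^2 \leq \epsilon\|\Delta\psi_n\|_{L^2}^2 + C_\epsilon\|\na\psi_n\|_{L^2}^2$. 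For the nonlocal drift, elliptic regularity for the Green's function of \eqref{eq:phi0} gives $\|\na G\ast(\rho_2\na\psi_n)\|_{H^1}\leq C\|\rho_2\na\psi_n\|_{L^2}\leq C\|\na\psi_n\|_{L^2}$, so integrating $\Delta\psi_n$ by parts once more yields a bound $C\|\na\psi_n\|_{L^2}^2$. Finally, weighted Young on $\int h\Delta\psi_n$ absorbs $\tfrac12\int(\mu+B_n/A_n)|\Delta\psi_n|^2$ at the cost of $\tfrac n2\|h\|_{L^2}^2$. Collecting these bounds and applying Gronwall backward in time from $\na\psi_n(T)=0$ gives $\sup_t\int|\na\psi_n(t)|^2 \leq e^{CT}n^{\lambda T}$; integrating the differential inequality in time then yields the bound on $\int_0^T\int(B_n/A_n)|\Delta\psi_n|^2$, using $B_n/A_n\leq \mu+B_n/A_n$.

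The main obstacle is avoiding the $e^{nT}$ blow-up, which rests on the integration-by-parts trick in the drift term, trading the $1/n$ ellipticity loss for the much milder $\ln n$ penalty from \eqref{eq:phiapprox}. Secondary issues are the control of the boundary traces produced by the repeated integration by parts (handled via the Robin condition and a trace interpolation) and the treatment of the nonlocal drift term, both of which are absorbed using elliptic regularity for the equation \eqref{eq:phi0} satisfied by the potential.
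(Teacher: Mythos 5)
Your strategy is essentially the paper's — comparison principle for the $L^\infty$ bound, multiply \eqref{eq:psin} by $-\Delta\psi_n$, integrate the drift $\na\phi_{1,n}\cdot\na\psi_n\,\Delta\psi_n$ by parts to trade the $1/n$ ellipticity for the $\ln n$ from Lemma~\ref{lem:phin}, elliptic regularity for the nonlocal term. But there is a genuine gap in your treatment of the source term $\int h\,\Delta\psi_n$, and the fix matters.

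You propose to absorb $\int h\,\Delta\psi_n$ by weighted Young against $\tfrac12\int(\mu+B_n/A_n)|\Delta\psi_n|^2$, accepting the cost $\tfrac n2\|h\|_{L^2}^2$ because $\mu+B_n/A_n\ge 1/n$. That extra factor of $n$ is fatal. Running Gronwall with a source of size $O(n)$ yields $\sup_t\int|\na\psi_n|^2\lesssim n\,e^{CT}n^{\lambda T}$, not $e^{CT}n^{\lambda T}$ as you claim; your last sentence does not follow from the estimate you just derived. Worse, the extra $n$ wrecks the downstream uniqueness argument in Proposition~\ref{prop:unique}: the terms there are bounded by $C n^{-1/2}\,\bigl(\sup_t\|\na\psi_n\|_{L^2}^2\bigr)^{1/2}$, and with the corrupted bound this becomes $C n^{-1/2}\,n^{1/2}\,n^{\lambda T/2}$, which does not tend to zero for any $T>0$. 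So this is not merely a lost log factor — it invalidates the whole scheme.

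The way out is to treat the $h$-term the same way you already treat the drift and nonlocal terms, namely by moving the Laplacian off $\psi_n$. Integrating by parts once (using $\na\psi_n\cdot n=0$) gives $\int h\,\Delta\psi_n=-\int\na h\cdot\na\psi_n$, which is bounded by $C+C\|\na\psi_n\|_{L^2}^2$ with $C$ independent of $n$; the paper instead integrates by parts twice and uses the $L^\infty$ bound, $|\int\Delta h\,\psi_n|\le \|\Delta h\|_{L^1}\|\psi_n\|_{L^\infty}\le C$. Either variant keeps the Gronwall source $n$-independent and delivers the claimed $e^{CT}n^{\lambda T}$, which is precisely the growth rate that makes $T<1/\lambda$ work in the uniqueness proof.

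One more minor point: when you integrate the drift term by parts you produce the boundary term $-\tfrac12\int_{\pa\Omega}(\na\phi_{1,n}\cdot n)\,|\na\psi_n|^2\,d\H^{d-1}$, not an unstructured trace of $|\na\psi_n|^2$. Since $\phi_{1,n}$ satisfies the Robin condition and $0\le\phi_{1,n}\le1$, this term has a favorable sign (or vanishes in the Neumann case) and simply drops; there is no need for the trace interpolation $\int_{\pa\Omega}|\na\psi_n|^2\le\varepsilon\|\Delta\psi_n\|^2+C_\varepsilon\|\na\psi_n\|^2$, which would again feed a $\|\Delta\psi_n\|^2$ contribution into the Gronwall at exactly the scale you are trying to avoid.
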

\begin{proof}
The first bound follows from the comparison principle (using $\| h\|_{L^\infty}\times (T-t)$ as barrier).

For the second bound, we multiply by $-\Delta \psi_n$ to get:
\begin{align*}
\frac{d}{dt} \int_\Omega |\na \psi_n|^2\, dx 
& = \int _\Omega  \left(\mu+\frac{B_n}{A_n}\right)|\Delta \psi_n|^2\,dx +\int _\Omega (\na \phi_{1,n}\cdot \na \psi_n + \na G*(\rho_2 \na \psi_n))\Delta \psi_n\, dx
-\int_\Omega \Delta h \psi_n\, dx
\end{align*}
and we need to bound the second integral in the right hand side.
First, we recall that the term $\na G*(\rho_2 \na \psi_n))\Delta \psi_n$ should be written as $ \div U  \Delta \psi_n\, dx$ where $U$ solves \eqref{eq:phi0} with $\rho_2 \na \psi_n$ in the right hand side.
We thus have (since $\na \psi_n\cdot n=0$ on $\pa\Omega$ and using Calderon-Zygmund estimates):
\begin{align*} \int_\Omega \na G*(\rho_2 \na \psi_n)\Delta \psi_n\, dx = 
\int_\Omega \na \div U \cdot \na \psi_n \, dx & \leq \| D^2 U \|_{L^2(\Omega)} \|\na \psi_n\|_{L^2(\Omega)}\\
&  \leq C \| \rho_2 \na \psi_n \|_{L^2(\Omega)} \|\na \psi_n\|_{L^2(\Omega)}
\leq C  \|\na \psi_n\|_{L^2(\Omega)}^2
\end{align*}
Next, we can write
$$ \int _\Omega (\na \phi_{1,n}\cdot \na \psi_n )\Delta \psi_n\, dx = \int_\Omega ( -D^2 \phi_{1,n} : \na \psi_n\otimes\na\psi_n+ \Delta\phi_{1,n} \frac{|\na \psi|^2}{2})\, dx.$$
Since $\| D^2 \phi_{1,n}\| \leq \lambda \ln( n)$ (see Lemma \ref{lem:phin}),  we deduce
\begin{align*}
\frac{d}{dt} \int |\na \psi_n|^2\, dx 
& \geq  \int \left(\mu+\frac{B_n}{A_n}\right)|\Delta \psi_n|^2 - C (1+\lambda \ln(n)) \int | \na \psi_n|^2-C.
\end{align*}
Gronwall's inequality, together with the fact that $\int |\na \psi_n|^2 (T)\, dx= 0$ implies
\begin{align*}
\sup_{t\in(0,T)}\int |\na \psi_n(t)|^2\, dx + 
\int_0^T  \int \left(\mu+\frac{B_n}{A_n}\right)|\Delta \psi_n|^2 \, dx\,dt
& \leq \frac{e^{ C (1+\lambda \ln(n))T}-1 }{  (1+\lambda \ln(n))}\leq e^{CT} n^{\lambda T},
\end{align*}
which completes the proof of Lemma \ref{lem:bdpsin}.
\end{proof}

We can now complete the proof of Proposition \ref{prop:unique}:

\begin{proof}[Proof of Proposition \ref{prop:unique}]
First, we fix $T=1/(2\lambda)$. Equation \eqref{eq:psin} yields
\begin{align*}
0
& =\int_0^T \int_\Omega
(\rho_1-\rho_2+p_1-p_2) \Big(
A\pa_t \psi_n + (\mu A+B)\Delta \psi_n +A \na \phi_1\cdot \na \psi_n + A\na G*(\rho_2 \na \psi_n) \Big) \, dx\, dt\\
& =\int_0^T \int_\Omega
(\rho_1-\rho_2+p_1-p_2) \Big(Ah +B\Delta \psi_n -  A \frac{B_n}{A_n}\Delta \psi_n 
+ A (\na \phi_1- \na \phi_{1,n})\cdot \na \psi_n 
\Big) \, dx\, dt
\end{align*}
and so, using \eqref{eq:psi0}, we can write:
\begin{align*}
\int_0^T \int_\Omega
(\rho_1-\rho_2) h
& =\int_0^T \int_\Omega
(\rho_1-\rho_2+p_1-p_2) \Big(A \frac{B_n}{A_n}-B \Big) \Delta \psi_n\, dx\, dt\\
& =\int_0^T \int_\Omega
(\rho_1-\rho_2+p_1-p_2) \frac{B_n}{A_n} \Big(A -A_n \Big) \Delta \psi_n\, dx\, dt \\
& \qquad +\int_0^T \int_\Omega
(\rho_1-\rho_2+p_1-p_2)  \Big(B_n-B \Big) \Delta \psi_n\, dx\, dt \\
& \qquad + \int_0^T \int_\Omega(\rho_1-\rho_2+p_1-p_2)  \Big( A (\na \phi_1- \na \phi_{1,n})\cdot \na \psi_n 
\Big) \, dx\, dt.
\end{align*}
We now show that the three terms in the right hand side go to zero as $n\to\infty$:
First, we have
\begin{align*}
\left| \int_0^T \int_\Omega
(\rho_1-\rho_2+p_1-p_2) \frac{B_n}{A_n} \Big(A -A_n \Big) \Delta \psi_n\, dx\, dt \right|
&  \leq C \left( \int_\Omega \frac{B_n}{A_n} \Big(A -A_n \Big)^2 \, dx\, dt \right)^{1/2}  e^{CT/2} n^{\lambda T/2}\\
&  \leq Cn^{1/2} \left( \int_\Omega   \Big(A -A_n \Big)^2 \, dx\, dt \right)^{1/2}  e^{CT/2} n^{\lambda T/2}\\
&  \leq Cn^{-1/2}  e^{CT/2} n^{\lambda T/2}
\end{align*}
which goes to zero since  $T <1/\lambda $.
Similarly,
\begin{align*}
\left| \int_0^T \int_\Omega
(\rho_1-\rho_2+p_1-p_2)  \Big(B_n-B \Big) \Delta \psi_n\, dx\, dt\right|
& \leq 
\left( \int_0^T \int_\Omega
 \frac{A_n}{B_n} \Big(B_n-B \Big)^2 \, dx\, dt\right)^{1/2}e^{CT/2} n^{\lambda T/2}\\
 \leq Cn^{-1/2}  e^{CT/2} n^{\lambda T/2} \to 0
 \end{align*}
and 
\begin{align*}
\left|
\int_0^T \int_\Omega(\rho_1-\rho_2+p_1-p_2)  \Big( A (\na \phi_1- \na \phi_{1,n})\cdot \na \psi_n 
\Big) \, dx\, dt
\right|
& \leq
\| \na \phi_1- \na \phi_{1,n} \| _{L^2}\| \na \psi_n \|_{L^2}\\
& \leq \gamma n^{-1/2}   e^{CT/2} n^{\lambda T/2} \to 0.
\end{align*}

We have thus showed that 
$\int_0^T \int_\Omega (\rho_1-\rho_2) h\, dx\,dt=0$
for all test function $h$ and thus $\rho_1(x,t)=\rho_2(x,t)$ a.e. for $x\in \Omega$ and $t\in [0, T_0]$ with $T_0=1/(2\lambda)$.
This short time uniqueness result easily yield uniqueness for all time by iteration over the time interval.


\end{proof}

\begin{proof}[Proof of Lemma \ref{lem:phin}]
The proof makes use of the following bounds on the kernel $G$:
\begin{equation}\label{green_bd}
|\pa_{i} G(x,y)| \leq C |x-y|^{-d+1}\, , \qquad |\pa_{ij} G(x,y)| \leq C |x-y|^{-d}\qquad \forall (x,y)\in \Omega\times\Omega, \; x\neq y,
\end{equation}
which  follow from the uniform bounds established in \cite{ChoiKim}. 
\medskip

We define $\rho_n (x) = \int_{\Omega} \eta_n (x-y) \rho(y)\, dy$ where $\eta_n(x) = \frac{n^d}{\omega_n} \chi_{B_{1/n}}(x)$ and $\phi_n(x)= \int_\Omega G(x,y)\rho_n(y)\, dy$.
Since $\rho\leq1$, we have
\begin{align*}
|\rho_n(x)-\rho_n(x') | 
& \leq \int_{\R^d}|  \eta_n (x-y)- \eta_n (x'-y)|\, dy \\
& \leq 
\begin{cases}
C n |x-x'|  & \mbox{ if } |x-x'|\leq 1/n \\
1 & \mbox{ otherwise }
\end{cases}
\end{align*}
and so
\begin{align*}
\frac{|\rho_n(x)-\rho_n(x') | }{|x-x'|^s} 
&\leq 
\begin{cases}
C n |x-x'|^{1-s}  & \mbox{ if } |x-x'|\leq 1/n \\
  |x-x'|^{-s}  & \mbox{ otherwise }
\end{cases} \\
& \leq Cn ^s
\end{align*}

Thus for any $x\in \Omega$ we write
$$  \pa_{ij}\phi_n(x) = \int_\Omega \pa_{ij} G(x,y) \rho_n(y) \, dy =  \int_\Omega \pa_{ij} G(x,y) [\rho_n(y)-\rho_n(x)]\, dy + \int_\Omega \pa_{ij} G(x,y)  \, dy\rho_n(x)$$
where
$  \int_\Omega \pa_{ij} G(x,y)  \, dy = \pa_{ij} v(x)$ with $v$ solution of
$$
\begin{cases}
v- \Delta v = 1 & \mbox{ in } \Omega\\
\alpha v + \beta \na v\cdot n = 0 & \mbox{ on } \pa\Omega.
\end{cases}
$$
Classical Shauder's estimates give $\sup_{x\in\Omega}\pa_{ij} v (x) \leq C$
and so
\begin{align*}
| \pa_{ij}\phi_n(x) |
& = \left| \int_\Omega \pa_{ij} G(x,y) [\rho_n(y)-\rho_n(x)]\, dy\right| + | \pa_{ij} v(x) \rho_n(x)|\\
& \leq C n^s
 \int_\Omega \left|  \pa_{ij} G(x,y)\right| |y-x|^s \, dy + | \pa_{ij} v(x) |\\
 & \leq C n^s
 \int_\Omega |y-x|^{s-d}  \, dy +C  \\
&\leq   \frac{C}{s} n^s + C,
\end{align*}
where the constant $C$ depends on $\Omega$, but not on $s$. It remains to take $s = \frac{1}{\ln n}$ to optimize the right hand side,
which yields 
$$ | \pa_{ij}\phi_n |\leq C \ln n \qquad \hbox{ in }  \Omega.
$$

Next, we consider the function $z_n = \phi-\phi_n$ and prove that
\begin{equation}\label{eq:zL1} 
\| z_n\|_{L^1(\Omega)} \leq \frac C n.
\end{equation}
To get this estimate, we denote $\Omega_{1/n} = \{ x\in \Omega \,;\, \mathrm{dist}(x,\pa\Omega) \geq 1/n\}$ and we write:
\begin{align*}
\phi(x)-\phi_n(x) 
&= \int_\Omega \int_{\Omega_{1/n}} G(x,y) \eta_n(y-z) [\rho(z) -\rho(y)]\, dy\, dz   +  \int_\Omega \int_{\Omega\setminus \Omega_{1/n} } G(x,y) \eta_n(y-z) [\rho(z) -\rho(y)]\, dy\, dz\\
& = I_1(x) + I_2(x)
\end{align*}
Since 
$$|I_2(x)|\leq  \int_\Omega \int_{\Omega\setminus \Omega_{1/n} } G(x,y) \eta_n(y-z)\, dy\, dz  =   \int_{\Omega\setminus \Omega_{1/n} } G(x,y)  \, dy$$
we have
$$\int |I_2(x)|\, dx \leq \int_{\Omega\setminus \Omega_{1/n} } \int_\Omega G(x,y) \, dx \, dy\leq |\Omega\setminus \Omega_{1/n}| \leq \frac Cn.$$

Next, we note that for $y\in\Omega_{1/n}$, we have $\mathrm {Supp } \, \eta_n(y-\cdot) \subset B_{1/n}(y) \subset \Omega$ and so
\begin{align*}
I_1 (x) 
&= \int_\Omega  \int_{\Omega_{1/n}}[G(x,z)-G(x,y)] \eta_n(y-z) \rho(y)\, dy\, dz\\
&= \int_\Omega  \int_{\Omega_{1/n}}\int_0^1 \na G(x,y+t(z-y))\cdot (z-y)  \eta_n(y-z) \rho(y)\, dy\, dz.
\end{align*}
We deduce
\begin{align*}
\int_\Omega |I_1 (x)| \, dx
&\leq \int_{\Omega}  \int_{\Omega_{1/n}} \int_0^1\int_\Omega  |\na G(x,y+t(z-y)\, dx | |z-y| \eta_n(y-z)\, dy\, dz\\
&\leq \int_{\Omega}  \int_\Omega  |z-y| \eta_n(y-z)\, dy\, dz\\
& \leq   C \frac 1 n.
\end{align*}
and \eqref{eq:zL1} follows.

Finally, we remark that $z_n$ solves
$$
\begin{cases}
z_n -\Delta z_n = \rho-\rho_n & \mbox{ in } \Omega\\
  \na z_n \cdot n = 0 & \mbox{ on }\pa \Omega.
\end{cases}
$$
Multiplying by $z_n$ and integrating, we deduce the following bound: 
$$ \int_\Omega  |\na z_n|^2\, dx\leq \int_\Omega  (\rho-\rho_n) z_n \, dx \leq\int_\Omega | z_n| \, dx  \leq \frac C n$$
which gives the first bound in \eqref{eq:phiapprox} and conclude the proof.

\end{proof}

\section{Characteristic functions: Proof of Theorem \ref{thm:charac}}

\begin{proof}[Proof of Theorem \ref{thm:charac}]
Since we now assume that $\mu=0$,  $\rho$ solves (in the sense of distribution)
$$
\pa_t \rho + \div( \rho \na \phi ) = \Delta p .
$$
With the notations of Lemma \ref{lem:pos}, we can also write this equation as (since $\rho=1$ a.e. in $\mathcal P$)
\begin{equation}\label{eq:weak1}
\pa_t \rho + \na \rho\cdot \na \phi = \mu_t - \rho (1-\chi_{\mathcal P(t)})\Delta \phi.
\end{equation}
Lemma \ref{lem:pos} together with equation \eqref{eq:phi0} then implies
$$\pa_t \rho + \na \rho\cdot \na \phi \geq   \rho (1-\chi_{\mathcal P(t)}) (\rho-\sigma \phi) 
$$
and since $\sigma\phi\leq 1$, we get (since $\rho=1$ a.e. in $\mathcal P$)
\begin{equation}\label{eq:mont}
\pa_t \rho + \na \rho\cdot \na \phi 
\geq \rho (1-\chi_{\mathcal P(t)}) (\rho-1) = - \rho  (1-\rho)
\end{equation}
(note that when working with a fixed potential $\phi$ such that $-\Delta \phi\geq 0$, we get
$\pa_t \rho + \na \rho\cdot \na \phi\geq 0$ meaning that $\rho$ is monotone increasing along the characteristic curves associated to the vector field $\na \phi$. The proof is simpler in that case).

Heuristically, the proof now goes as follows:
Given $(x_0,t_0)$, we consider the  characteristic curve
\begin{equation}\label{eq:cc} \dot{X}(t) = \na \phi(X(t),t ), \qquad X(t_0)=x_0.\end{equation}
The density along the characteristic curve, $u(t) = \rho(X(t),t)$ satisfies (thanks to \eqref{eq:mont}):
$$ 
u'(t) \geq - u(t)(1-u(t)) \geq -(1-u(t))$$
which implies 
$$ 1- u(t) \geq (1-u(t_0)) e^{-(t_0-t)} \quad \forall t<t_0.$$ 
In particular, if $u(t_0)<1$, then $u(t)<1$ for all $t<t_0$.

So given $x_0$ such that $\rho(x,t_0) <1$  in a neighborhood of $x_0$ and $X(t)$ solution of \eqref{eq:cc}, 
 \eqref{eq:mont} implies that $\rho(x,t)<1$ for all $t<t_0$ in a neighborhood of $X(t)$.
 It follows that  $p(x,t)=0$ a.e. in that same neighborhood and so $\Delta p =0$ (as a distribution) and going back to \eqref{eq:weak1}, we find
$$ 
\frac{d}{dt}\rho(X(t),t) = -\rho(X(t),t) \Delta \phi(X(t),t) \qquad \forall t\in [0,t_0].$$
Since $\rho(X(0),0)<1$, we must have $\rho(X(0),0)=0$ (this is where we use the fact that $\rho_{in}$ is a characteristic function) and thus $\rho(X(t),t)=0$ for all $t\in [0,t_0]$.

In particular $\rho(X(t_0),t_0)=\rho(x_0,t_0)=0$ (and this holds for all $t_0$ and $x_0$ such that $\rho(x,t_0) <1$  in a neighborhood of $x_0$). The result follows.

\medskip

To make this argument rigorous, we prove the following lemma:
\begin{lemma}\label{lem:w1}
Given $t_0>0$, let $\psi(x,t)$ be the solution of 
$$\pa_t \psi = \na \phi \circ \psi, \qquad \psi (x,t_0)=x.$$
Then 
$$1-\rho(\psi(x,t_1),t_1) \geq(1- \rho(x,t_0)) e^{-(t_0-t_1)} \qquad \forall t_1<t_0.$$
\end{lemma}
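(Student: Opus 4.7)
The strategy is to rigorously implement the informal characteristic computation sketched just above the lemma. Setting $w(x,t):=e^{-t}(1-\rho(x,t))$, I would first derive the distributional subsolution inequality
\[
\partial_t w+\nabla\cdot(w\nabla\phi)\leq 0 \qquad\text{in the sense of distributions on }\Omega\times(0,t_0),
\]
with the natural no-flux condition on $\partial\Omega$. Starting from the weak equation $\partial_t\rho+\nabla\cdot(\rho\nabla\phi)=\Delta p$, one computes $\partial_t(1-\rho)+\nabla\cdot((1-\rho)\nabla\phi)=\Delta\phi-\Delta p$, and combines (a) Lemma~\ref{lem:pos}, which gives $\Delta p\geq \Delta\phi\,\chi_{\mathcal P(t)}$ as distributions; (b) the pointwise identity $\Delta\phi=\phi-\rho\leq 1-\rho$ from $\phi\leq 1$; and (c) the vanishing of $1-\chi_{\mathcal P(t)}$ where $\rho=1$. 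Together these give $\Delta\phi-\Delta p\leq\Delta\phi(1-\chi_{\mathcal P(t)})\leq 1-\rho$, hence $\partial_t(1-\rho)+\nabla\cdot((1-\rho)\nabla\phi)\leq 1-\rho$, and multiplying through by $e^{-t}$ yields the displayed inequality.

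Next I would dualize against the forward continuity equation. For a nonnegative test function $\zeta_0\in C^\infty(\overline\Omega)$, let $\zeta(x,t)$ be the solution on $[t_1,t_0]$ of
\[
\partial_t\zeta+\nabla\cdot(\zeta\nabla\phi)=0, \qquad \zeta(\cdot,t_0)=\zeta_0,
\]
with the corresponding no-flux condition. Because $\nabla\phi\in L^\infty(0,T;W^{1,p}(\Omega))$ for every $p<\infty$ (Calder\'on--Zygmund applied to \eqref{eq:phi0}), the DiPerna--Lions/Ambrosio theory furnishes a unique regular Lagrangian flow $\Phi_{s,t}$ generated by $\nabla\phi$, and the unique solution is $\zeta(\cdot,t)=(\Phi_{t_0,t})_\#\zeta_0$; in particular $\psi(x,t)=\Phi_{t_0,t}(x)$ is precisely the backward flow in the statement. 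Testing the subsolution inequality for $w$ against this nonnegative $\zeta$ on $\Omega\times[t_1,t_0]$ and integrating by parts in both space and time, the velocity-field terms cancel against the continuity equation for $\zeta$, leaving
\[
\int_\Omega \zeta_0(x)\,w(x,t_0)\,dx \;\leq\; \int_\Omega \zeta(x,t_1)\,w(x,t_1)\,dx \;=\; \int_\Omega \zeta_0(y)\,w(\psi(y,t_1),t_1)\,dy,
\]
the final equality being the change of variables along the flow. Since $\zeta_0\geq 0$ is arbitrary, one concludes $w(x,t_0)\leq w(\psi(x,t_1),t_1)$ for a.e.\ $x\in\Omega$, and unwinding $w=e^{-t}(1-\rho)$ gives the lemma.

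The main technical obstacle is the low regularity of $\nabla\phi$: being only H\"older in space, classical characteristics need not be unique, so $\psi$ must be interpreted as the DiPerna--Lions regular Lagrangian flow, which is defined a.e.\ and carries an integrable Jacobian making the change-of-variables step licit. A secondary subtlety is that $w$ and $\zeta$ are not smooth in space-time, so the integration by parts requires a double regularization: mollify $\nabla\phi$ to obtain smooth approximate flows, work with smooth approximants of $\zeta_0$ and of $w$, and pass to the limit using stability of the regular Lagrangian flow under $W^{1,p}$ convergence of the driving field together with the uniform bounds $0\leq w\leq 1$ and $\nabla\phi\in L^\infty_tW^{1,p}_x$. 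Once the subsolution inequality for $w$ is established, the remainder is a clean duality argument, and the lemma follows.
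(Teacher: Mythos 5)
Your argument is correct and mirrors the paper's: both test the equation against $\zeta(\cdot,t)=\psi(\cdot,t)\#\zeta_0$ (the solution of the backward continuity equation), invoke Lemma~\ref{lem:pos} together with the bound $\Delta\phi\leq 1-\rho$, and conclude by a Gronwall inequality followed by the change of variables along the flow. The reformulation through $w=e^{-t}(1-\rho)$ is simply a repackaged Gronwall argument, and your explicit appeal to the DiPerna--Lions regular Lagrangian flow is a useful technical clarification of the well-posedness of $\psi$ that the paper leaves implicit.
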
  

\begin{proof}
Given a test function $\zeta_0(x)\geq 0$, the function
$ \zeta(x,t) = \psi(\cdot,t)\# \zeta_0$ solves
$$ \pa_t \zeta + \div(\na \phi \, \zeta) =0 ,\qquad \zeta(t_0)=\zeta_0.$$
So, taking $\zeta$ as a test function in \eqref{eq:weak1},
we find (in $\mathcal D'(0,T))$):
\begin{align}
\frac{d}{dt} \int_\Omega \rho(x,t) \zeta(x,t)\, dx 
& = \int_\Omega \pa_t \rho(x,t) \zeta(x,t)+ \rho(x,t) \pa_t \zeta(x,t), dx\nonumber  \\
& = \langle \mu_t  , \zeta(\cdot,t)\rangle - \int_\Omega \rho(x,t)  (1-\chi_{\mathcal P (t)}) \Delta \phi\zeta(x,t)\, dx \label{eq:pb}\\
& \geq  \int_\Omega \rho(x,t)  (1-\chi_{\mathcal P(t)}) (\rho-\sigma \phi)\zeta(x,t)\, dx \nonumber \\
& \geq  \int_\Omega \rho(x,t)  (1-\chi_{\mathcal P(t)}) (\rho-1)\zeta(x,t)\, dx \nonumber \\
&\geq - \int_\Omega(1- \rho(x,t) )\zeta(x,t)\, dx\nonumber
\end{align}
since 
$\frac{d}{dt} \int_\Omega \zeta(x,t)\, dx =0$ (by construction of $\zeta$), we deduce
$$\frac{d}{dt} \int_\Omega (1-\rho(x,t)) \zeta(x,t)\, dx \leq  \int_\Omega(1- \rho(x,t) )\zeta(x,t)\, dx$$
which implies (with $t_1<t_0$)
$$
 \int_\Omega (1-\rho(x,t_1)) \zeta(x,t_1)\, dx \geq  \int_\Omega (1-\rho(x,t_0)) \zeta(x,t_0)\, dx e^{-(t_0-t_1)}.
 $$
Since
$$
 \int_\Omega (1- \rho(x,t_1) ) \zeta(x,t_1)\, dx 
\int (1-\rho(x,t_1))\psi(\cdot,t_1)\# \zeta_0 \, dx = \int(1- \rho(\psi(x,t_1),t_1))\zeta_0 (x) \, dx ,
$$
the result follows.
\end{proof}

To complete the proof of the theorem, we consider $B_r(x_0) \subset \{\rho(\cdot,t)<1\}$
and a test function $\zeta_0$ supported in  $B_r(x_0)$. As in the proof above, we define $ \zeta(x,t) = \psi(\cdot,t)\# \zeta_0$.
Lemma \ref{lem:w1} implies that $\rho(x,t)<1 $, and so $p(x,t)=0$,  a.e. in $\psi(B_r(x_0),t )$ for all $t<t_0$.
Since $\supp \zeta (t) \subset \psi(B_r(x_0),t)$ we deduce that
$$  \langle \mu_t, \zeta(t)\rangle =0 \qquad \forall t<t_0.$$

We now go back to \eqref{eq:pb}: If $\zeta\geq 0$ and using the fact that $-\Delta \phi=\rho-\sigma \phi \leq 1$ we get:
$$
\frac{d}{dt} \int_\Omega \rho(x,t) \zeta(x,t)\, dx  \leq   \int_\Omega \rho(x,t) \zeta(x,t)\, dx \quad \mbox{ in } \mathcal D'((0,t_0)).
$$
Since $ \int_\Omega \rho(x,0) \zeta(x,0)\, dx=0$ (when $\rho(0)$ is a characteristic function), we deduce that $ \int_\Omega \rho(x,t_0) \zeta(x,t_0)\, dx$  and thus $\rho(x,t_0)=0$ a.e. in $B_r(x_0)$.

We have thus proved that for all $t>0$, $\rho(x,t)=0$ a.e. in the interior of $\Omega\setminus \{\rho(\cdot,t)=1\}$. The theorem follows (with $\Omega_s(t) = \{\rho(\cdot,t)=1\}$).
\end{proof}



\medskip
\medskip

\appendix

\section{Proof of Proposition  \ref{prop:minchar} (ii)}
We now prove the second part of Proposition  \ref{prop:minchar}, which claims that the minimizers of \eqref{eq:min1} are characteristic functions when $\mu=0$. This remarkable fact implies that the JKO approximation $\rho^{\tau}$ are characteristic functions for all time, regardless of whether the initial data is a characteristic function or not. 
The proof presented below first appeared in \cite{JKM} and requires only minor modifications to our framework. 
We present it here for the  reader's convenience.

\medskip

The proof relies on two remarks:
\begin{enumerate}
\item The functional $J$ is concave.
\item We can rewrite the minimization problem \eqref{eq:min1} in term of the optimal plan $\pi$ instead of the density $\rho$, thus replacing  the nonlinear term $\rho \mapsto W_2^2(\rho,\brho)$ by the linear term $  \pi\mapsto \int_{\Omega\times\Omega} |x-y|^2\, d\pi(x,y)$.
\end{enumerate}
Thanks to these two remarks, we end up with a minimization problem for a concave functional on a convex set. Minimizers must then be extremal points, which, in $K$, are characteristic functions.

\medskip

To make the second point more precise, we denote 
$$
S(\rho)= \frac {1}{2\tau} W_2^2(\rho,\brho) + J(\rho), \qquad \rho \in K
$$
and we introduce
$$ \widetilde S(\pi) = \frac {1}{2\tau} \int_{\Omega\times\Omega} |x-y|^2\, d\pi(x,y)+ \widetilde J(\pi), \qquad \pi \in \widetilde K$$
where
\[
\widetilde J (\pi) =\frac{1}{2\sigma} \int_{\Omega\times \Omega} d\pi(x,y) -\frac 1 2 \int_{\Omega^4} G(y_1,y_2)d\pi(x_2,y_2)d\pi(x_1,y_1) 
\]
and $\widetilde K$ is the set of plans in $\mathcal P(\Omega\times\Omega)$ whose first marginal (denoted $\pi^1$) is $\brho$ and second marginal (denoted $\pi^2$) is in $K$, that is
$$ \int_{\Omega\times\Omega} u(x) d\pi(x,y) = \int_\Omega u(x) d\brho(x), $$
$$ \int_{\Omega\times\Omega} v(y) d\pi(x,y) = \int_\Omega v(y) d\rho(y), \quad \mbox{ for some } \rho \in K.$$

Clearly,
for any $\pi \in \widetilde K$ with  $\pi^2=\rho$, we have $\widetilde S(\pi) \geq S(\rho)$.
Also, given $\rho\in K$, the optimal plan $\pi$ from $\brho$ to $\rho$ is in $\widetilde K$ and satisfies $\widetilde S(\pi) = S(\rho)$. 
We easily deduce that
$$ 
\min \{ S(\rho) \, ;\, \rho\in K\} = \min \{ \widetilde S(\pi) \, ;\, \pi \in \widetilde K\}
$$
and any minimizer of one problem is associated to a minimizer of the other one.

\medskip

To check the concavity of $\widetilde J$, we write
\[
D^2 \widetilde J(\pi^*)(\theta,\theta)=- \int_{\Omega^4}{G (y_1,y_2)d\theta(x_2,y_2)d\theta(x_1,y_1)}=-\int_{\Omega\times \Omega}{G (y_1,y_2)f(y_1)f(y_2)dy_1dy_2}
\]
with $f(y)=\theta^2(y)$. Then
\[
D^2 \widetilde J(\pi^*)(\theta,\theta)=- \int_{\Omega } f(y_1)\psi (y_1)dy_1
\]
with $\psi$ solving \eqref{eq:phi0} with right hand side replaced by $f$. 
As simple integration by part leads to
\begin{align}
D^2 \widetilde J(\pi^*)(\theta,\theta) 
&=- \int_{\Omega } |\psi (y_1)|^2 dy_1- \int_{\Omega } |\na \psi (y_1)|^2 dy_1- \int_{\pa \Omega }\frac{\alpha}{\beta} |\psi (y_1)|^2 \, d\H^{n-1}(y_1) \nonumber  \\
& \leq 0\label{eq:Dconc}
\end{align}
which implies  that $\widetilde J$, and thus $\widetilde S(\pi)$, is concave. 
We note that this last inequality is strict unless $\psi\equiv 0$, which requires $f = \theta^2\equiv 0$.

\medskip

As explained above, the fact that $\widetilde S(\pi)$ is strictly concave implies that it achieves its minimum in $\widetilde K$ at an extremal point, which corresponds to a characteristic function.
To make this point precise, let $\rho$ be a minimizer of \eqref{eq:min1} and $\pi$ the corresponding optimal plan between $\brho$ and $\rho$, which is then  a minimizer of $\widetilde S$ in $\widetilde K$.
We assume that $\rho$ is not a characteristic function, so that
there exists $0<\alpha<1 $ such that the set $\Omega_{\alpha} = \{y \in \Omega, \rho(y)\in (\alpha,1 - \alpha)\}$ has positive measure. 
Next, we partition $\Omega_{\alpha}$ into two sets $E_1,E_2$ of equal measure so that  there exists a measure preserving map $T: E_1 \to E_2$. We now construct an admissible perturbation $\theta$ of $\pi$ as 
\[
\theta=(\brho\otimes (\rho \circ T))\big|_ {\Omega \times E_1}-(\brho \otimes \rho)\big|_{\Omega\times E_2}
\]
i.e. for all $\psi \in C(\Omega \times \Omega)$,
\[
\int_{\Omega\times \Omega}{\psi(x,y)d\theta(x,y)}=\int_{\Omega \times E_1}{\psi(x,y)d\brho(x)d\rho(T(y))}-\int_{\Omega \times E_2}{\psi(x,y)d\brho(x)d\rho(y)}.
\]
\medskip

We can then check that for  $\delta =\min\{\alpha,1-\alpha\}$, we have
\[
\pi\pm t\theta\in \widetilde K \quad \mbox{ for all $t\in[0,\delta)$.}
\]
Indeed, $\pi\pm t \theta$ defines a non-negative measure for $t\in[0,\delta)$ and we have
\[
\int_{\Omega\times\Omega}{u(x)d\theta(x,y)}=\int_{\Omega}{u(x)d\brho(x)\int_{E_1}{d\rho(T(y))}}-\int_{\Omega}{u(x)d\brho(x)\int_{E_2}{d\rho(y)}}=0,
\]
which implies $\theta^1(x)=0$ a.e. $x\in \Omega$ (so the first marginal of $\pi\pm t \theta$ is $\brho$).
Taking $u=1$ above, we also find $\int_{\Omega\times \Omega}{d\theta(x,y)}=0$, which gives
$\int_{\Omega}{d\theta^2(x)}=0$.
In fact, we have $\theta^2(y) = \rho\circ T|_{E_1}(y) - \rho|_{E_2}(y)$ which is not zero.
\medskip


Since $\pi$ is a minimizer of $\widetilde S$ in $\widetilde K$, 
we  must  have $\widetilde S(\pi\pm t\theta)\geq \widetilde S(\pi)$. 
However \eqref{eq:Dconc} and the fact that $\theta^2\neq 0$ implies that 
the function $t\mapsto \widetilde S(\pi+t\theta)$ (which is a quadratic polynomial) is strictly concave on $(-\delta,\delta)$ and cannot have a minimum at $t=0$, a contradiction.

\medskip

We have thus proved that for any minimizer of $S$ in $K$, we have $|\{y \in \Omega\,;\, \rho(y)\in (\alpha,1 - \alpha)\}|=0$ for all $\alpha>0$, that is $\rho(x)\in \{0,1\}$ a.e. in $\Omega$.


\section{Incompressible motion by chemotaxis with projection operator}\label{sec:projection}
In this section, we briefly recall some considerations presented in \cite{MRS} and relevant to this paper.
We consider the evolution of a density function $\rho(x,t)$ representing a population moving in response to the gradient of the potential $\phi$, subject to the incompressibility constraint $\rho(x,t)\leq 1$ (so no diffusion).
Since the continuity equation $\pa_t \rho + \div(\rho \na \phi)=0$ does not preserve the $L^\infty$ norm of $\rho$, 
we can consider the equation
\begin{equation}\label{eq:transport}
\begin{cases}
\pa_t \rho + \div(\rho P_{C(\rho)} (\na \phi))=0 & \mbox{ in } \Omega\times \R^+ \\[4pt]
\rho(x,0)= \rho_0(x) \leq 1 & \mbox{ in } \Omega.
\end{cases}
\end{equation}
where $P_{C(\rho)}$ denotes the projection operator onto the set of admissible velocity fields:
$$ C(\rho) = \left\{ v \in L^2(\Omega)^n\,;\, \int_\Omega v \cdot \na q \, dx \leq 0,\; \forall q\in H^1_\rho(\Omega)\right\} ,\quad H^1_\rho = \left\{ q\in H^1(\Omega) \, ;\, q\geq 0, \; q(1-\rho)=0 \mbox{ a.e.} \right\}
$$

Formally at least, this projection operator guarantees that the density satisfies the compressibility constraint 
since it implies $\div v\geq 0$ in (the interior of) the set $\{\rho=1\}$.
Note however that 
if $\{\rho=1\}$ is a set with empty interior (but positive Lebesgue measure), then we have $C(\rho) =  L^2(\Omega)^n$ and the constraint is not being enforced (see \cite{MRS}). Whether such a thing can happen when the initial condition is nice enough, is an interesting and probably challenging question.

The definition of $C(\rho)$ also impose $v\cdot n \leq 0$ on $\pa\Omega \cap \{\rho=1\}$ (nothing can leave the domain in the saturated region) which is crucial to the preservation of mass.
When $\phi$ satisfies Neumann conditions ($\alpha=0$) $\na \phi\cdot n=0$, the projection $P_{C(\rho)} (\na \phi)$ will satisfy the same conditions. When $\phi$ satisfy Robin conditions or Dirichlet conditions, we have $\na \phi\cdot n<0$ on $\pa\Omega$ and we have to impose $\rho=0$ on $\pa\Omega$ to have conservation of mass.
The condition $\rho v \cdot n =0$ in \eqref{eq:weak} includes both cases.

\medskip
As a reminder, the projection $v=P_{C(\rho)} (\na \phi)$ is uniquely defined since $C(\rho)$ is a convex set, and it is characterized by the following inequality:
\begin{equation}\label{eq:b}
\int_\Omega (v-\na \phi)\cdot (v-\xi) \, dx \leq 0, \qquad\forall \xi \in C(\rho).
\end{equation}

Classically, this projection, or minimization with constraint,  gives rise to a Lagrange multiplier in the form of a pressure term: One can show that there must exist $p\in H^1_\rho$ such that 
$$\na \phi - v= \na p.$$
The velocity $v =  P_{C(\rho)} (\na \phi)$ is in $C(\rho)$ and thus satisfies
\begin{equation} \label{eq:obst1}
\int_\Omega v \cdot \na q \, dx \leq 0,\quad \forall q\in H^1_\rho(\Omega)
\end{equation}
and the pressure satisfies the orthogonality condition (which follows from \eqref{eq:b} by taking $\xi=0$ and $\xi=2v$):
\begin{equation} \label{eq:obst2}
 \int_\Omega v \cdot \na p \, dx = 0.
\end{equation}
Using \eqref{eq:obst1} and \eqref{eq:obst2}, we get that $-\int v\cdot \na (p-q)\, dx \leq 0$ for all $q\in H^1_\rho$.
This implies that
$p$ solves the variational inequality (obstacle problem):
\begin{equation}\label{eq:obstacle}
\begin{cases}
p\in H^1_\rho \\
\displaystyle \int_\Omega[ \na p-\na \phi] \cdot \na (p-q) \leq 0 \, , \qquad \forall q\in H^1_\rho
\end{cases}
\end{equation}
which is our equation \eqref{eq:obst0}.
In fact, we can prove
\begin{proposition}
Assume $\phi \in H^1$ and 
let $p(x)$ be the (unique) solution of the variational inequality  \eqref{eq:obstacle}. Then 
$$ \na \phi - \na p  = P_{C(\rho)} (\na \phi).$$
\end{proposition}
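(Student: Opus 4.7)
The plan is to verify directly that $v := \nabla \phi - \nabla p$ satisfies the characterization \eqref{eq:b} of the projection $P_{C(\rho)}(\nabla\phi)$. This reduces to two checks: (a) $v\in C(\rho)$, and (b) $\int_\Omega (v-\nabla\phi)\cdot(v-\xi)\,dx\le 0$ for every $\xi\in C(\rho)$.

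The key structural observation I will exploit is that $H^1_\rho$ is a convex cone: if $q_1,q_2\in H^1_\rho$ then $q_1+q_2\ge 0$ and $(q_1+q_2)(1-\rho)=0$ a.e., so $q_1+q_2\in H^1_\rho$. In particular, for any $q\in H^1_\rho$, the function $p+q$ is an admissible test function in \eqref{eq:obstacle}. Plugging $q\mapsto p+q$ into the variational inequality yields
$$\int_\Omega(\nabla p-\nabla\phi)\cdot\nabla(-q)\,dx\le 0,$$
i.e.\ $\int_\Omega v\cdot\nabla q\,dx\le 0$ for every $q\in H^1_\rho$, which is precisely $v\in C(\rho)$. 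This handles (a).

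For (b), I first extract the orthogonality $\int_\Omega v\cdot\nabla p\,dx=0$ by substituting $q=0$ and $q=2p$ (both lie in $H^1_\rho$) into \eqref{eq:obstacle}: the two choices give opposite inequalities whose combination forces equality. Using this orthogonality,
$$\int_\Omega(v-\nabla\phi)\cdot(v-\xi)\,dx=-\int_\Omega\nabla p\cdot(v-\xi)\,dx=\int_\Omega\nabla p\cdot\xi\,dx.$$
Since $p\in H^1_\rho$ and $\xi\in C(\rho)$, the definition of $C(\rho)$ gives $\int_\Omega\xi\cdot\nabla p\,dx\le 0$, which is exactly (b).

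There is no substantial obstacle here; the argument is purely algebraic manipulation of the variational inequality once one recognizes that $H^1_\rho$ is a convex cone (so that translates by $p$ preserve admissibility) and recalls that \eqref{eq:b} is both necessary and sufficient for $v$ to be the projection. The uniqueness of the projection then gives $v=P_{C(\rho)}(\nabla\phi)$, and as a byproduct one recovers the orthogonality relation \eqref{eq:obst2} that was already used in deriving \eqref{eq:obstacle} from \eqref{eq:obst1}.
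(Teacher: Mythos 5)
Your argument is correct and follows essentially the same route as the paper's own proof: verify $v=\nabla\phi-\nabla p\in C(\rho)$ by testing \eqref{eq:obstacle} against $p+q$, extract the orthogonality $\int_\Omega v\cdot\nabla p\,dx=0$, and then check the projection characterization \eqref{eq:b} directly. The only cosmetic difference is that you obtain the orthogonality from the pair $q=0$ and $q=2p$, whereas the paper combines $q=0$ with the already-established fact that $v\in C(\rho)$ (and $p\in H^1_\rho$); the two derivations are equivalent.
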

\begin{proof}
Let $v=  \na \phi - \na p$. The variational inequality \eqref{eq:obstacle} gives
\begin{equation}\label{eq:ineq1} 
 - \int_\Omega v \cdot \na (p-q)\, dx \leq 0 \, , \qquad \forall q\in H^1_\rho.
 \end{equation}
For any $q_0\in H^1_\rho$, we have $q=p+q_0\in H^1_\rho$ and so this inequality implies
$$  \int_\Omega v \cdot \na q_0\, dx \leq 0 \, , \qquad \forall q_0\in H^1_\rho$$
that is $v\in C(\rho)$.
In particular we have $ \int_\Omega v \cdot \na p \, dx \leq 0$ and \eqref{eq:ineq1}  with $q=0$ gives $ \int_\Omega v \cdot \na p \, dx \geq 0$ so we must have the complementarity condition
$$ 
 \int_\Omega v \cdot \na p \, dx =0.$$
This complementarity condition can then be used to show that $v$ satisfies \eqref{eq:b}:
$$
\int_\Omega (v-\na \phi)\cdot (v-\xi) \, dx =  
-\int_\Omega \na p \cdot (v-\xi) \, dx 
=\int_\Omega \na p \cdot \xi \, dx 
\leq 0, \qquad\forall \xi \in C(\rho).
$$
by definition of $C(\rho)$ (since $p\in H^1_\rho$).
\end{proof}

\begin{remark}
We note that for $q\in H^1_\rho$ we also have $(1-\rho)\na q =0 $ a.e.
and so the solution of \eqref{eq:obstacle} also satisfies
$$ \int_\Omega \na p \cdot \na (p-q) - \rho \na \phi \cdot\na  (p-q) \leq 0 \, , \qquad \forall q\in H^1_\rho
$$
Proceeding as above, we can then show that $\rho \na \phi - \na p=\rho (\na \phi - \na p)  = P_{C(\rho)} (\rho \na \phi)$ that is
$$ \rho v = P_{C(\rho)} (\rho \na \phi).$$
\end{remark}

\begin{remark}\label{rmk:projk}
When we add diffusion to the transport equation, we need to consider $w=P_{C(\rho)}(- \mu \na  \log \rho + \na \phi )$, which is characterized by the inequality
$$
\int_\Omega (w-(- \mu \na  \log \rho +\na \phi))\cdot (w-\xi) \, dx \leq 0, \qquad\forall \xi \in C(\rho).
$$
Since $- \mu \na  \log \rho\in C(\rho)$ (using the fact that $\log \rho$ is maximum on $\{\rho=1\}$) we can take $\xi =\xi_0- \mu \na  \log \rho$ to get
$$
\int_\Omega (w+\mu \na  \log \rho - \na \phi)\cdot (w+\mu \na  \log \rho -\xi_0) \, dx \leq 0, \qquad\forall \xi _0\in C(\rho).
$$
This implies that $w+\mu \na  \log \rho =v$, that is
$
P_{C(\rho)}(- \mu \na  \log \rho +\na \phi )=  - \mu \na  \log \rho+ P_{C(\rho)}( \na \phi )
$.
\end{remark}


\bibliographystyle{plain}
\bibliography{JKO_chemotaxis}

\end{document}